\PassOptionsToPackage{lowtilde}{url}
\documentclass[11pt]{amsart}
\usepackage{microtype}               


\usepackage[english]{babel}
\usepackage{amsfonts}
\usepackage{amsmath} 
\usepackage{amssymb}
\usepackage{amsthm}
\usepackage{mathtools}
\usepackage{mathrsfs}				    
\usepackage{verbatim}				    
\usepackage[all]{xy}    	          
\SelectTips{cm}{11}                  
\usepackage[T1]{fontenc}        
\usepackage{graphicx}

\usepackage{enumitem}
\setlist[enumerate]{label=(\roman*)}  

\usepackage{tikz}									
\usetikzlibrary{matrix}
\usetikzlibrary{patterns}

\usepackage{hyperref}
\hypersetup{
  colorlinks   = true,          
  urlcolor     = blue,          
  linkcolor    = blue,          
  citecolor   = red             
}
%

\newcommand{\N}{{\mathbb{N}}}

\newcommand{\Q}{{\mathbb{Q}}}
\newcommand{\R}{\mathbb{R}}			
\newcommand{\C}{\mathbb{C}}



\newcommand{\calA}{{\mathcal A}}
\newcommand{\calB}{{\mathcal B}}
\newcommand{\calC}{{\mathcal C}}

\newcommand{\calF}{{\mathcal F}}

\newcommand{\calI}{{\mathcal I}}
\newcommand{\calJ}{{\mathcal J}}

\newcommand{\calL}{{\mathcal L}}
\newcommand{\calM}{{\mathcal M}}

\newcommand{\calO}{{\mathcal O}}

\newcommand{\calU}{{\mathcal U}}
\newcommand{\calV}{{\mathcal V}}
\newcommand{\calW}{{\mathcal W}}

\newcommand{\calY}{{\mathcal Y}}


\newcommand{\frakM}{{\mathfrak M}}

\newcommand{\resx}{\mathscr H(x)}		
\newcommand{\rescompl}[1]{\mathscr H\left(#1\right)}	
\newcommand{\m}[1]{\mathcal M\left(#1\right)}		
\newcommand{\X}{\mathscr X}			
\newcommand{\Y}{\mathscr Y}			

\newcommand{\OO}{\mathcal O}		%
\newcommand{\OOO}{\mathcal O^\circ}		%
\newcommand{\OOOO}{\mathcal O^{\circ\circ}}		%
\newcommand{\T}{{T_\X}}		%
\newcommand{\Txz}{T_{X,Z}}		%
\newcommand{\Xeta}{{\X^*}}		%
\newcommand{\I}{\mathbb R_{>0}}		%
	
\newcommand{\boundaryZ}{\partial^{\mathrm{an}}\Txz}		%

\newcommand{\defi}[1]{\emph{#1}}

\DeclareMathOperator{\Spf}{Spf}
\DeclareMathOperator{\Spec}{Spec}

\DeclareMathOperator{\tors}{tors}
\DeclareMathOperator{\Div}{Div}
\DeclareMathOperator{\forg}{for}

\DeclareMathOperator{\Hom}{Hom}


\DeclareMathOperator{\Sp}{sp}		
\DeclareMathOperator{\Bl}{Bl}		

\DeclareMathOperator{\trdeg}{trdeg}
\DeclareMathOperator{\rank}{rank}

%
%
\newlength{\mylength}
\setlength{\mylength}{21pt}

\newtheoremstyle{plain2}    
  {}            
  {}            
  {\itshape}    
  {}            
  {\bfseries}   
  {.}           
  {5pt plus 1pt minus 1pt}  
  {{\thmnumber{(#2)} \thmname{#1}{\thmnote{ (#3)}}}}          
\theoremstyle{plain2}

\newcounter{thm}[section] 
\newtheorem{thm}[subsection]{Theorem}
\newtheorem{cor}[subsection]{Corollary}
\newtheorem{lem}[subsection]{Lemma}
\newtheorem{prop}[subsection]{Proposition}

\newtheoremstyle{definition2}    
  {}   
  {}   
  {\normalfont}  
  {}       
  {\bfseries} 
  {.}        
  {5pt plus 1pt minus 1pt} 
  {{(\thmnumber{#2}) \thmname{#1}{\thmnote{ (#3)}}}}          
\theoremstyle{definition2}

\newtheorem{rem}[subsection]{Remark}
\newtheorem{ex}[subsection]{Example}

\newcounter{stepcounter}[subsection]     
\newtheoremstyle{stepstyle}
  {2pt}   
  {2pt}   
  {\normalfont}  
  {\parindent}       
  {\itshape} 
  {}         
  {5pt plus 1pt minus 1pt} 
  {{\thmname{#1} \thmnumber{#2}:{\thmnote{ (#3)}}}}          
\theoremstyle{stepstyle}
\newtheorem{step}[stepcounter]{Step}

\newtheoremstyle{point}
  {}   
  {}   
  {\normalfont}  
  {}       
  {\bfseries} 
  {}         
  {5pt plus 1pt minus 1pt} 
  {{\thmname{#1}(\thmnumber{#2})\thmnote{ #3.}}}          
\theoremstyle{point}
\newtheorem{point}[subsection]{}

\newcommand{\pa}[1]{\begin{point}#1\end{point}}              
\newcommand{\Pa}[2]{\begin{point}[#1]#2\end{point}}          

\newtheoremstyle{point*}
  {}   
  {}   
  {\normalfont}  
  {}       
  {\bfseries} 
  {}         
  {5pt plus 1pt minus 1pt} 
  {{\thmname{#1}\thmnote{ #3.}}}          
\theoremstyle{point*}
\newtheorem{point*}[subsubsection]{}

\numberwithin{equation}{subsection}

\newtheoremstyle{subpoint}
  {}   
  {}            
  {\normalfont}  
  {}                   
  {\normalfont} 
  {}         
  {5pt plus 1pt minus 1pt} 
  {{\thmname{#1}(\thmnumber{#2})\thmnote{ #3.}}}          
\theoremstyle{subpoint}
\newtheorem{subpoint}[equation]{}


\makeatletter
\renewenvironment{proof}[1][\proofname]{\par
  \vspace{-\topsep}
  \pushQED{\qed}%
  \normalfont
  \topsep0pt \partopsep0pt 
  \trivlist
  \item[\hskip\labelsep
        \itshape
    #1\@addpunct{.}]\ignorespaces
}{%
  \popQED\endtrivlist\@endpefalse
  \addvspace{6pt plus 6pt} 
}
\makeatother

\usepackage{color}

\title{Normalized Berkovich spaces and surface singularities}
\author{Lorenzo Fantini}
\date{\today}
\address{Institut Math\'ematique de Jussieu, Universit\'e Pierre et Marie Curie, Paris, France}
\email{\href{mailto:lorenzo.fantini@imj-prg.fr}{lorenzo.fantini@imj-prg.fr}}
\urladdr{\url{https://webusers.imj-prg.fr/~lorenzo.fantini}}%

\begin{document}

\begin{abstract}
We define normalized versions of Berkovich spaces over a trivially valued field $k$, obtained as quotients by the action of $\I$ defined by rescaling semivaluations.
We associate such a normalized space to any special formal $k$-scheme and prove an analogue of Raynaud's theorem, characterizing categorically the spaces obtained in this way.
This construction yields a locally ringed $G$-topological space, which we prove to be $G$-locally isomorphic to a Berkovich space over the field $k((t))$ with a $t$-adic valuation. 
These spaces can be interpreted as non-archimedean models for the links of the singularities of $k$-varieties, and allow to study the birational geometry of $k$-varieties using techniques of non-archimedean geometry available only when working over a field with non-trivial valuation.
In particular, we prove that the structure of the normalized non-archimedean links of surface singularities over an algebraically closed field $k$ is analogous to the structure of non-archimedean analytic curves over $k((t))$, and deduce characterizations of the essential and of the log essential valuations, i.e. those valuations whose center on every resolution (respectively log resolution) of the given surface is a divisor.
\end{abstract}

\maketitle

\section{Introduction}

%
%

Berkovich's geometry is an approach to non-archimedean analytic geometry developed in the late nineteen-eighties and early nineteen-nineties by Berkovich in \cite{Ber90} and \cite{Ber93}.
To overcome the problems given by the fact that the metric topology of any valued field is totally disconnected, Berkovich adds many points to the usual points of a variety $X$ (not unlike what happens in algebraic geometry with generic points), to obtain an \emph{analytic space} $X^{\mathrm{an}}$, which is a locally ringed space with very nice topological properties and whose points can be seen as \emph{real semivaluations}.

One important feature of Berkovich's theory is that it works also over a trivially valued base field, for example $\C$. 
This gives rise to objects that are far from being trivial, resembling some spaces studied in valuation theory, but carrying in addition an analytic structure, and containing a lot of information about the singularities of $X$.
For example, Thuillier \cite{Thu07} obtained the following result (generalizing a theorem by Stepanov): if $X$ is a variety over a perfect field $k$, then the homotopy type of the dual intersection complex of the exceptional divisor of a log resolution of $X$ does not depend on the choice of the log resolution. 
To prove this, he associates to a subvariety $Z$ of a $k$-variety $X$ a $k$-analytic space that can be called the \emph{punctured tubular neighborhood} of $Z^{\mathrm{an}}$ in $X^{\mathrm{an}}$.
It is a subspace of $X^{\mathrm{an}}$, invariant under modifications of the pair $(X,Z)$, consisting of all the semivaluations on $X$ that have center on $Z$ but are not semivaluations on $Z$. 

In this paper we define a normalized version $\Txz$ of this punctured tubular neighborhood, by taking the quotient of the latter by the group action of $\I$ that corresponds to rescaling semivaluations. 
The space $\Txz$ can be thought of as a \emph{non-archimedean model} of the \emph{link} of $Z$ in $X$. 
It is a locally ringed space in $k$-algebras, endowed with the pushforward of the Grothendieck topology and structure sheaf from the punctured tubular neighborhood, and it can be seen as a wide generalization of Favre and Jonsson's \emph{valuative tree}, an object that has important applications to the dynamics of complex polynomials in two variables.
Indeed, the valuative tree is homeomorphic to the topological space underlying $T_{\mathbb A^2_\C,\{0\}}$, but the latter has much more structure. 
Moreover, $\Txz$ can also be thought of as a compactification of the normalized valuation space considered in \cite{JonssonMustata12} and \cite{BoucksomdeFernexFavreUrbinati13}, as the latter is homeomorphic to the subset of $\Txz$ consisting of all the points that are actual valuations on the function field of $X$.
Valuation spaces homeomorphic to $T_{\mathbb A^n_\C,\{0\}}$ appear also in \cite{BoucksomFavreJonsson08} ; there the authors develop the basics of pluripotential theory on those spaces, building on previous work of Favre and Jonsson in dimension 2.

More generally, we associate a \emph{normalized space} $\T$ to any \emph{special formal $k$-scheme} $\X$.
If $X$ is a $k$-variety and $Z$ is a closed subvariety of $X$, then the formal completion $\widehat{X/Z}$ of $X$ along $Z$ is a special formal $k$-scheme, and we have $T_{\widehat{X/Z}}\cong\Txz$.

The crucial property of $\T$ is the following: while not an analytic space itself, as a locally ringed $G$-topological space in $k$-algebras the normalized space $\T$ is $G$-locally isomorphic to an analytic space over the field $k((t))$ with a $t$-adic absolute value.
Attention should be paid to the fact that these local isomorphisms are not canonical, and in general they do not induce a global $k((t))$-analytic structure on $\T$.
In particular, this result explains why the valuative tree looks so much like a Berkovich curve defined over $\C((t))$.
This interpretation permits to study $\T$, and thus deduce information about $\X$, with various tools of non-archimedean analytic geometry, including the ones that work only over non-trivially valued fields.
We have only recently learned about the article \cite{Ben-BassatTemkin2013}. There the authors use the punctured tubular neighborhood of $Z^{\mathrm{an}}$ in $X^{\mathrm{an}}$ to encode the descent data necessary to glue a coherent sheaf on a formal neighborhood of $Z$ to a coherent sheaf on $X\setminus Z$.
The result we just described on the structure of $\T$ is conceptually similar to the results of Sections 4.2 to 4.6 of {\it loc. cit.}.

We define an affinoid domain of $\T$ as a $G$-admissible subspace $V$ of $\T$ that is isomorphic to a strict $k((t))$-affinoid space, and we show that this definition does not depend on the choice of a $k((t))$-analytic structure on $V$.
This is done by showing, following \cite{Liu90}, that a reduced $k((t))$-analytic space is strictly affinoid if and only if it is Stein, compact and its ring of analytic functions bounded by one is a special $k$-algebra.
Every normalized space $\T$ is compact and $G$-covered by finitely many affinoid domains, and this allows us to characterize the category of all the locally ringed $G$-topological spaces of the form $\T$ as the localization of the category of special formal $k$-schemes by the class of admissible formal blowups.
This is a ``normalized spaces version'' of a classical theorem of Raynaud for non-archimedean analytic spaces (see \cite[4.1]{BosLut85}).

We then apply normalized spaces to the study of surface singularities.
While the importance of valuations in the study of resolutions of surface singularities was emphasized already in the work of Zariski and Abhyankar (see \cite{Zariski39} and \cite{Abhyankar56}), in our work also the additional structure given by the sheaf of analytic functions plays an important role.
If $k$ is an algebraically closed field, $X$ is a $k$-surface and $Z$ is a subspace of $X$ containing its singular locus, by the structure theorem discussed above the normalized space $\Txz$ behaves like a non-archimedean analytic curve over $k((t))$. 
The theory of such curves is well understood, thanks to work of Bosch and L\"utkebohmert \cite{BosLut85} (\cite[Chapter 4]{Ber90} for Berkovich spaces).
In particular, there is a correspondence between (semistable) models and (semistable) vertex sets (see \cite{Duc}, \cite[Chapter 6]{Temkin15} and \cite{BakerPayneRabinoff14}).
We prove an analogue of this result for the normalized space $\Txz$.
After showing how to construct formal log modifications of the pair $(X,Z)$ with prescribed exceptional divisors, we characterize among those modifications the ones that correspond to a log resolution of $(X,Z)$ by performing a careful study (analogous to \cite[2.2 and 2.3]{BosLut85} and \cite[4.3.1]{Ber90}) of the fibers of the map sending every semivaluation to its center on the modification.
Our main source of inspiration in developing this approach was Ducros's work \cite{Duc}. 

The strategy described above leads to two characterizations in terms of the local structure of $\Txz$ of the \defi{essential} and \defi{log essential valuations} on $(X,Z)$, i.e. those valuations whose center on every resolution (respectively log resolution) of $(X,Z)$ is a divisor.
Whenever $k$ is the field of complex numbers and $Z$ is the singular locus of $X$, this is related to a famous conjecture of Nash from the nineteen-seventies (but published only in 1995 in \cite{Nash95}) involving the \emph{arc space} $X_\infty$ of a complex variety $X$.
Nash constructed an injective map from the set of irreducible components of the subspace of $X_\infty$ consisting of the arcs centered in the singular locus of $X$ to the set of essential valuations on $X$, and asked whether this map is surjective.
While this is known to be false if $\mathrm{dim}(X)\geq3$ (see \cite{deFernex}), for complex surfaces a proof was given by Fern\'andez de Bobadilla and Pe Pereira in \cite{deBobadillaPereira12}.
More recently de Fernex and Docampo \cite{deFernexDocampo14} proved that in arbitrary dimension every valuation that is terminal with respect to the minimal model program over $X$ is in the image of the Nash map, deducing a new proof of de Bobadilla--Pereira's theorem.
The class of log essential valuations can be larger than the set of Nash's essential valuations, since in some cases the exceptional locus of the minimal resolution of $X$ may not be a divisor with normal crossings.
However, for many classes of singularities (e.g. rational singularities) these two notions coincide.


We now give a short overview of the content of the paper. 
In Section~\ref{section_preliminaries} we recall some basic constructions of the theories of formal schemes and Berkovich spaces.
In Section~\ref{section_3.1} we define the normalized space of a special formal $k$-scheme, while in Section~\ref{section_3.2} we prove the structure theorem of normalized spaces and deduce some interesting consequences.
In Section~\ref{section_3.3} we define affinoid domains in a normalized space, and show that this notion is independent of the choice of a $k((t))$-analytic structure.
This leads to the normalized version of Raynaud's theorem in Section~\ref{section_3.4}.
We then move to the study of pairs $(X,Z)$, where $X$ is a $k$-surface and $Z$ is a closed subvariety of $X$ containing its singular locus.
Section~\ref{section_4.1} contains the correspondence theorem between formal modifications of $(X,Z)$ and vertex sets.
In Section~\ref{section_4.2} we study discs and annuli in $\Txz$; they are used in Section~\ref{section_4.3}, where we describe the formal fibers of the specialization map.
In Section~\ref{section_4.4} we show how these techniques lead to the characterization of log essential and essential valuations.

Several examples have been given for the reader who might want to quickly reach a basic understanding of the applications of normalized spaces to the study of surface singularities, without spending much time learning formal and Berkovich geometry.
This reader should pay attention to the examples \ref{analytification_functor}, \ref{R: interpretation algebraic case}, \ref{example_explicit_normalization}, \ref{analytic_structure_valuative_tree}, and might benefit from reading the short note \cite{Fantini14}, where some of the results of this paper were announced.

\subsection*{Acknowledgments} 
The results of this paper were part of my PhD thesis at KU Leuven.
I am very thankful to my advisor, Johannes Nicaise.
I am also grateful to Nero Budur, Antoine Ducros, Charles Favre, Mircea Musta{\c{t}}{\u{a}}, Sam Payne, C\'edric P\'epin, Michael Temkin, Amaury Thuillier, and Wim Veys, for helpful discussions and comments.
I am extremely grateful to an anonymous referee for his/her thorough reading of the manuscript and numerous helpful comments.
I acknowledge the support of the Fund for Scientific Research - Flanders (grant G.0415.10) and of the European Research Council (Starting Grant project ``Nonarcomp'' no.307856).


\vspace{2pt}\section{Special formal schemes and their Berkovich spaces}\label{section_preliminaries}

In the section we recall the notions of special formal schemes and the associated Berkovich spaces.
For a detailed study of noetherian formal schemes we refer the reader to \cite{Illusie} or \cite{Bosch14}; a quick introduction is \cite{Nicaise08}.
Special formal schemes are treated for example in \cite{deJ95} and \cite{Ber96}.

\pa{
Let $R$ be a complete discrete valuation ring, $K$ the fraction field of $R$ and $k$ its residue field. 
By definition we allow $R$ to be a trivially valued field $k$.
A \defi{formal $R$-scheme} is a noetherian formal scheme endowed with a (not necessarily adic) morphism of noetherian formal schemes to $\Spf R$.
Recall that a morphism of noetherian formal schemes $f:\Y\to\X$ is said to be \defi{adic} if $f^*(\calJ)\calO_\Y$ is an ideal of definition of $\Y$ for some (and thus for all) ideal of definition $\calJ$ of $\X$.
}

\pa{
A topological $R$-algebra $A$ is said to be a \defi{special $R$-algebra} if it is a noetherian adic ring and the quotient $A/J$ is a finitely generated $R$-algebra for some ideal of definition $J$ of $A$.
A formal $R$-scheme $\X$ is said to be a \defi{special formal $R$-scheme} if it is separated and locally isomorphic to the formal spectrum of a special $R$-algebra.
In particular the reduction $\X_0$ of $\X$ is a reduced and separated scheme locally of finite type over $k$.
Observe that $\X_0$ is generally different the special fiber of $\X$, which is by definition the special formal $k$-scheme $\X_s=\X\times_R k$.
}

\pa{
By \cite[1.2]{Ber96}, special $R$-algebras are exactly the adic $R$-algebras of the form 
\[
R\{X_1,\ldots,X_n\}\lbrack\lbrack Y_1,\ldots,Y_m \rbrack\rbrack/I\cong R[[Y_1,\ldots,Y_m]]\{X_1,\ldots,X_n\}/I,
\]
with ideal of definition generated by an ideal of definition of $R$ and by the $Y_i$'s. 
Recall that if $A$ is a $I$-adic topological ring, then $A\{X_1,\ldots,X_n\} \coloneqq \varprojlim_{\ell\geq1}\big(A/I^\ell\big)[X_1,\ldots,X_n]$ is the algebra of convergent power series over $A$ in the variables $(X_1,\ldots,X_n)$.
Since every $R$-algebra topologically of finite type is special (we can take $m=0$ above), every formal $R$-scheme of finite type is a special formal $R$-scheme.
On the other hand, a special formal $R$-scheme is of finite type if and only if its structure morphism to $\Spf(R)$ is adic.
}

\begin{ex}
When working over a trivially valued field $k$, we have an isomorphism of $k$-algebras $k\{X\}\lbrack\lbrack Y \rbrack\rbrack \cong k \lbrack X\rbrack \lbrack\lbrack Y \rbrack\rbrack$.
The latter is not isomorphic to $k[[Y]][X]$ but to the bigger $k$-algebra $k[[Y]]\{X\}$, which consists of the $Y$-adically convergent power series in $X$ with coefficients in $k[[Y]]$.
\end{ex}

\Pa{Example: the algebraic case}{\label{example_algebraic}
If $X$ is a separated $R$-scheme locally of finite type and $Z$ is a subscheme of the special fiber $X\otimes_R k$ of $X$, then the formal completion $\X=\widehat{X/Z}$ of $X$ along $Z$ is a special formal $R$-scheme. 
In this case, $\X_0=Z_{\mathrm{red}}$. 
For example, if $X= \mathbb A^2_R =\Spec\big(R[X_1,X_2]\big)$ and $Z$ is the origin of the special fiber of $X$, then $\widehat{X/Z}\cong\Spf\big(R\lbrack\lbrack X_1,X_2\rbrack\rbrack\big)$; the special fiber of $\widehat{X/Z}$ is $\Spf k[[X_1,X_2]]$ and its reduction is $\Spec k$.
Similarly, the formal completion of a special formal $k$-scheme along a closed subscheme of its special fiber is again a special formal $k$-scheme.
}

\pa{
All special $R$-algebras are excellent.
This follows from \cite[7]{Valabrega75} when the characteristic of $K$ is positive and from \cite[9]{Valabrega76} when it is zero.
A special formal $R$-scheme $\X$ is said to be \defi{normal} if it can be covered by affine subschemes $\Spf(A)$ with $A$ normal. Since the rings $A$ are excellent, this is equivalent to the normality of all completed local rings of $\X$. 
}

\pa{
If $A$ is a special $k$-algebra and $T$ is an element of an ideal of definition of $A$, then $T$ is topologically nilpotent and therefore it induces a homomorphism $k[[t]]\to A$ that canonically makes $A$ into a special $k[[t]]$-algebra.
Conversely, any special $k[[t]]$-algebra is canonically a special $k$-algebra.
We will sometimes denote a special formal $k[[t]]$-scheme by $\X_t$; and $\X$ will then denote $\X_t$ seen as a special formal $k$-scheme.
}

\pa{
Let $\X$ be a noetherian formal scheme with largest ideal of definition $\calJ$ and let $\calI$ be a coherent ideal sheaf on $\X$. 
The \defi{formal blowup of $\X$ along $\calI$} is the $\X$-formal scheme
$$
\X':=\lim_{\stackrel{\longrightarrow}{n\geq
1}}\mathrm{Proj}\left(\oplus_{d=0}^{\infty}\mathcal{I}^d\otimes_{\mathcal{O}_{\X}}(\mathcal{O}_{\X}/\mathcal{J}^n)\right).
$$
We call the closed formal subscheme of $\X$ defined by $\calI$ the \defi{center} of the blowup. 
The formal blowup $\X'\to\X$ of $\X$ along $\calI$ is characterized by the following universal property (see \cite[8.2.9]{Bosch14}): $\X'$ is a noetherian formal scheme such that the ideal $f^{-1}\calI\OO_{\X'}$ is invertible on $\X'$, and every morphism of noetherian formal schemes $\Y\to\X$ such that $f^{-1}\calI\OO_{\Y}$ is invertible on $\Y$ factors uniquely through a morphism of noetherian formal schemes $\Y\to\X'$.
We say that the blowup $\X'\to\X$ of $\X$ along $\calI$ is \defi{admissible} if the ideal $\calI$ is $\calJ$-open, i.e. contains a power of $\calJ$. 
}

\begin{ex}
Let $X$ be a noetherian scheme, let $Z$ be a closed subscheme of $X$ defined by a coherent ideal sheaf $\calJ$ and denote by $\X=\widehat{X/Z}$ the formal completion of $X$ along $Z$. 
Let $\calI$ be a $\calJ$-open coherent ideal sheaf on $X$, and $\hat\calI$ the induced ideal sheaf on $\X$.
Then the formal blowup of $\X$ along $\hat\calI$ is isomorphic to the formal completion along $f^*\calJ\OO_{\Bl_\calI(X)}$ of the blowup $\Bl_\calI(X)$ of $X$ along $\calI$, where $f:\Bl_\calI(X)\to X$ is the blowup.
This is \cite[2.16.(5)]{Nicaise09}.
\end{ex}

\pa{\label{basic properties blowup} Admissible formal blowups share many properties with blowups of ordinary schemes. In particular, the following facts are proved as for schemes:
\begin{enumerate}[ref=\ref{basic properties blowup}.\roman{enumi}]
\item \label{blowup: composition} a composition of admissible blowups is an admissible blowup (\cite[8.2.11]{Bosch14});
\item \label{blowup: domination} two admissible blowups can be dominated by a third one (\cite[8.2.16]{Bosch14} and the previous point);
\item \label{blowup: extension} an admissible blowup of an open formal subscheme of $\X$ can be extended to an admissible blowup of $\X$ (\cite[8.2.13]{Bosch14}).
\end{enumerate}
}

\pa{
An admissible blowup of a special formal $R$-scheme is a special formal $R$-scheme by~\cite[2.17]{Nicaise09}. Similarly, an admissible blowup of a formal $R$-scheme of finite type is of finite type.
}

\Pa{Berkovich theory}{
Berkovich's approach to non-archimedean analytic geometry was developed in \cite{Ber90} and \cite{Ber93}; a good introduction to the theory is \cite{Temkin15}.
Since the general definition of a $K$-analytic space is quite technical, we will content ourselves with listing some properties of $K$-analytic spaces and introducing via examples those spaces that appear in the rest of the paper.
In particular, we will recall how to associate a $K$-analytic space $\X^\beth$ to a special formal $R$-scheme $\X$ and define the specialization map.
This construction was introduced for rigid spaces by Berthelot in \cite{Berthelot} (see also \cite[\S7]{deJ95} for a detailed exposition), while in the context of Berkovich spaces it was studied in \cite{Berkovich94} and \cite{Ber96}. 
If $\X$ is special over a trivially valued field $k$, we will also study a subspace of $\X^\beth$, introduced by Thuillier in \cite{Thu07}, that behaves more like a generic fiber for $\X$ (see also \cite{Ben-BassatTemkin2013}).
}

\pa{
A \emph{$K$-analytic space} is a locally compact and locally path connected topological space $X$ with the following additional structures:
\begin{enumerate}
\item For every point $x$ of $X$, a completed valued field extension $\resx$ of $K$, called the \defi{completed residue field} of $X$ at $x$.
\item A $G$-topology on $X$, whose $G$-admissible subspaces are called \defi{analytic domains} of $X$.
\item A local $G$-sheaf in $K$-algebras $\calO_X$ on $X$, the \defi{sheaf of analytic functions}.
\end{enumerate}
A $G$-topology is a simple kind of Grothendieck topology, we refer to \cite[\S9.1]{BGR} for the definitions.
The $G$-topology is finer than the usual topology of $X$, i.e. every open subset of $X$ is an analytic domain and every open cover of an analytic domain is a $G$-cover.
If $V$ is an analytic domain of $X$, $x\in V$, $f\in\calO_X(V)$, then $f$ can be evaluated in $x$, yielding an element $f(x)$ of $\resx$.
Therefore, also $|f(x)|\in\R_+$ makes sense.
We refer to \cite{Temkin15} for the general definition of the category $(An_K)$.
}

\begin{ex}\label{analytification_functor}
A fundamental example of $K$-analytic space is the analytification $X^{\mathrm{an}}$ of a $K$-scheme of finite type $X$.
As a topological space,
\[
X^{\mathrm{an}}=\big\{(\xi_x,|\cdot|_x)\big|\xi_x\in X, |\cdot|_x\text{ abs. value on }\kappa(\xi_x)\text{ extending the one of }K\big\},
\]
with the weakest topology such that the map $\rho\colon X^{\mathrm{an}}\to X$ sending a point $x=(\xi_x,|\cdot|_x)$ to $\xi_x$ is continuous, and for each open $U$ of $X$ and each element $f$ of $\mathcal O_X(U)$ the induced map $\rho^{-1}(U)\to\R$ sending $x$ to $|f(x)|=|f|_x$ is continuous. 
The field $\rescompl{x}$ is the completion of $\kappa(\xi_x)$ with respect to $|\cdot|_x$.
A morphism of $K$-schemes of finite type $Y\to X$ induces a map $Y^{\mathrm{an}}\to X^{\mathrm{an}}$.
The space $X^{\mathrm{an}}$ is connected (respectively Hausdorff, compact) if and only if $X$ is connected (respectively separated, proper).
Moreover, whenever $X$ is proper then GAGA type theorems hold (see \cite[\S3.4, \S3.5]{Ber90}).
The sheaf $\mathcal O_{X^{\mathrm{an}}}$ can be thought of as a completion of the sheaf $\mathcal O_{X}$ with respect to some seminorm.
For example, the analytic functions on an open $U$ of $\mathbb A^{n,\mathrm{an}}_K$ are the maps $f:U\to\coprod_{x\in U}\rescompl{x}$ that are locally uniform limits of rational functions without poles.
More generally, every locally closed subspace of $X^{\mathrm{an}}$ can be canonically given the structure of a reduced $K$-analytic space.
\end{ex}

\pa{
The $G$-topology of a $K$-analytic space $X$ is constructed from an important class of distinguished compact analytic domains of $X$, that of \defi{affinoid domains}.
Recall that an affinoid $K$-algebra is a quotient of a Banach $K$-algebra of the form $K\big\{r_1^{-1}T_1,\ldots,r_n^{-1}T_n\big\}=
\big\{\sum_{\underline i\in\N^n} a_{\underline i}\underline T^{\underline i} \,\big|\, a_{\underline i}\in K, \, \lim_{|\underline i|\to\infty}|a_{\underline i}|\underline r^{\underline i}=0\big\}$ (where $r_i>0$, and the Banach norm is given by $||\sum a_{\underline i}\underline T^{\underline i}||=\max{|a_{\underline i}|\underline r^{\underline i}}$), and that the affinoid spectrum $\m{\calA}$ of $\calA$ is the set of bounded multiplicative seminorms on $\calA$, with the topology of pointwise convergence.
Affinoid domains are then some distinguished subsets of $X$ homeomorpic to the affinoid spectrum $\m{\calA}$ of an affinoid $K$-algebra $\calA$, and an affinoid domain is said to be strict if we can take all $r_i$ above to be equal to 1.
If $V\cong\m{\calA}$ is an affinoid domain of $X$, then $\calO_X(V)\cong\calA$.
A subset $U$ of $X$ is then an analytic domain if and only of for every element $u$ of $U$ there exist finitely many affinoid domains $U_1,\ldots,U_n$ of $X$ contained in $U$ and such that $u\in\cap_iU_i$ and $\cup_iU_i$ is a neighborhood of $u$ in $U$.
In particular, any analytic domain of $X$ is $G$-covered by the affinoid domains it contains.
}

\begin{ex}\label{example_berkovich_affinoid}
The \defi{analytic affine $n$-space} $\mathbb A^{n,\mathrm{an}}_K=\Spec(K[T_1,\ldots,T_n])^{\mathrm{an}}$ can be written as the union of the closed polydiscs $D^n(\underline r)=\{|T_i|\leq r_i\text{ for all }i\}$ of center $0$ and polyradius $r=(r_1,\ldots,r_n)\in(\R_+^*)^n$. The polydisc $D^n(\underline r)$ is an affinoid domain, with associated affinoid $K$-algebra $K\big\{r_1^{-1}T_1,\ldots,r_n^{-1}T_n\big\}$.
Explicit descriptions of the topological spaces underlying $\mathbb A^{1,\mathrm{an}}_K$, $\mathbb A^{1,\mathrm{an}}_k$ and $\mathbb A^{2,\mathrm{an}}_k$ can be found in \cite{Payne15}.
\end{ex}

We will now see how to associate a $K$-analytic space to a special formal $R$-scheme.

\pa{\label{definition_beth_space}
If $\X$ is an affine special formal $R$-scheme of the form
$$\X=\Spf\left(\frac{R\{X_1,\ldots,X_n\}\lbrack\lbrack Y_1,\ldots,Y_m\rbrack\rbrack}{(f_1,\ldots,f_r)}\right),$$
then the associated Berkovich space is
$$\X^\beth = V(f_1,\ldots,f_r)\subset D^n\times_K D^{m}_-\subset\mathbb A^{n+m,\mathrm{an}}_K,$$
where $D^n=D^n(\underline 1)$ is the $n$-dimensional closed unit disc in $\mathbb A^{n,\mathrm{an}}_K$ (as in Example~\ref{example_berkovich_affinoid}), $D^{m}_-=\big\{x\in\mathbb A^{m,\mathrm{an}}_K\;\big|\;|T_i(x)|<1\text{ for all }i\big\}$ is the $m$-dimensional open unit disc in $\mathbb A^{m,\mathrm{an}}_K$ and $V(f_1,\ldots,f_r)$ denotes the zero locus of the $f_i$. 
This construction is functorial, sending an open immersion to an embedding of a closed subdomain, therefore it globalizes to general special formal $R$-schemes by gluing.
If $\X$ is of finite type over $R$, this construction coincides with the one by Raynaud (see \cite{Raynaud74} or \cite[\S4]{BosLut93}) and $\X^\beth$ is compact.
}

\begin{ex}
If $\X=\Spf\big(R\{T\}\big)$, then $\X^\beth$ is the closed unit disc in $\mathbb A^{1,\mathrm{an}}_K$. 
If $\X=\Spf\big(R[[T]]\big)$, then $\X^\beth$ is the open unit disc in $\mathbb A^{1,\mathrm{an}}_K$.
Note that if $K=k$ is trivially valued, the latter is homeomorphic to the interval $\left[0,1\right[$.
\end{ex}

\pa{\label{example_affine_beth_space}
If $\X=\Spf\big(R\{X_1,\ldots,X_n\}\lbrack\lbrack Y_1,\ldots,Y_m\rbrack\rbrack/(f_1,\ldots,f_r)\big)$ is an affine special formal $R$-scheme, then its associated Berkovich space $\X^\beth$ is the increasing union $\X^\beth=\bigcup_{0<\varepsilon<1}W_\varepsilon$, where $W_\varepsilon$ is the subspace of $\X^\beth$ cut out by $|Y_i|\leq1-\varepsilon$.
Moreover, $W_\varepsilon$ is an affinoid domain of $\X^\beth$, with associated affinoid \mbox{$K$-algebra} ${K\big\{X_1,\ldots,X_n,(1-\varepsilon)^{-1}Y_1,\ldots,(1-\varepsilon)^{-1}Y_m\big\}}/{(f_1,\ldots,f_r)}.$
}

\pa{\label{canonical_injection_special_algebras}
Let $A$ be a special $R$-algebra and set $X=(\Spf A)^\beth$.
Then the canonical homomorphism $A \otimes_R K \to \calO_X(X)$ is injective.
Indeed, let $f$ be an element of $A \otimes_R K$ which vanishes in $\calO_X(X)$, and let $\frakM$ be a maximal ideal 
of $A \otimes_R K$.
By \cite[Lemma~7.1.9]{deJ95} $\frakM$ corresponds to a point $x$ of $X$, and the image $f(x)$ of $f$ in the completed local ring of $X$ at $x$ coincides with the 
image $\alpha(f_\frakM)$ via the completion morphism $\alpha\colon (A \otimes_R K)_\frakM \to (A \otimes_R K)_\frakM^{\phantom{\frakM}\wedge}$ of the image $f_\frakM$ of $f$ in the localization of $A \otimes_R K$ at $\frakM$.
It follows that $\alpha(f_\frakM)=0$, hence $f_\frakM=0$ because $(A \otimes_R K)_\frakM$, being a localization of the commutative noetherian ring $A$, is a local noetherian ring.
Since this is true for every maximal ideal of $A \otimes_R K$, it follows that $f=0$.
}

\pa{\label{definition_specialization_map}
There is a natural \defi{specialization map} $\Sp_\X:\X^\beth\longrightarrow\X_0$ that is defined as follows.
If $\X=\Spf(A)$ is affine, a point of $\X^\beth$ gives rise to a continuous character $\chi_x\colon A\to\rescompl{x}^\circ$, where we have denoted by $\rescompl{x}^\circ$ the valuation ring of $\rescompl{x}$, which in turn gives rise to a character $\widetilde\chi_x\colon A/{I}\to\widetilde{\rescompl{x}}$, where $I$ is the largest ideal of definition of $A$.
The kernel of $\widetilde\chi_x$ is by definition the point $\Sp_\X(x)\in\X_0=\Spec(A/I)$.
If $\calU$ is an open formal subscheme of $\X$ then $\calU^\beth\cong\Sp_\X^{-1}(\calU_0)$, and the restriction of $\Sp_\X$ to the latter coincides with $\Sp_\calU$.
Therefore, the definition we gave extends to general special formal $R$-schemes.
The map $\Sp_\X$ is anticontinuous, i.e. the inverse image of an open subset of $\X$ is closed.
For example, if $\X=\Spf(A)$ and $Z$ is a closed subset of $\X$ defined by an ideal $(f_1,\ldots,f_r)$, then 
\[
\Sp_\X^{-1}(Z)=\big\{ x\in\X^\beth \,\big|\, |f_i(x)|<1 \text{ for all }i=1,\ldots,r \big\}.
\]
As in \cite[0.2.6]{Berthelot}, $\Sp_\X$ can be viewed as a morphism of locally ringed sites $\Sp_\X:\X^\beth\to\X$.
Note that this map is often called also \defi{reduction map}.
}

\pa{\label{lemma fiber reduction}
If $f:\Y\to\X$ is a morphism of special formal $k$-schemes, then 
$\Sp_{\X}\circ f^\beth=f\circ\Sp_\Y$.
Moreover, if $Z$ is a subscheme of $\X_0$, then by \cite[0.2.7]{Berthelot} (or \cite[1.3]{Ber96}) the canonical morphism of formal $k$-schemes $\widehat{\X/Z}\to\X$ induces an isomorphism of $k$-analytic spaces $(\widehat{\X/Z})^\beth\cong\Sp_\X^{-1}(Z)$.
}

\pa{ Assume from now on that we are working over a trivially valued field $k$, and let $\X$ be a special formal $k$-scheme.
The closed immersion $\X_0\to\X$ gives rise to an immersion $\big(\X_0\big)^\beth \to \X^\beth$, and we define the \defi{punctured Berkovich space} $\Xeta$ of $\X$ as the subspace
\[
\Xeta=\X^\beth\setminus\X_0^\beth
\]
of $\X^\beth$. 
It's a $k$-analytic space, introduced by Thuillier in \cite[1.7]{Thu07} (where it is called the generic fiber of $\X$).
Any adic morphism of special formal $k$-schemes $f:\Y\to\X$ induces a morphism $f^*\colon\Y^*\to\Xeta$, since we have 
$\big(f^\beth\big)^{-1}\big(\X_0^\beth\big)=\Y_0^\beth$.
We will denote again by $\Sp_\X\colon\Xeta\to\X$ the restriction of the specialization map to $\Xeta$.
}

\Pa{Examples}{
If $\X$ is of finite type over $k$, then $\X_0=\X$, and therefore $\Xeta$ is empty. 
If $\X=\Spf\big(k\lbrack\lbrack t\rbrack\rbrack\big)$, then $\Xeta$ is the punctured open unit disc in $\mathbb A^{1,\mathrm{an}}_k$, which is homeomorphic to the open interval $\left]0,1\right[$.
}

\pa{\label{example_affine_punctured_space}
If $\X=\Spf\big(k\{X_1,\ldots,X_n\}\lbrack\lbrack Y_1,\ldots,Y_m\rbrack\rbrack/(f_1,\ldots,f_r)\big)$ is an affine special formal $k$-scheme, we can describe $\Xeta$ along the lines of~\ref{example_affine_beth_space}.
The complement in $\Xeta$ of the zero locus $V(Y_i)$ of one of the $Y_i$'s is the increasing union $\Xeta\setminus V(Y_i)=\bigcup_{0<\varepsilon\leq1/2}W_{i,\varepsilon}$, where $W_{i,\varepsilon}$ is the subspace of $\X^\beth$ cut out by the inequalities $|Y_j|\leq1-\varepsilon$ for every $j$ and $\varepsilon\leq|Y_i|$.
The subspace $W_{i,\varepsilon}$ is an affinoid domain of $\Xeta$, with associated affinoid $k$-algebra
\[
\frac{k\big\{X_1,\ldots,X_n\big\}\big\{\varepsilon Y_i^{-1},(1-\varepsilon)^{-1}Y_1,(1-\varepsilon)^{-1}Y_2,\ldots,(1-\varepsilon)^{-1}Y_m\big\}}{(f_1,\ldots,f_r)}.
\]
We then have $\Xeta=\bigcup_{i=1}^m\big(\bigcup_{0<\varepsilon\leq1/2}W_{i,\varepsilon}\big)$.
Moreover, if we denote by $W_{i,\varepsilon}^\circ$ the open subspace of $\X^\beth$ cut out by $|Y_j|<1-\varepsilon$ for every $j$ and $\varepsilon>|Y_i|$, then the family $\big\{W_{i,\varepsilon}^\circ\big\}_{i=1,\ldots,m,\,0<\varepsilon\leq1/2}$ is an open cover of $\Xeta$.
Note that if $t$ is a nonzero element of an ideal of definition of $\X$, we can analogously write $\Xeta\setminus V(t)=\X^\beth\setminus V(t)$ as an increasing union of affinoid domains $\{W_{t,\varepsilon}\}_\varepsilon$.
}

\Pa{Fundamental example}{\label{R: interpretation algebraic case}
We now discuss in detail what happens in the algebraic case of \ref{example_algebraic}, when working over $k$.
Let $X$ be a separated $k$-scheme of finite type.
Then with every point $x$ of $X^{\mathrm{an}}$ we can associate a morphism $\varphi_x\colon\Spec\big(\rescompl{x}\big)\to X$, which sits in the following commutative diagram:
\begin{displaymath}
    \xymatrix@R=1.5pc@C=3pc@M=3pt@L=3pt{
        \Spec\big(\rescompl{x}\big) \ar[d] \ar[r]^(.61){\varphi_x} & X\ar[d] \\
        \Spec\big(\rescompl{x}^\circ\big) \ar[r]       & \Spec(k) }
\end{displaymath}
where $\rescompl{x}^\circ$ is the valuation ring of $\rescompl{x}$.
We say that $x$ \defi{has center} on $X$ if we can fit in the diagram above a morphism $\overline{\varphi_x}\colon\Spec\big(\rescompl{x}^\circ\big)\longrightarrow X$ that extends $\varphi_x$.
By the valuative criterion of separatedness if such an extension exists then it is unique.
The \defi{center} of $x$ on $X$ is then by definition the image in $X$ of the closed point of $\Spec(\rescompl{x}^\circ)$ via $\overline{\varphi_x}$.
We denote this point by $\Sp_X(x)$, and we write $X^\beth$ for the subset of $X^{\mathrm{an}}$ consisting of the points that have center on $X$.
The space $X^\beth$ is a compact analytic domain of $X^\mathrm{an}$ which can be thought of as a bounded version of $X^{\mathrm{an}}$, and it coincides with the space defined in \ref{definition_beth_space} if $X$ is seen as a formal $k$-scheme of finite type.
For example, $\big(\mathbb A^n_k\big)^\beth$ is the closed unit polydisc in $\mathbb A^{n,\mathrm{an}}_k$.
If $X$ is proper, then by the valuative criterion of properness we have $X^\beth\cong X^{\mathrm{an}}$.
Now let $Z$ be a closed subvariety of $X$ and set $\X=\widehat{X/Z}$. 
Then we have $\X^\beth=\Sp_X^{-1}(Z)$.
Moreover, the restriction of $\Sp_X$ to $\X^\beth$ is the specialization map $\Sp_\X$ defined in \ref{definition_specialization_map}.
The space $\X^\beth$ can be thought of as an (infinitesimal) \defi{tubular neighborhood} of $Z^\beth$ in $X^\beth$.
Note that $Z^{\mathrm{an}}$ is canonically isomorphic to the subspace $\rho^{-1}(Z)$ of $X^{\mathrm{an}}$, where $\rho\colon X^{\mathrm{an}}\to X$ is the structure morphism defined in Example~\ref{analytification_functor}.
Since $Z$ is closed in $X$, by the valuative criterion of properness we have $Z^\beth=Z^{\mathrm{an}}\cap X^\beth\subset X^{\mathrm{an}}$.
Therefore, we have $\Xeta=\X^\beth\setminus Z^\beth = \Sp_X^{-1}(Z)\setminus\rho^{-1}(Z)$.
In words, $\Xeta$ is the set of semivaluations on $X$ that have center in $Z$ but are not semivaluations on $Z$.
It can be thought of as a \defi{punctured tubular neighborhood} (or \defi{link}) of $Z^\beth$ in $X^\beth$.
}

\pa{\label{invariance_generic_fiber}
Let $f:\Y\to\X$ be an admissible blowup of special formal $k$-schemes. Then $f$ induces an isomorphism of punctured spaces 
$f^*\colon\Y^*\stackrel{\sim}{\longrightarrow}\Xeta$. 
In the algebraic case of Example~\ref{R: interpretation algebraic case} this follows from the valuative criterion of properness (see \cite[1.11]{Thu07}); the general case is \cite[4.5.1]{Ben-BassatTemkin2013}.
}

We conclude the section by giving definitions of admissibility for special formal $k$-schemes and for special $k$-algebras.

\pa{\label{definition_admissible_scheme}
Let $\X$ be a special formal $k$-scheme. 
We say that $\X$ is \defi{admissible} if the canonical morphism of sheaves
$
\OO_\X\to (\Sp_\X)_*\OO_\Xeta
$
is a monomorphism.
This is equivalent to the fact that $\OO_\X(U)\to\OO_\Xeta\big(sp_\X^{-1}(U)\big)$ is injective for every open $U$ of $\X$, so the property of being admissible can be thought of as having schematically dense generic fiber.
}

\pa{\label{definition_admissible_algebra}
If $A$ is a special $k$-algebra and $J$ is the largest ideal of definition of $A$, we define the \defi{torsion ideal} of $A$ as $A_{\tors}=\big\{a\in A \,\,\,\big|\,\,\,a\in A_{t-\tors}\,\, \forall t\in J\big\}$, where $A_{t-\tors}$ denotes the $t$-torsion of $A$; then $A_{\tors}$ is an ideal of $A$.
We say that $A$ is \defi{admissible} if $A_{\tors}=0$.
}

\Pa{Remarks}{\label{admissible_equivalence}
If $A$ is a nonzero admissible special $k$-algebra, then the largest ideal of definition of $A$ is nonzero, so that $A$ is not topologically of finite type over $k$.
If moreover $A$ is a domain, the converse holds as well: $A$ is admissible if and only if it is not topologically of finite type over $k$.
If $\{g_1,\ldots,g_s\}$ is a set of generators of $J$, then $A_{\tors}=\cap_{i=1}^sA_{g_i-\tors}$, hence $A$ is admissible if and only if the canonical morphism $A\to\prod_{i=1}^s A[g_i^{-1}]$ is injective.
As is done in the finite type case in \cite[7.3.13]{Bosch14}, one can use this injectivity to deduce that, if $A$ is an admissible special $k$-algebra, then for every element $f$ of $A$ the complete localization $A\big\{f^{-1}\big\}$ is admissible. 
If $A$ is an algebra topologically of finite type over $k[[t]]$,
seen as a special $k$-algebra, then $A$ is admissible if and only if it has no $t$-torsion. 
This shows that our definition of admissible algebra coincides with the usual one in this case.
We will prove in Proposition~$\ref{admissible}$ that an affine special formal $k$-scheme $\Spf A$ is admissible if and only if $A$ is an admissible special $k$-algebra. 
It will then be clear that our definition is analogous to the usual one for formal $R$-schemes of finite type.
}


\section{Normalized Berkovich spaces of special formal \texorpdfstring{$k$-} -schemes}
\label{section_3.1}

In this section we start by defining an $\I$-action on the punctured Berkovich space $\Xeta$ of a special formal $k$-scheme $\X$. 
We then introduce our primary object of study, the Normalized Berkovich space $\T$ of $\X$, as the quotient of $\Xeta$ by this action.

\pa{\label{ex_action_closed_disc}
One important feature of Berkovich analytic spaces is that they distinguish between equivalent but not equal seminorms.
For example, if $k$ is a trivially valued field, $\gamma$ is an element of $\I$, and $|\cdot|_x$ is an element of the closed unit disc $\Spf(k\{T\})^\beth=\m{k\{T\}}$ in the analytic affine line $\mathbb A^{1,\mathrm{an}}_k$, then also $|\cdot|_x^\gamma$ is an element of $\m{k\{T\}}$.
Indeed, the Banach norm of $k\{T\}=k[T]$ is the $T$-adic one with $|T|=1$, so it is the trivial norm; it follows that the elements of $\m{k\{T\}}$ are the seminorms $|\cdot|_x$ on $k\{T\}$ satisfying $|f|_x\leq 1$ whenever $f\in k\{T\}$. 
Then $|\cdot|_x^\gamma$ is multiplicative, trivial on $k$, it satisfies both the ultrametric inequality and $|f|_x^\gamma\leq 1$ for $f\in k\{T\}\setminus\{0\}$. 
Similarly, if $\X=\Spf(k\lbrack\lbrack T\rbrack\rbrack)$ then $\X^{\beth}$ is the open unit disc $D_-$  in the analytic affine line $\mathbb A^{1,\mathrm{an}}_k$ and $\Xeta$ is the punctured open unit disc $D_-\setminus\{0\}$.
The latter is homeomorphic to the open segment $\left]0,1\right[$, and under this identification $\I$ acts freely on it by exponentiation. 
Observe that the fact that the absolute value of $k$ is trivial, and thus invariant under exponentiation by elements of $\I$, is crucial.
}

\pa{
More generally, let $k$ be a trivially valued field and consider an affine special formal $k$-scheme $\X=\Spf\big(k\{X_1,\ldots,X_n\}\lbrack\lbrack Y_1,\ldots,Y_m\rbrack\rbrack/(f_1,\ldots,f_r)\big)$.
It follows from the definition given in \ref{definition_beth_space} that, by seeing it as a subset of the analytic affine space $\mathbb A_k^{n+m,\mathrm{an}}$, the set $\X^\beth$ is the set of multiplicative seminorms $|\cdot|_x \colon k[X_1,\ldots,X_n] \lbrack\lbrack Y_1,\ldots,Y_m \rbrack\rbrack /(f_1,\ldots,f_r)$ which are trivial on $k$ and such that $|X_i|_x\leq1$ and $|Y_j|_x<1$ for every $i$ and $j$.
Therefore, for every element $\gamma$ of $\I$ the seminorm $|\cdot|_x^\gamma$ is itself an element of $\X^\beth$.
Moreover, $\X_0^\beth$ is defined in $\X^\beth$ by the equalities $Y_1=\ldots=Y_m=0$, therefore the $\I$-action restricts to an action on $\X^*=\X^\beth\setminus \X_0^\beth$.
}

\pa{
If $\Spf B$ is a subscheme of the affine special formal scheme $\Spf A$, the induced map $(\Spf B)^*\to(\Spf A)^*$, being induced by the composition of a seminorm with the morphism $B\to A$, is equivariant with respect to the $\I$-actions.
This allows to extend the $\I$ action to the punctured Berkovich space of a general special formal $k$-scheme $\X=\bigcup_i\Spf(A_i)$, by covering $(\Spf A_i)\cap(\Spf A_j)$ with affine subschemes.
}

\begin{rem}The $\I$-action on $\Xeta$ is free (i.e. the orbits $\I\cdot x$ are in bijection with $\I$). 
Indeed, either $\I\cdot x\cong \I$ or $\I\cdot x=\{x\}$; the latter is equivalent to $x$ being a trivial absolute value, but all trivial absolute values of $\X^{\beth}$ lie in $\X_0^{\mathrm{an}}$.
\end{rem}

\pa{
We deduce from the discussion above that the association $\X\mapsto\Xeta$ gives a functor from the category of special formal $k$-schemes with adic morphisms to the category of $k$-analytic spaces with a free $\I$-action on the underlying topological space and equivariant analytic morphisms.
}

We now consider the quotient of $\Xeta$ by the $\I$-action.

\pa{\label{def_quotient_map}
Let $\X$ be a special formal scheme over $k$. 
Denote by $\T$ the (set-theoretic) quotient of the space $\Xeta$ by the action of $\I$, and by $\pi:\Xeta\to\T$ the quotient map. 
We endow $\T$ with both the quotient topology and the quotient $G$-topology. 
The latter is defined as follows: we declare that a subspace $U$ of $\T$ is $G$-admissible if $\pi^{-1}(U)$ is an analytic domain of $\Xeta$, and that a family $\{U_i\}_i$ of subspaces of $U$ is a $G$-cover of $U$ if $\{\pi^{-1}(U_i)\}_i$ is a $G$-cover of $\pi^{-1}(U)$. 
It is easy to verify that this defines a $G$-topology on $\T$, which is finer than the quotient topology because the $G$-topology on $\Xeta$ is finer than the Berkovich topology. 
As is the case for usual Berkovich spaces, the $G$-admissible subsets of $\T$, which should not be thought of as open subsets, will be called \defi{analytic domains} of $\T$.
To provide $\T$ with the structure of a ringed $G$-topological space in $k$-algebras, we endow it with the sheaf $\OO_\T=\pi_*\OO_\Xeta$, the push-forward of the sheaf of analytic functions on $\Xeta$ via the projection map.
We will often denote by $\OO_\T$ also the restriction of the previous sheaf to the usual topology of $\T$, and when talking about stalks of $\OO_\T$ we will always consider the stalk with respect to the usual topology.
}

\begin{rem}
The $G$-covers of $\T$ can be described explicitly as follows.
If $U$ is an analytic domain of $\T$ and $\{U_i\}_{i\in I}$ is the family consisting of the analytic domains of $\T$ contained in $U$, then the $U_i$ form a $G$-cover of $U$ if and only if for every point $x$ of $U$ there exists a finite subset $I_x$ of $I$ such that $\bigcup_{i\in I_x}U_i$ is a neighborhood of $x$ and $x\in\bigcap_{i\in I_x}U_i$.
This fact follows from the corresponding statement for $\Xeta$ and from the openness of $\pi$, which will be proven in Lemma~\ref{L: normalized hausdorff}.
\end{rem}

\pa{ \label{bounded_functions_on_T} 
If $f\in\OO_T(V)$ is a function on $V\subset\T$, we do not obtain a real value by evaluating $f$ in a point of $V$. 
Nevertheless, it makes sense to ask whether this value lies in $\{0\}$, $\{1\}$, $\left]0,1\right[$ or $]1,\infty[$, since these sets are the orbits of the action of $\I$ on $\mathbb R_{\geq0}$ by exponentiation. 
In particular, the sheaf $\pi_*\OOO_\Xeta$, which is a subsheaf of $\OO_\T$, can really be thought of as the \defi{sheaf of analytic functions bounded by $1$} on $T_\X$, and the sheaf $\pi_*\OOOO_\Xeta$ can be thought of as the \defi{sheaf of analytic functions strictly bounded by $1$} on $T_\X$. 
We denote these sheaves by $\OOO_\T$ and $\OOOO_\T$ respectively.
}

\pa{\label{abstract_valuation_ring}
A more intrinsic way to see this is to observe that with any point $x$ of $\T$ is associated an abstract valued field, the field $\mathscr H(y)$ for any point $y$ of $\pi^{-1}(x)$, endowed with an abstract valuation but not with an absolute value.
If $f$ is a function on $\T$ then $f(x)$ makes sense as an element of this valued field, and $f$ is bounded by $1$ at $x$ if $f(x)$ belongs to the corresponding abstract valuation ring.
Moreover, $|f(x)|$ makes sense as an element of the corresponding value group, while choosing a preimage $y$ for $x$ corresponds to choosing an embedding of this value group into $\R_+^\times$, which yields a real value for $|f(x)|$.
}

\begin{rem} \label{functions_on_T} 
It also makes sense to evaluate at points of $\T$ every function that is constant on the orbits of points for the $\I$-action. 
For example, if $f$ and $g$ are functions on $V\subset\T$ and $x$ is a point of $V$ where $f$ and $g$ do not vanish, then we can evaluate $\log|f|/\log|g|$ at $x$. 
Note that this value is encoded in the structure of the normalized space.
For example, if both $f$ and $g$ take values in $\left]0,1\right[$, then $(\log|f|/\log|g|)(x)$ can be defined as $\sup\{a/b \,\,|\,\, a,b\in\mathbb N,b\neq0 \mbox{ and } |f^b(x)|\leq|g^a(x)|\}$; the other cases are similar.
\end{rem}

\begin{lem}\label{L: normalized hausdorff}
The projection $\pi:\Xeta\to\T$ is an open map, and the topological space underlying $\T$ is Hausdorff.
\end{lem}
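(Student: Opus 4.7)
For openness, I would show that each translation $\mu_\gamma : \Xeta \to \Xeta$, $x \mapsto \gamma\cdot x$, is a homeomorphism; then for any open $U \subseteq \Xeta$ the saturation $\pi^{-1}(\pi(U)) = \bigcup_{\gamma\in\I}\mu_\gamma(U)$ is open, and so $\pi(U)$ is open by definition of the quotient topology. Since $\mu_\gamma$ has inverse $\mu_{\gamma^{-1}}$, it suffices to verify continuity of $\mu_\gamma$, which is a local statement. Given $x_0 \in \Xeta$, the proof of Lemma~\ref{definition_action} provides an affinoid domain $V \cong \m{\calV}$ of $\Xeta$ containing both $x_0$ and $\mu_\gamma(x_0)$, on which the action is composition of bounded seminorms with $\exp_\gamma$; this is continuous in the Berkovich topology (pointwise convergence of seminorms), since $r \mapsto r^\gamma$ is continuous on $\mathbb{R}_{\geq 0}$.

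For Hausdorffness, the plan is a slice construction. Fix distinct classes $[x_1] \neq [x_2]$ in $\T$ and work in an affine open $\Spf(A) \subseteq \X$ whose punctured space contains both representatives. Let $J \subseteq A$ be the largest ideal of definition. Since each $x_i$ lies in $\Xeta$, its kernel $\ker x_i \subset A$ is a prime ideal not containing $J$; by prime avoidance, some $t \in J$ lies outside $\ker x_1 \cup \ker x_2$, and topological nilpotence of $t$ then forces $0 < |t(x_i)| < 1$ for $i=1,2$. On the $\I$-stable open $U_t = \{|t| > 0\}$ introduce the slice $S_t := U_t \cap \{|t| = e^{-1}\}$ and the ``normalization'' map $\Phi_t : U_t \to S_t$ defined by $\Phi_t(x) = x^{-1/\log|t(x)|}$. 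A direct computation using $|t(\gamma\cdot x)| = |t(x)|^\gamma$ shows that $\Phi_t$ is $\I$-invariant, indeed lands in $S_t$ (since $|t(\Phi_t(x))| = e^{-1}$), and restricts to the identity on $S_t$. It is continuous on $U_t$ because the $\I$-action is jointly continuous on affinoid domains (again from continuity of $(r,\gamma)\mapsto r^\gamma$) and $x \mapsto -1/\log|t(x)|$ is continuous and nowhere vanishing on $U_t$. Therefore $\Phi_t$ descends to a continuous section $\bar{\Phi}_t : \pi(U_t) \to S_t$ inverse to $\pi|_{S_t}$, exhibiting $\pi(U_t)$ as homeomorphic to the Hausdorff subspace $S_t$ of $\Xeta$; the two classes can then be separated by disjoint open neighborhoods inside $\pi(U_t)$.

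The main technical challenge is the global case, in which the orbits $\I\cdot x_1, \I\cdot x_2$ are not jointly contained in $\calU^*$ for any affine open $\calU \subseteq \X$. Here the plan is to rely on the separatedness of $\X$ (which makes $\Xeta$ itself Hausdorff) together with the openness of $\pi$ established above, to verify instead that the orbit equivalence relation $R \subseteq \Xeta \times \Xeta$ is closed: given a convergent net $(x_n, \gamma_n\cdot x_n) \to (x, y)$, one picks locally an $f$ in the largest ideal of definition near $x$ with $|f(x)| \in \left]0,1\right[$, writes $\gamma_n = \log|f(\gamma_n\cdot x_n)|/\log|f(x_n)|$ eventually, and takes limits to produce the required $\gamma$ with $y = \gamma\cdot x$, reducing everything to the affine prime-avoidance argument above.
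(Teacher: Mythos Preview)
Your proof is correct. For openness you argue exactly as the paper does: each $\mu_\gamma$ is a homeomorphism, hence saturations of opens are open. For Hausdorffness your route differs from the paper's. The paper picks, for two points $x,y$ in distinct orbits, a single affinoid containing both, and then two functions $f,g$ with $\log|f|/\log|g|$ taking different values at $x$ and $y$; continuity of this $\I$-invariant quantity shows the orbit relation is closed, and openness of $\pi$ plus Hausdorffness of $\Xeta$ finishes. You instead use prime avoidance to choose a single $t$ in the largest ideal of definition with $0<|t(x_i)|<1$, and build an explicit continuous section $\bar\Phi_t:\pi(U_t)\to S_t=\{|t|=e^{-1}\}$ by rescaling, thereby identifying $\pi(U_t)$ with a Hausdorff subspace of $\Xeta$. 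This is a cleaner local argument and in fact anticipates Theorem~\ref{thm_structure_arboretum}, where exactly this slice is shown to carry the $k((t))$-analytic structure of $\X_t^\beth$. One remark that simplifies your ``global'' case: since $\Sp_\X$ is $\I$-invariant and $\calU^\beth=\Sp_\X^{-1}(\calU_0)$ is closed in $\X^\beth$ by anticontinuity, the $\I$-orbit of any point of $\calU^*$ stays in $\calU^*$, and $\calU^*$ is closed in $\Xeta$; hence in your net argument $y_n=\gamma_n\cdot x_n$ automatically lies in the same affine piece as $x_n$, and so does the limit $y$. The separate ``global'' reduction is therefore unnecessary.
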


\begin{proof}
The projection $\pi:\Xeta\to\T$ is an open map because $I$ acts on $\Xeta$ by homeomorphisms (this follows from the definition of the Berkovich topology on $\Xeta$ and the continuity of $x\mapsto x^\lambda$). 
Since $\Xeta$ is Hausdorff, to prove that the quotient $\T$ is Hausdorff as well it is sufficient to show that the orbit equivalence relation is closed in $\Xeta\times\Xeta$.
Moreover, without loss of generality we can assume that $\X=\Spf A$ is affine.
Now, if $x$ and $y$ be two points of $\Xeta$ that are not in the same $I$-orbit, we can find two functions $f$ and $g$ in $A$, not vanishing on $x$ and $y$, and such that $\log|f(x)|/\log|g(x)|\neq \log|f(y)|/\log|g(y)|$. 
Since the quotients of the logarithms are continuous and constant on orbits, the orbit equivalence relation is closed in $\Xeta\times\Xeta$, therefore $\T$ is Hausdorff.
\end{proof}

\begin{lem}
The restriction of the $G$-sheaf $\OO_\T$ to the usual topology of $\T$ is a local sheaf.
\end{lem}

\begin{proof}
Let $x$ be a point of $\T$.
Then $\calI=\big\{f\in\OO_{\T,x} \mbox{ s.t. }|f(x)|=0\big\}$ is an ideal of $\OO_{\T,x}$, where we write $\OO_{\T,x}$ for the stalk of $\OO_{\T}$ in $x$ for the usual topology. 
If $|f(x)|\neq 0$ then $f$, seen as a function on a neighborhood of $\pi^{-1}(x)$ in $\Xeta$, does not vanish in any point of some open neighborhood $U$ of some point of $\pi^{-1}(x)$.
Therefore $f$ has no zero, and is hence invertible, on the $\I$-invariant subspace $\pi^{-1}(\pi(U))$, which is open by Lemma~\ref{L: normalized hausdorff}.
This proves that $\calI$ is the unique maximal ideal of $\OO_{\T,x}$, and so the restriction of $\OO_{\T}$ to the usual topology is local.
\end{proof}

\begin{ex} \label{valtree}
If $\X$ is $\Spf\big(\mathbb C[[X,Y]]\big)$, the completion of the complex affine plane $\mathbb A^2_{\mathbb C}$ at the origin, the topological space $\T$ is canonically homeomorphic to the \defi{valuative tree} $T$ introduced by Favre and Jonsson in \cite{valtree}. 
The valuative tree is defined as the set of (semi-)valuations $v$ on $\mathbb C[[X,Y]]$ extending the trivial valuation on $\mathbb C$ and such that $\min\{v(X),v(Y)\}=1$, endowed with the topology it inherits from the Berkovich space $\Xeta$ via the inclusion that sends a valuation $v\in T$ to the seminorm $e^{-v}$. 
The restriction of the projection $\pi:\Xeta\to\T$ to $T$ is then a continuous bijection, therefore it is a homeomorphism because $T$ is compact by \cite[5.2]{valtree} and $\T$ is Hausdorff by Lemma~\ref{L: normalized hausdorff}.
\end{ex}

\begin{ex}\label{example_explicit_normalization}
More generally, in the algebraic case discussed in Examples~\ref{example_algebraic} and \ref{R: interpretation algebraic case}, i.e. when $\X=\widehat{X/Z}$ is the formal completion of a $k$-variety $X$ along a closed subvariety $Z$, the normalized Berkovich space $\T$ can be thought of as the {\it normalized non-archimedean link} of $Z$ in $X$.
The topological space underlying $\T$ can be described explicitly as the space of normalized valuations on $X$ that are centered on $Z$ but are not valuations on $Z$, i.e. $\T=\left(\Sp_X^{-1}(Z)\setminus Z^\beth\right)\big/\I$.
An explicit normalization can be given as follows.
Let $\calI$ be the coherent ideal sheaf of $X$ defining $Z$, and for each element $x$ of $\Xeta$ set $x(\calI)=\max\big\{|f(x)|\;\big|\;f\in\calI_{\Sp_\X(x)}\big\}>0$, where $\calI_{\Sp_\X(x)}$ denotes the stalk of $\calI$ at ${\Sp_\X(x)}$.
Then, as in Example~\ref{valtree}, since for every $\gamma$ in $\I$ and $x$ in $\Xeta$ we have $\gamma\cdot x(\calI)=x(\calI)^\gamma$ the restriction of $\pi$ to the subspace $\{x\in X^\beth \;|\; x(\calI)=1/e\}$ of $\Xeta$ is a homeomorphism onto $\T$.
Therefore in this case the topological space we consider is similar to the \defi{normalized valuation space} considered in \cite[\S2.2]{BoucksomdeFernexFavreUrbinati13}, but they consider only valuations with trivial kernel.
Moreover, our definition as a quotient is more intrinsic as it does not depend on the choice of the real number $1/e$.
\end{ex}

\pa{
Let $\X$ be a special formal $k$-scheme.
Then the specialization map on $\Xeta$ induces an anticontinuous map $\Sp_\X:\T\to\X$, which we call again \emph{specialization}.
Indeed, as we observed in \ref{abstract_valuation_ring} the valuation ring of two elements of $\Xeta$ in the same $\I$-orbit is the same, therefore the specialization map on $\Xeta$ passes to the quotient, inducing a map of sets $\Sp_\X:\T\to\X$ such $\Sp_\X\circ\pi=\Sp_\X$. 
Anticontinuity follows from the fact that the specialization on $\Xeta$ is anticontinuous and $\pi$ is open by Lemma~\ref{L: normalized hausdorff}.
We will prove in Proposition~\ref{sp_surjective} that if $\X$ is admissible then the specialization map is surjective.
}

\pa{\label{definition_category_C} 
To study the spaces we have been considering so far, it is convenient to temporarily introduce a suitable category.
We denote by $\calC$ the category whose objects are the triples $\big(T,\OO_T,\OOO_T\big)$, where $T$ is a topological space endowed with an additional $G$-topology that is finer than its given usual topology (which means that every open subspace of $T$ is also a $G$-admissible open and every open cover is a $G$-cover), $\OO_T$ is a sheaf of $k$-algebras on $T$ for the $G$-topology, and $\OOO_T$ is a subsheaf in $k$-algebras of $\OO_T$; and such that a morphisms $\big(T,\OO_T,\OOO_T\big)\to \big(T',\OO_{T'},\OOO_{T'}\big)$ is given by a continuous and $G$-continuous map $f:T\to T'$ and a morphism of sheaves $f_\#:\OO_{T'}\to f_*\OO_{T}$ such that $f\big(\OOO_{T'}\big)\subset f_*\OOO_T$ and inducing a local morphism of local sheaves once restricted to the usual topology.
We will always write simply $\OO_T$ (respectively $\OOO_T$) also for the restriction of $\OO_T$ (resp. $\OOO_T$) to the topology of $T$ and, when no risk of confusion will arise, we will write $T$ for an object $\big(T,\OO_T,\OOO_T\big)$ of $\calC$. 
}

\pa{
Let $\X$ be a special formal scheme over $k$. 
We define the \defi{normalized Berkovich space} of $\X$ as the object $\T=\big(\T,\OO_\T,\OOO_\T\big)$ of $\calC$.
This gives a functor $T:\big(SFor_k\big)\to\calC$ from the category of special formal $k$-schemes with adic morphisms to $\calC$. 
In Section~\ref{section_3.4} we will investigate the properties of the functor $T$ and determine its essential image.
}

\begin{rem}\label{homotopy_normalized}
Thuillier proved in \cite{Thu07} that whenever $k$ is perfect, $X$ is a $k$-variety with singular locus $Z$ and $\X=\widehat{X/Z}$, the homotopy type of $\X^*$ is the same as the homotopy type of the dual complex $\mathrm{Dual}(D)$ of the exceptional divisor $D$ of a log resolution $Y$ of $X$.
Using toroidal methods, he constructs an embedding of $\mathrm{Dual}(D)\times\I$ into $\Y^*$ and a deformation retraction of the latter onto the former, where $\Y=(\widehat{Y/D})$.
As both the embedding and the retraction are $\I$-equivariant, quotienting by the action of $\I$ we obtain a deformation retraction of $T_\Y$ onto a closed subspace homeomorphic to $\mathrm{Dual}(D)$.
Since $T_\Y\cong\Txz$, we deduce that the homotopy type of $\Txz$ is the same as the homotopy type of $\mathrm{Dual}(D)$.
Note that by \cite{Kollar13} the homotopy type of $\mathrm{Dual}(D)$ can be almost arbitrary.
However, by \cite{deFernexKollarXu12} $\mathrm{Dual}(D)$ is contractible for a wide class of singularities, namely isolated log terminal singularities (in particular, for all toric or finite quotient singularities).
\end{rem}

\pa{\label{def_forgetful_functor}
We also have a forgetful functor $\forg:\big(An_{k((t))}\big)\to\calC$ sending a $k((t))$-analytic space $X$ to the triple $\big(X,\OO_X,\OOO_X\big)$. 
}


\section{Local analytic structure}
\label{section_3.2}
In this section we prove one of the main properties of normalized spaces of special formal $k$-schemes. 
Although those spaces are not analytic spaces themselves, as locally ringed spaces in $k$-algebras they are $G$-locally isomorphic to analytic spaces defined over some Laurent series field $k((t))$. 
This is the content of Corollary~\ref{locally_analytic}. 
This result is conceptually similar to those discussed in \cite[\S4.2, 4.3, 4.4, 4.6]{Ben-BassatTemkin2013}.
We deduce an analogue for normalized spaces of a theorem of de Jong (Corollary~\ref{dejong_normalized}), and a characterization of admissible special formal $k$-schemes (Proposition~\ref{admissible}).
We also prove that the specialization map is surjective for the normalized space of an admissible special formal $k$-scheme (Theorem~\ref{sp_surjective}).
We first need some results about the Berkovich spaces associated with affine special formal $k$-schemes.

\begin{prop}\label{G-admissible_strip} 
Let $\X$ be an affine special formal scheme over $k$, let $t$ be a nonzero element of an ideal of definition of $\X$ and let $U$ be a subset of $\Xeta\setminus V(t)$ stable under the action of $\I$. 
Then $U$ is an analytic domain of $\Xeta$ if and only if we can write it as a union $U=\cup_i U_i$ in such a way that each $U_i$ is stable under the action of $\I$ and is an increasing union $U_i=\cup U_{i,\varepsilon}$ for $\varepsilon$ small enough, with $U_{i,\varepsilon}$ a strict affinoid subdomain of $W_{t,\varepsilon}$, and $\{U_{i,\varepsilon}\}_{i,\varepsilon}$ is a $G$-cover of $U$.
\end{prop}

Before proving this proposition we will establish a simple lemma.

\begin{lem}\label{orbit_affinoid}
Let $\X$ be an affine special formal scheme over $k$, let $t$ be a nonzero element of an ideal of definition of $\X$ and let $V$ be an affinoid domain of $\Xeta$ such that $|t|=1/2$ on $V$.
Denote by $\I\cdot V$ the set of translates of $V$ in $\Xeta$ under the $\I$-action. 
Then we can write $\I\cdot V$ as a finite union $\cup_{i}V_i$, with each $V_i$ stable under the action of $\I$ and such that $V_{i,\varepsilon}:=V_i\cap W_{t,\varepsilon}$ is a strict affinoid subdomain of $W_{t,\varepsilon}$ for $\varepsilon$ small enough.
In particular, each $V_i$ is an increasing union of affinoid domains of $\Xeta$, and $\I\cdot V$ is an analytic domain of $\Xeta$.
\end{lem}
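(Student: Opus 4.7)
The plan is to combine compactness of $V$—which will force $V$ inside some large affinoid $W_{t,\varepsilon_0}$—with the explicit combinatorial effect of the $\I$-action on the defining inequalities of $V$, so as to describe $\I\cdot V$ by a finite system of homogeneous inequalities. First, since $V$ is compact and $t$ does not vanish on $V$, applying compactness to the increasing exhaustion $\Xeta\setminus V(t)=\bigcup_{0<\varepsilon\leq 1/2} W_{t,\varepsilon}$ from~\ref{example_affine_punctured_space} yields some $\varepsilon_0>0$ with $V\subset W_{t,\varepsilon_0}$. Then $V$ is an affinoid subdomain of the affinoid $W_{t,\varepsilon_0}$, so by Gerritzen--Grauert it can be covered by finitely many rational subdomains of $W_{t,\varepsilon_0}$; since $\I\cdot(V\cup V')=(\I\cdot V)\cup(\I\cdot V')$, it is enough to treat each rational piece.

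Next, suppose $V=\{x\in W_{t,\varepsilon_0}:|f_i(x)|\leq r_i|g(x)|\}$ for functions $f_i,g\in\calW_{t,\varepsilon_0}$ and radii $r_i>0$. Using density of $A[t^{-1}]$ in $\calW_{t,\varepsilon_0}$, I would first approximate the $f_i,g$ by elements of $A[t^{-1}]$, at the cost of slightly modifying the $r_i$. For $f\in A[t^{-1}]$ the identity $|f(\gamma\cdot y)|=|f(y)|^\gamma$ holds for every $y\in\Xeta\setminus V(t)$ and every $\gamma\in\I$ by Lemma~\ref{definition_action}. Thus $y$ belongs to $\I\cdot V$ if and only if there exists $\gamma\in\I$ such that both $\gamma\cdot y\in W_{t,\varepsilon_0}$ (a system of constraints on $\gamma$ coming from $|t(y)|$ and the $|Y_j(y)|$) and $|f_i(y)|^\gamma\leq r_i|g(y)|^\gamma$ for all $i$. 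Eliminating $\gamma$ by taking logarithms reduces this existential condition to a finite family of homogeneous inequalities of the form $|F(y)|^a\leq C|G(y)|^b$, with $F,G$ monomial expressions in the $f_i,g,t,Y_\bullet$ and $a,b,C>0$, which defines an $\I$-stable analytic subset of $\Xeta\setminus V(t)$.

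Finally, I would subdivide this $\I$-stable set, if necessary, into finitely many pieces $V_i$ according to which subset of the finite family of inequalities is simultaneously active, and verify that for every $\varepsilon$ small enough the intersection $V_i\cap W_{t,\varepsilon}$ is a strict affinoid subdomain of $W_{t,\varepsilon}$. Each inequality $|F|^a\leq C|G|^b$ becomes, on $W_{t,\varepsilon}$ (where $t$ is invertible), a Laurent-type condition in monomials of the generators of $\calW_{t,\varepsilon}$. The hard part will be precisely this last step: the exponents $a,b$ come from the radii $r_i$ and from $\varepsilon_0$ and $1-\varepsilon_0$ and are therefore a priori arbitrary positive reals, so strictness of the resulting affinoid inequalities will require a careful choice of $\varepsilon$ together with the subdivision into the $V_i$ to isolate the different combinatorial regimes. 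Once the $V_{i,\varepsilon}$ are known to be strict affinoid subdomains, $\I$-stability of each $V_i$ is built into the construction, $V_i=\bigcup_\varepsilon V_{i,\varepsilon}$ is an increasing union of affinoid domains since $\Xeta\setminus V(t)=\bigcup_\varepsilon W_{t,\varepsilon}$ and $V_i$ avoids $V(t)$, and $\I\cdot V=\bigcup_i V_i$ is therefore an analytic domain of $\Xeta$.
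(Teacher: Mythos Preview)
Your overall strategy—compactness to place $V$ inside some $W_{t,\varepsilon_0}$, then Gerritzen--Grauert to reduce to rational subdomains, then an analysis of the $\I$-orbit of each piece—is exactly the route the paper takes. Two points, however, make your argument longer than necessary and leave what you call the ``hard part'' unresolved.

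First, the approximation step is superfluous. Lemma~\ref{definition_action} already gives $|f(\gamma\cdot y)|=|f(y)|^\gamma$ for \emph{every} $f$ in the affinoid algebra $\calW_{t,\varepsilon_0}$ (whenever both $y$ and $\gamma\cdot y$ lie in $W_{t,\varepsilon_0}$), not only for $f\in A[t^{-1}]$. So there is no need to perturb the $f_i,g$, and no error terms to carry through the rest of the argument.

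Second, and this is the decisive simplification you miss, once a rational piece $V'_i$ is written as $\{|f_j|\leq r_j|g_j|\}$ inside $W_{t,\delta}$, the paper computes its $\I$-orbit in one stroke: acting by $\lambda$ replaces each $r_j$ by $r_j^\lambda$, so taking the union over $\lambda\in\I$ simply discards every inequality with $r_j\neq 1$ and keeps unchanged those with $r_j=1$. Hence $(\I\cdot V'_i)\cap W_{t,\varepsilon}$ is already the \emph{strict} rational subdomain
\[
\big\{x\in W_{t,\varepsilon}\;:\;|f_j(x)|\leq|g_j(x)|\text{ for all }j\text{ with }r_j=1\big\},
\]
the surviving radii being $1\in|k^\times|$ by construction. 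Your worry about irrational exponents arising from the $r_i$, $\varepsilon_0$, $1-\varepsilon_0$ therefore never materialises: the non-unit radii are absorbed by the $\I$-action rather than surviving as awkward exponents, and no log-elimination or combinatorial case subdivision is required. Setting $V_i=\I\cdot V'_i$ then finishes the proof immediately.
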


\begin{proof}
For every $\varepsilon\leq 1/2$, $V$ is an affinoid subdomain of $W_{t,\varepsilon}$. 
Therefore, by Gerritzen-Grauert theorem (proofs valid in the case of a trivially valued field are given in \cite{Ducros03} and \cite{Temkin05}), $V$ is a finite union $V=\cup V'_i$ of rational domains of $W_{t,\varepsilon}$. 
Each $V'_i$ is by definition determined in $W_{t,\varepsilon}$ by finitely many inequalities $|f_j|\leq r_j|g_j|$, for some analytic functions $f_j,g_j$ on $W_{t,\varepsilon}$ and $r_j> 0$.
Then we have
\begin{align*}
& (\I\cdot V'_i)\cap W_{t,\varepsilon} = \big\{x\in W_{t,\varepsilon} \mbox{ s.t. }x^\lambda\in V'_i \mbox{ for some }\lambda\in\I\big\} \\ 
 = & \big\{x\in W_{t,\varepsilon} \mbox{ s.t. there exists }\lambda \mbox{ s.t. } |f_j(x)|\leq r_j^{\frac{1}{\lambda}}|g_j(x)|\mbox{ for all }j\big\} \\
 = & \big\{x\in W_{t,\varepsilon} \mbox{ s.t. }|f_j(x)|\leq |t(x)|^{a_j}|g_j(x)|\mbox{ for all } j\big\},
\end{align*}
where $a_j= - \log r_j / \log 2$, yielding a strict affinoid subdomain of $W_{t,\varepsilon}$.
Observe that in the last equality we used the fact that $|t|=1/2$ on $V'_i$, hence $|t(x)|^{a_j}=(1/2)^{\frac{a_j}{\lambda}}=r_j^{\frac{1}{\lambda}}$ for every $x$ in $(\I\cdot V'_i)\cap W_{t,\varepsilon}$ and every $j$.
%
Now, if we set $V_i=\I\cdot V'_i$ and $V_{i,\varepsilon}=V_i\cap W_{t,\varepsilon}$ for every $\varepsilon\leq 1/2$, then the $V_i$ and $V_{i,\varepsilon}$ satisfy our requirements.
Finally, $\I\cdot V$ is an analytic domain of $\Xeta$, which is $G$-covered by the affinoid domains $V_i\cap W_{t,\varepsilon}$. 
Indeed, every point of $\I\cdot V$ is contained in the interior of one set of the form $\I\cdot V\cap W_{t,\varepsilon}$, and the latter is the finite union of the affinoid domains $V_i\cap W_{t,\varepsilon}$.
\end{proof}

\begin{proof}[Proof of Proposition~\ref{G-admissible_strip}]
If $U$ is an analytic domain of $\Xeta$ then it is $G$-covered by the affinoid domains that it contains.
Then the intersection of each of them with $W_{t,1/2}$ is an affinoid domain on which $|t|=1/2$, and applying Lemma~\ref{orbit_affinoid} to all such intersections we get the decomposition that we want.
The converse implication is obvious, since $U$ is by definition $G$-covered by the affinoid domains $U_{i,\varepsilon}$.
\end{proof}

\pa{
For $r\in\left]0,1\right[$, we denote by $K_r$ the affinoid $k$-algebra $k\{rt^{-1},r^{-1}t\}$; it is the completed residue field $\rescompl{r}$ of the point $r$ of $\Spf(k\lbrack\lbrack t\rbrack\rbrack)^*\cong\left]0,1\right[$, and an easy computation shows that it is the field $k((t))$ with the $t$-adic absolute value such that $|t|=r$. 
For $0<\varepsilon<1/2$, the $k$-algebras $\calA_\varepsilon=k\{\varepsilon T^{-1},(1-\varepsilon)^{-1}T\}$ are also isomorphic to the field $k((t))$, but their Banach norms are not $t$-adic (they are not even absolute values). 
If $r\in[\varepsilon,1-\varepsilon]$, the identity map $\calA_{\varepsilon}\to K_r$ is a bounded morphism of Banach $k$-algebras. 
Its boundedness is easy to check algebraically; geometrically this corresponds to the inclusion of the point $r$, into $\calM(\calA_\varepsilon)$, seen as the annulus $\left[\varepsilon,1-\varepsilon\right]$ in $\Spf(k\lbrack\lbrack t\rbrack\rbrack)^*\cong\left]0,1\right[$.
Nevertheless, note that despite having different Banach norms $K_r$ and $\calA_{\varepsilon}$ are isomorphic not only as $k$-algebras, but also as topological $k$-algebras, since the neighborhoods of zero in both algebras coincide.
This has as a very important consequence the following result.
}

\begin{lem} \label{lemma_tensor}
Let $\calB$ be a strict affinoid algebra over $\calA_{\varepsilon}$ and let $r$ be an element of $\left[\varepsilon,1-\varepsilon\right]$. Then the canonical morphism $\calB\to \calB\hat\otimes_{\calA_{\varepsilon}}K_r$ is an isomorphism of $k$-algebras.
\end{lem}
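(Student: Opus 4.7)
The plan is to reduce everything to the key remark just before the lemma: although the Banach norms on $\calA_{\varepsilon}$ and $K_r$ are different, the two algebras are isomorphic as topological $k$-algebras (both are $k((t))$ with the $t$-adic topology, i.e., they have the same neighborhood filter of zero). Since the completed tensor product is a topological notion, it should not ``see'' the difference between these two Banach structures on the same topological ring.

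The first step is to use the assumption that $\calB$ is a strict $\calA_\varepsilon$-affinoid algebra to present it as a quotient
\[
\calB \;=\; \calA_\varepsilon\{T_1,\ldots,T_n\}/I
\]
by a closed ideal $I$. The second step is the key algebraic identification: I would show that $\calA_\varepsilon\{T_1,\ldots,T_n\}$ and $K_r\{T_1,\ldots,T_n\}$ coincide as $k$-algebras, and in fact as topological $k$-algebras. An element of either ring is a formal series $\sum_{\underline i} a_{\underline i}\underline T^{\underline i}$ whose coefficients converge to zero, and the meaning of ``$a_{\underline i}\to 0$'' depends only on the topology on the coefficient ring, which is the same for $\calA_\varepsilon$ and $K_r$. (The two Banach norms on restricted power series that one obtains this way are different but equivalent.) Under this identification, the closed ideal $I$ is the same in both rings.

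The third step concludes using the universal property of the completed tensor product of strict affinoid algebras: the natural map $\calA_\varepsilon\{T_1,\ldots,T_n\} \hat\otimes_{\calA_\varepsilon} K_r \to K_r\{T_1,\ldots,T_n\}$ is an isomorphism of Banach $K_r$-algebras. Combined with the right-exactness of $-\hat\otimes_{\calA_\varepsilon} K_r$ applied to the admissible epimorphism $\calA_\varepsilon\{T_1,\ldots,T_n\}\twoheadrightarrow\calB$, one gets
\[
\calB \hat\otimes_{\calA_\varepsilon} K_r \;\cong\; K_r\{T_1,\ldots,T_n\}/IK_r\{T_1,\ldots,T_n\} \;\cong\; \calA_\varepsilon\{T_1,\ldots,T_n\}/I \;=\; \calB,
\]
where the middle isomorphism uses step two and the fact that $I$ and its image generate the same ideal.

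The main obstacle is conceptual rather than technical: one must be comfortable with the fact that $\calA_\varepsilon \to K_r$, while bounded, is \emph{not} an isometry, so isometric results about completed tensor products over non-archimedean fields cannot be quoted verbatim. Making step two rigorous amounts to observing that the construction of restricted power series and of the completed tensor product of strict affinoid algebras are intrinsically topological rather than metric, and that the two topological $k$-algebra structures on $k((t))$ given by $\calA_\varepsilon$ and $K_r$ agree. Once this is established, everything follows formally.
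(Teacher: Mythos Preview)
Your proposal is correct and follows essentially the same route as the paper: present $\calB$ as a quotient $\calA_\varepsilon\{T_1,\ldots,T_n\}/I$, identify $\calA_\varepsilon\{T_1,\ldots,T_n\}$ with $K_r\{T_1,\ldots,T_n\}$ as $k$-algebras by observing that the convergence condition on coefficients depends only on the topology (which is the same for $\calA_\varepsilon$ and $K_r$), and then pass to the quotient. The only cosmetic difference is that the paper phrases the last step as showing that the \emph{uncompleted} tensor product $\calB\otimes_{\calA_\varepsilon}K_r$ is already complete (being an admissible quotient of a Banach algebra by a closed ideal), whereas you invoke right-exactness of the completed tensor product; these are two ways of saying the same thing.
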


\begin{proof} 
If $\calB=\calA_{\varepsilon}\{X_1,\ldots,X_n\}$, an elementary computation shows that the convergence conditions for a series of the form $\sum_{i_0,I}a_{i_0,I}t^{i_0}\underline X^I$, for $a_{i_0,I}\in k$, to belong to either $\calB$ or $\calB\hat\otimes_{\calA_{\varepsilon}}K_r$ are the same (namely, the coefficients $a_{i_0,I}$ have tend to zero for the $t$-adic topology, which is the same on $\calA_\epsilon$ and on $K_r$). 
Therefore $\calB\otimes_{\calA_{\varepsilon}}K_r$, which is isomorphic to $\calB$ as a $k$-algebra via the canonical morphism $\calB\to \calB\otimes_{\calA_{\varepsilon}}K_r$, is already complete with respect to the tensor product seminorm, and hence coincides with $\calB\hat\otimes_{\calA_{\varepsilon}}K_r$.
In the general case, that is $\calB=\calA_{\varepsilon}\{X_1,\ldots,X_n\}/I$, to show that $\calB\otimes_{\calA_{\varepsilon}}K_r$ is complete observe that $\calB\otimes_{\calA_{\varepsilon}}K_r\cong \big({\calA_{\varepsilon}\{X_1,\ldots,X_n\}\otimes_{\calA_{\varepsilon}}K_r}\big)\big/{I}$ as normed algebras. 
Indeed, they are canonically isomorphic as $k$-algebras, and the fact that the norms are the same follows from the fact that under the isomorphism we have $(a+I)\otimes t=(a\otimes t)+I$, and on both sides the norm of such an element is $|a|\cdot|r|$.
The algebra on the right hand side is complete since it is an admissible quotient of a Banach algebra by a closed ideal, hence $\calB\otimes_{\calA_{\varepsilon}}K_r$ is complete.
\end{proof}

\begin{rem}
The result above can fail if the affinoid $\calA_\varepsilon$-algebra $\calB$ is not strict because the first computation in the proof (that is the case $I=0$) breaks down. 
For example, if $0<r<s<\varepsilon\leq1/2$, then $K_r\hat\otimes_{\calA_\varepsilon}K_s=0$. 
Indeed, the tensor product seminorm on $K_r\otimes_{\calA_\varepsilon}K_s$ is the zero seminorm since the element $1\otimes 1$ of the tensor product is equal to $t^n\otimes t^{-n}$ for every $n\in\N$, so $|1\otimes 1|\leq r^ns^{-n}$, and the latter goes to zero as $n$ goes to infinity. 
\end{rem}

\pa{\label{projection_structure}
If $\X_t$ is a special formal scheme over $k[[t]]$ then it can be seen as a special formal scheme $\X$ over $k$, so we get a morphism of formal $k$-schemes $\X\to\Spf{(k[[t]])}$ and therefore a morphism of $k$-analytic spaces $f:\X^\beth\to\Spf{(k[[t]])}^\beth$.
If $r$ is any point of $\Spf(k\lbrack\lbrack t\rbrack\rbrack)^\beth\cong\left[0,1\right[$, then we can consider the fiber product of $\X^\beth$ with the point $r$ in the category of analytic spaces over $\Spf(k\lbrack\lbrack t\rbrack\rbrack)^\beth$ (see \cite{Ber93}). 
This analytic space is defined over the non-archimedean field $\rescompl{r}$, which coincides with $k$ whenever $r=0$ and is otherwise isomorphic to the field $k((t))$ with the $t$-adic absolute value such that $|t|=r$.
The topological space underlying this fiber product is canonically homeomorphic to the topological fiber: $
f^{-1}(r)\cong\X^\beth\times_{\Spf(k\lbrack\lbrack t\rbrack\rbrack)^\beth}\m{\rescompl{r}}$.
If $\X_t=\Spf A$ is affine, by seing points of both $\X_t^\beth$ (considered as an analytic space over the field $k((t))$ with the $t$-adic absolute value such that $|t|=r$) and $\X^\beth$ as seminorms on $A$, the points of $\X_t^\beth$ satisfying the additional condition $|t|=r$, we deduce the existence of a map $\X_t^\beth\to\X^\beth$, which factors through $f^{-1}(r)$.
In the general case, those maps glue to a map $\X_t^\beth\to f^{-1}(r)$.
}

\begin{lem}\label{lemma_omeo} 
Let $\X_t$ and $f:\X^\beth\to\Spf{(k[[t]])}^\beth$ be as above, choose $0<r<1$ and endow $k((t))$ with the $t$-adic absolute value such that $|t|=r$. 
Then the map $\X_t^\beth\to f^{-1}(r)$ constructed above induces an isomorphism of $k((t))$-analytic spaces between $\X^\beth_t$ and $f^{-1}(r)$.
\end{lem}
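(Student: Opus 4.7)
The plan is to reduce to the affine case, choose an affinoid cover of $\X^\beth$ adapted to the map $f$, and on each affinoid piece apply Lemma~\ref{lemma_tensor} to identify the fiber over $r$ with the corresponding piece of $\X^\beth_t$ in its description as a $k((t))$-analytic space from \ref{example_affine_beth_space}.

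First I would work locally, writing $\X_t=\Spf(A)$ with $A=k[[t]]\{X_1,\ldots,X_n\}[[Y_1,\ldots,Y_m]]/(f_1,\ldots,f_r)$; viewed as a special $k$-algebra this is $k\{X_1,\ldots,X_n\}[[Y_1,\ldots,Y_m,t]]/(f_1,\ldots,f_r)$, so $\X^\beth$ is the zero locus of the $f_i$ in $D^n\times_k D^{m+1}_-$, and the map $f\colon\X^\beth\to\Spf(k[[t]])^\beth$ is induced by the inclusion $k[[t]]\hookrightarrow A$. Any point with $|t|=r>0$ lies in $\Xeta\setminus V(t)=\bigcup_{0<\varepsilon\leq 1/2}W_{t,\varepsilon}$, with notation as in \ref{example_affine_punctured_space}, and for $\varepsilon<\min(r,1-r)$ the affinoid $W_{t,\varepsilon}$ sits over the annulus $A_\varepsilon=\m{\calA_\varepsilon}$ in $\Spf(k[[t]])^\beth$. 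Under this map
$$\calW_{t,\varepsilon}=\calA_\varepsilon\{X_1,\ldots,X_n,(1-\varepsilon)^{-1}Y_1,\ldots,(1-\varepsilon)^{-1}Y_m\}/(f_1,\ldots,f_r)$$
is a strict affinoid $\calA_\varepsilon$-algebra. By the construction of fiber products of analytic spaces (\cite{Ber93}), the fiber $f^{-1}(r)\cap W_{t,\varepsilon}$ is the $K_r$-affinoid space with algebra $\calW_{t,\varepsilon}\hat\otimes_{\calA_\varepsilon}K_r$.

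Lemma~\ref{lemma_tensor} now applies, and identifies this completed tensor product with
$$K_r\{X_1,\ldots,X_n,(1-\varepsilon)^{-1}Y_1,\ldots,(1-\varepsilon)^{-1}Y_m\}/(f_1,\ldots,f_r),$$
which is precisely the $K_r$-affinoid algebra of the $\varepsilon$-piece of $\X^\beth_t$ in its description over $k((t))$ given in \ref{example_affine_beth_space} (applied to $\X_t$ as a special $k[[t]]$-scheme). The identity on $X_i,Y_j$ therefore yields a $K_r$-analytic isomorphism $f^{-1}(r)\cap W_{t,\varepsilon}\xrightarrow{\sim} W^t_\varepsilon$. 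These isomorphisms are compatible for varying $\varepsilon$ (both covers are obtained by shrinking the $|Y_j|<1$ condition uniformly) and so patch to a $K_r$-analytic isomorphism $f^{-1}(r)\cong\X^\beth_t$ in the affine case.

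Finally, I would globalize by choosing an affine cover of $\X_t$: the affine case above provides isomorphisms on each piece, and the functoriality of $(-)^\beth$, of the forgetful functor from $k[[t]]$-formal schemes to $k$-formal schemes, and of the base change $-\times_{\Spf(k[[t]])^\beth}\m{K_r}$ ensures that these isomorphisms agree on overlaps, yielding the desired global isomorphism. The main technical obstacle is the one addressed by Lemma~\ref{lemma_tensor}: one must check that the identification $\calW_{t,\varepsilon}\hat\otimes_{\calA_\varepsilon}K_r\cong\calW_{t,\varepsilon}$ as $k$-algebras is in fact an isomorphism of Banach $K_r$-algebras with the correct norms, which is exactly where the strictness of the $\calA_\varepsilon$-affinoid algebra $\calW_{t,\varepsilon}$ and the agreement of topologies (not norms) between $\calA_\varepsilon$ and $K_r$ are used in an essential way.
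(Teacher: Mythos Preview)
Your proof is correct and follows essentially the same route as the paper: reduce to the affine case, cover $\Xeta\setminus V(t)$ by the affinoids $W_{t,\varepsilon}$ lying over $A_\varepsilon$, identify the affinoid algebra of $f^{-1}(r)\cap W_{t,\varepsilon}$ with $\calW_{t,\varepsilon}\hat\otimes_{\calA_\varepsilon}K_r\cong\calU_\varepsilon$, and glue. The only cosmetic difference is that the paper does not explicitly invoke Lemma~\ref{lemma_tensor} here (that identification is the ordinary base change of a strict affinoid algebra along $\calA_\varepsilon\to K_r$, and Lemma~\ref{lemma_tensor} is reserved for the proof of Theorem~\ref{thm_structure_arboretum}), but your citation does no harm.
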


\begin{proof}
We follow the lines of \cite[4.3]{Nicaise11}. 
Since $\X_t^\beth$ is $G$-covered by the Berkovich spaces of the affine formal subschemes of $\X_t$, we can assume that $\X_t$ is affine, with associated $k[[t]]$-algebra $k[[t]]\{X_1,\ldots,X_n\}[[Y_1,\ldots,Y_m]]/I$. 
Then following \ref{example_affine_beth_space} we write $\X_t^\beth$ as an increasing union $\X_t^\beth=\bigcup_{0<\varepsilon\leq1/2}U_\varepsilon$, where $U_\varepsilon$ is the subspace of $\X_t^\beth$ cut out by the inequalities $|Y_i|\leq1-\varepsilon$; it is an affinoid domain with associated affinoid $k((t))$-algebra
$$
\frac{k((t))\{X_1,\ldots,X_n\}\{(1-\varepsilon)^{-1}Y_1,\ldots,(1-\varepsilon)^{-1}Y_m\}}{I}.
$$
Similarly, by \ref{example_affine_punctured_space} we can write $\Xeta\setminus V(t)$ as an increasing union $\Xeta\setminus V(t)=\bigcup_{0<\varepsilon\leq1/2}W_{t,\varepsilon}$, where $W_{t,\varepsilon}$ is the subspace of $\Xeta$ cut out by $\varepsilon\leq|t|\leq1-\varepsilon$ and $|Y_i|\leq1-\varepsilon$; it is an affinoid domain of $\Xeta$ with associated affinoid $k$-algebra
\[
\frac{{k\{\varepsilon t^{-1},(1-\varepsilon)^{-1}t\}\{X_1,\ldots,X_n\}\{(1-\varepsilon)^{-1}Y_1,\ldots,(1-\varepsilon)^{-1}Y_m\}}}{I}.
\]
For every $\varepsilon$ such that $\varepsilon<r<1-\varepsilon$, we get a map $U_\varepsilon\to W_{t,\varepsilon}$ that is a homeomorphism onto $f^{-1}(r)\cap W_{t,\varepsilon}$ and that coincides with the restriction  to $U_\varepsilon$ of the map $\X_t^\beth\to f^{-1}(r)$ defined in \ref{projection_structure}. 
We deduce that $\X_t^\beth\to f^{-1}(r)$ is a homeomorphism , and it induces an isomorphism of $k((t))$-analytic spaces because, since $f^{-1}(r)\cap W_{t,\varepsilon}$ can be identified with the subspace of $f^{-1}(r)$ defined by the inequalities $|Y_i|\leq1-\varepsilon$, which is an affinoid domain with associated affinoid $k((t))$-algebra $\calO_{W_{t,\varepsilon}}(W_{t,\varepsilon})\hat\otimes_{\calA_\varepsilon}\rescompl{r}\cong\calO_{U_\varepsilon}(U_\varepsilon)$, it identifies the affinoid domains of $\X_t^\beth$ with the ones of $f^{-1}(r)$.
\end{proof}

\pa{ \label{projection_structure2}
 The morphism $\X\to\Spf{(k[[t]])}$ of \ref{projection_structure} is adic if and only if $\X_t$ is locally of finite type over $k[[t]]$, so in general it does not induce a morphism between punctured Berkovich spaces.
However, we get an $\I$-equivariant morphism of $k$-analytic spaces $f|_{\X^\beth\setminus V(t)}:{\X^\beth\setminus V(t)}\to\Spf(k[[t]])^*$.
Since $f|_{\X^\beth\setminus V(t)}$ is equivariant and $\Spf\big(k[[t]]\big)^*\cong\left]0,1\right[$ consists of a unique $\I$-orbit, for any $r\in]0,1[$ we have a homeomorphism $f^{-1}(r)\cong \big(\X^\beth\setminus V(t)\big)/(\I)$, and the latter is homeomorphic to ${\T}\setminus V(t)$.
As a consequence of the results above, we obtain the following important theorem, which is purely formal and relies essentially on the structure of analytic domains obtained in Proposition~\ref{G-admissible_strip} and on the computation of Lemma~\ref{lemma_tensor}.
Recall that we have a quotient map $\pi:\Xeta\to\T$, defined in \ref{def_quotient_map}, and a forgetful functor $\forg:\big(An_{k((t))}\big)\to\calC$, discussed in \ref{def_forgetful_functor}.
}

\begin{thm} \label{thm_structure_arboretum}
Let $\X_t$ be a special formal scheme over $k[[t]]$ and choose $0<r<1$. 
Then, via the isomorphism of Lemma~\ref{lemma_omeo}, $\pi|_{f^{-1}(r)}:f^{-1}(r)\to\T\setminus V(t)$ induces an isomorphism between $\forg(\X^\beth_t)$ and $\T\setminus V(t)$ in $\calC$.
\end{thm}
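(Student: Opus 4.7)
The proof decomposes into three tasks: comparing the underlying topological spaces, comparing $G$-topologies (i.e.\ analytic domains), and comparing structure sheaves together with their subsheaves of bounded-by-one functions. The topological homeomorphism has already been recorded in \ref{projection_structure2}: since $f|_{\X^\beth \setminus V(t)}$ is $\I$-equivariant and $\Spf(k[[t]])^*$ is a single $\I$-orbit, we get $f^{-1}(r) \cong (\X^\beth \setminus V(t))/\I \cong \T \setminus V(t)$ as topological spaces, with $\pi|_{f^{-1}(r)}$ as the identifying map.

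Since both the normalized space and the Berkovich space functors glue over open affine covers of $\X$, I would first reduce to the case where $\X$ is affine. For the $G$-topology, I would invoke Corollary~\ref{G-admissible_strip}: an analytic domain of $\T \setminus V(t)$ is exactly the $\pi$-image of an $\I$-invariant analytic domain $U$ of $\Xeta \setminus V(t)$, and such a $U$ admits a $G$-covering by strict affinoid subdomains $U_{i,\varepsilon}$ of the $W_{t,\varepsilon}$. Under the isomorphism $W_{t,\varepsilon} \cap f^{-1}(r) \cong U_\varepsilon \subset \X^\beth_t$ from Lemma~\ref{lemma_omeo}, each $U_{i,\varepsilon} \cap f^{-1}(r)$ is a strict affinoid of $U_\varepsilon$, and together they $G$-cover the corresponding analytic domain of $\X^\beth_t$. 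Conversely, every analytic domain of $\X^\beth_t$ is $G$-covered by strict affinoids inside the $U_\varepsilon$, which lift via $\I$-orbits to $\I$-invariant analytic domains of $\Xeta \setminus V(t)$, yielding a bijection between the two $G$-topologies.

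For the structure sheaf $\OO$, it suffices to check compatibility on these affinoid pieces and then pass to inverse limits along the $G$-coverings. The affinoid algebra of $U_{i,\varepsilon}$ inside $\Xeta$ is $\calU_{i,\varepsilon}$, while that of $U_{i,\varepsilon} \cap f^{-1}(r)$ viewed inside $\X^\beth_t$ is $\calU_{i,\varepsilon}\,\hat\otimes_{\calA_\varepsilon} K_r$; Lemma~\ref{lemma_tensor} asserts that these are canonically isomorphic as $k$-algebras (though not as Banach algebras), which is precisely what is needed for the comparison inside $\calC$, where Banach norms are forgotten. For the subsheaf $\OOO$, observe that a section $f \in \OO_\T(V)$ lies in $\OOO_\T(V)$ iff $|f(x)| \le 1$ for every $x \in \pi^{-1}(V)$; using $|f(\gamma x)| = |f(x)|^\gamma$ for $\gamma \in \I$, this condition is equivalent to demanding $|f(x)| \le 1$ at every orbit representative in $\pi^{-1}(V) \cap f^{-1}(r)$, which matches the defining condition of $\OOO_{\X^\beth_t}$ on the corresponding analytic domain.

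The main obstacle is verifying sheaf compatibility on general, non-affinoid analytic domains: one must check that the affinoid-wise $k$-algebra identifications provided by Lemma~\ref{lemma_tensor} glue coherently across the $G$-covers supplied by Corollary~\ref{G-admissible_strip}, and that locality of the sheaves (which is a condition on the usual topology) is preserved by the identification. What makes this go through is precisely that Lemma~\ref{lemma_tensor} produces isomorphisms of underlying $k$-algebras, discarding the Banach norms which do genuinely differ on the two sides, and that the category $\calC$ is engineered to ignore exactly this extra analytic structure.
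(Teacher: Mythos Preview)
Your proposal is correct and follows essentially the same approach as the paper: reduce to the affine case, use \ref{projection_structure2} for the topological homeomorphism, Corollary~\ref{G-admissible_strip} (in both directions) for the $G$-topology comparison, Lemma~\ref{lemma_tensor} on each strict affinoid piece for the structure sheaf, and the orbit argument from \ref{bounded_functions_on_T} for $\OOO$. The paper makes the sheaf computation slightly more explicit, writing $\OO_{\Xeta}(U_i)=\varprojlim_\varepsilon \OO_{\Xeta}(U_{i,\varepsilon})$ and applying Lemma~\ref{lemma_tensor} term-by-term before passing to the limit, which dissolves the ``gluing obstacle'' you flag in your last paragraph; but this is exactly what you mean by ``pass to inverse limits along the $G$-coverings,'' so there is no real divergence.
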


\begin{proof}
Without loss of generality we can assume that $\X_t$ is affine and $\X_t^\beth$ is nonempty (if $\X_t^\beth$ is empty then $t$ is identically zero on $\Xeta$, hence $\T\setminus V(t)$ is empty as well).
We endow $k((t))$ with the $t$-adic absolute value such that $|t|=r$ and identify $\X_t^\beth$ with $f^{-1}(r)$ via Lemma~\ref{lemma_omeo} (although it will follow from the theorem that $\forg(\X_t^\beth)$ will not depend on the choice of $r$).
We will prove that the continuous map $\varphi:\X^\beth_t\stackrel{\sim}{\longrightarrow} \T\setminus V(t)$ obtained from Lemma~\ref{lemma_omeo} and \ref{projection_structure2} can be upgraded to an isomorphism in $\calC$. 
Observe that, once $\X_t^\beth$ is identified with $f^{-1}(r)$, the map $\varphi$ is induced by the projection $\pi$, and therefore it is a morphism of locally ringed $G$-topological spaces.
The map $f:\X^\beth\to\Spf\big(k[[t]]\big)^\beth$ of~\ref{projection_structure} is equivariant and therefore, since $\X_t^\beth$, and hence $\Xeta\setminus V(t)$, is nonempty, for every $0<\varepsilon<1/2$ it induces a surjective morphism from $W_{t,\varepsilon}$ to the affinoid domain $A_\varepsilon=\m{\calA_\varepsilon}\cong\left[\varepsilon,1-\varepsilon\right]$ of $\big(\Spf(k[[t]])\big)^\beth\cong\left[0,1\right[$.
The corresponding morphism $\calA_\varepsilon\to\calW_{t,\varepsilon}$ is the unique $k$-morphism sending $t$ to $t$, and it endows $\calW_{t,\varepsilon}$ with the structure of a strict affinoid algebra over $\calA_\varepsilon$.
To see that $\varphi$ is $G$-continuous we have to show that $\pi^{-1}(U)\cap \X_t^\beth$ is an analytic domain of $\X^\beth_t$ whenever $U\subset\T\setminus V(t)$ is such that $\pi^{-1}(U)$ is an analytic domain of $\Xeta$.
Using Proposition~\ref{G-admissible_strip}, we write $\pi^{-1}(U)$ as $\cup U_i$, with $U_{i,\varepsilon}=U_i\cap W_{t,\varepsilon}$ strict affinoid subdomain of $W_{t,\varepsilon}$ for every $i$ and every $\varepsilon$ small enough. 
Following the isomorphism of Lemma~\ref{lemma_omeo}, we deduce that $\pi^{-1}(U)\cap \X_t^\beth$ is $G$-covered by the affinoid domains $U_{i,\varepsilon}\times_{A_\varepsilon}\m{K_r}$ of $\X_t^\beth$, and is therefore an analytic domain of $\X^\beth_t$.
Moreover, if we choose $\varepsilon$ small enough, each $U_{i,\varepsilon}$, being strict over $W_{t,\varepsilon}$ that is strict over $A_\varepsilon$, is itself strict over $A_\varepsilon$. 
We deduce that, as $k$-algebras,
\begin{align*}
\OO_\Xeta(U_i)& = \varprojlim_\varepsilon\OO_\Xeta(U_{i,\varepsilon}) = \varprojlim_\varepsilon \big(\OO_\Xeta(U_{i,\varepsilon})\hat\otimes_{\calA_\varepsilon}K_r\big)
\\
& = \varprojlim_\varepsilon\OO_{\X_t^\beth}\big(U_{i,\varepsilon}\cap \X_t^\beth\big)
\\
& = \OO_{\X_t^\beth}\big(U_i\cap \X_t^\beth\big),
\end{align*}
where the second equality is given by Lemma~\ref{lemma_tensor}.
Since the $U_i$ form a $G$-cover of $U$, it follows that we have an isomorphism of $k$-algebras
$$
\OO_{\T\setminus V(t)}(U)\cong\OO_{\Xeta}\big(\pi^{-1}(U)\big)\cong\OO_{\X^\beth_t}\big(\pi^{-1}(U)\cap \X_t^\beth\big).
$$
Moreover, we have also $\OOO_{\T\setminus V(t)}(U)\cong\OOO_{\X^\beth_t}\big(\pi^{-1}(U)\cap \X_t^\beth\big)$, because, as noted in~\ref{bounded_functions_on_T}, an analytic function is bounded by $1$ at a point $x$ of $\X_t^\beth$ if and only if it is bounded by $1$ at all the points of the orbit $\I\cdot x$.
It remains to show that $\varphi$ is a homeomorphism of $G$-sites, i.e. that whenever $U$ is an analytic domain of $\X_t^\beth$ then $\varphi(U)$ is an analytic domain of $\T$. 
If $V$ is an affinoid domain of $\X_t^\beth$, then it is also an affinoid domain of $\Xeta$, hence the same reasoning as in the proof of Proposition~\ref{G-admissible_strip} shows that $\pi^{-1}\big(\varphi(U)\big)=\I\cdot U$ is an analytic domain of $\Xeta$, therefore $\varphi(U)$ is an analytic domain of $\T$.
\end{proof}

\begin{cor}\label{locally_analytic}
Let $\X$ be a special formal scheme over $k$. Then the normalized Berkovich space $\T$ of $\X$ is $G$-locally isomorphic in the category $\calC$ to an object in the image of $\forg:\big(An_{k((t))}\big)\to\calC$. 
\end{cor}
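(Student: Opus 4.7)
The plan is to derive the corollary directly from Theorem~\ref{thm_structure_arboretum}, which already provides the desired local analytic structure whenever $\X$ carries a morphism to $\Spf(k[[t]])$. The remaining task is thus to introduce such a structure locally on $\X$, in a way that produces enough open pieces to cover $\T$.

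First I would reduce to the affine case: since $\T$ is glued from the $T_\calU$ for $\calU$ ranging over an affine open cover of $\X$, it suffices to treat $\X=\Spf(A)$ with
\[
A = k\{X_1,\ldots,X_n\}[[Y_1,\ldots,Y_m]]/I
\]
a special $k$-algebra whose largest ideal of definition is generated by (the images of) $Y_1,\ldots,Y_m$. If $m=0$ then $A$ is topologically of finite type over $k$, so $\X^*=\emptyset$ and there is nothing to prove; assume therefore $m\geq 1$.

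Next I would produce a convenient open cover of $\T$. A point $x\in\X^\beth$ belongs to $\X_0^\beth$ precisely when every generator of the largest ideal of definition vanishes at $x$, so $\X^*=\bigcup_{i=1}^{m}\big(\X^*\setminus V(Y_i)\big)$. Since $\pi$ is open by Lemma~\ref{L: normalized hausdorff}, taking images under $\pi$ yields an open cover $\T=\bigcup_{i=1}^{m}\big(\T\setminus V(Y_i)\big)$.

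Finally, for each fixed $i$ I would set $t:=Y_i$ and invoke the construction recalled just before Example~\ref{example_algebraic}: since $t$ is topologically nilpotent in $A$, it induces a canonical morphism $k[[t]]\to A$ making $A$ into a special $k[[t]]$-algebra and hence $\X$ into a special formal $k[[t]]$-scheme $\X_t$. Theorem~\ref{thm_structure_arboretum} then provides an isomorphism $\forg(\X_t^\beth)\cong \T\setminus V(t)$ in $\calC$, identifying the open piece $\T\setminus V(Y_i)$ with a $k((t))$-analytic space via $\forg$. As the $\T\setminus V(Y_i)$ cover $\T$, this completes the proof. The substantial work is entirely contained in Theorem~\ref{thm_structure_arboretum}; no genuine obstacle remains here beyond this bookkeeping, and in particular it is this step that clarifies why the various pointwise $k[[t]]$-structures are not expected to glue to a global one, since the choice of generator $Y_i$ is not canonical.
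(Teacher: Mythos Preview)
Your proof is correct and follows essentially the same approach as the paper: reduce to the affine case, choose generators of an ideal of definition (the paper uses arbitrary generators $f_1,\ldots,f_s$ rather than the $Y_i$ from a fixed presentation, but this is immaterial), use each to endow $\X$ with a $k[[t]]$-structure, and apply Theorem~\ref{thm_structure_arboretum} to identify $\T\setminus V(f_i)$ with $\forg(\X_{f_i}^\beth)$. One small slip: the images of the $Y_i$ generate \emph{an} ideal of definition but not necessarily the \emph{largest} one (which is its radical); this does not affect the argument, since any ideal of definition suffices for the covering $\X^*=\bigcup_i(\X^*\setminus V(Y_i))$.
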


\begin{proof}
Without loss of generality we can suppose that $\X=\Spf(A)$ is affine. 
Choose generators $(f_1,\ldots,f_s)$ for an ideal of definition of $\X$.
Each $f_i$ is topologically nilpotent in $A$ and so induces a morphism $\X\to\Spf\big(k[[t]]\big)$.
Then the ${\X^\beth\setminus V({f_i})}$ cover $\Xeta$, hence $\T$ is covered by the ${\big(\X^\beth\setminus V({f_i})\big)}/(\I)$ and by~\ref{thm_structure_arboretum} ${\big(\X^\beth\setminus V({f_i})\big)}/(\I)\cong {\T\setminus V({f_i})}\cong \forg\big(\X^\beth_{f_i}\big)$.
\end{proof}

\Pa{Remarks}{
If $\X_t$ is a formal scheme of finite type over $k[[t]]$ then $t$ does not vanish on $\T$, and so once we have chosen a $t$-adic absolute value on $k((t))$ Theorem~\ref{thm_structure_arboretum} identifies $\T$ with the image in $\calC$ of the $k((t))$-analytic space $\X_t^\beth$.
Note that the local $k((t))$-analytic structures that we obtain in Corollary~\ref{locally_analytic} are far from being unique.
Indeed, not only we have to make choices of $|t|\in\left]0,1\right[$, but we have to choose an affine cover of $\X$ and generators of the ideals of definition of the elements of this covers.
Nonetheless, for the sake of simplicity we will ofter refer to this result by saying that normalized $k$-spaces are $G$-locally $k((t))$-analytic spaces.
}

\Pa{Example}{\label{analytic_structure_valuative_tree}
If $\X$ is the formal scheme $\Spf\big(\mathbb C[[X,Y]]\big)$, then its normalized Berkovich space $\T$ is the valuative tree of \cite{valtree}, as observed in Example~\ref{valtree}. 
The largest ideal of definition of $\mathbb C[[X,Y]]$ is $(X,Y)$, so $\T$ is the union of the two $k((t))$-analytic curves $\X^\beth_X$ and $\X^\beth_Y$, both isomorphic to the $1$-dimensional open analytic disc over $k((t))$. 
It's important to remark that on their intersection, which is $\T\setminus V(XY)$, the two $k((t))$-analytic structures do not agree, because $t$ is sent to $X$ for one of them and to $Y$ for the other.
Actually, more is true: we will show in Example~\ref{analytic_structure_valuative_tree_continued} that there is no $k((t))$-analytic space $C$ such that $\T\cong\forg(C)$.
Observe that the complement of $\X^\beth_X$ in $\T$, which is the zero locus of $X$, consists of exactly one endpoint of the valuative tree, namely the order of vanishing at the origin along $X$ (it is a \defi{curve valuation} in the terminology of \cite[1.5.5]{valtree}).
}

\pa{\label{dejong_original}
Corollary~\ref{locally_analytic} gives us a way of proving some assertions about analytic spaces over trivially valued fields by reducing to the non-trivially valued case. 
We prove in this way the analogue for normalized spaces of a result of A.J. de Jong \cite[7.3.6]{deJ95}. 
De Jong shows that if $\X=\Spf A$ is a normal and affine special formal scheme flat over a complete discrete valuation ring $R$, then the formal functions on $\X$ coincide with the analytic functions bounded by $1$ on $\X^{\beth}$. 
More precisely, this result holds under weaker assumptions: it is enough for $A$ to be $R$-flat, reduced, and integrally closed in the ring $A\otimes_R\mbox{Frac}(R)$ (this was remarked in \cite[7.4.2]{deJ95} and proven in \cite[2.1]{MartinKappen15}).
We can deduce that the same is true in our setting:
}

\begin{cor}\label{dejong_normalized}
Let $A$ be a special $k$-algebra and assume that $A$ is admissible and normal. 
If we denote by $\X$ the formal scheme $\Spf A$, then the canonical morphism $A\to\OOO_\T(\T)$ is an isomorphism.
\end{cor}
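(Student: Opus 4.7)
\emph{Approach.} The plan is to apply de Jong's theorem, as recalled in~\ref{dejong_original}, on each piece of a suitable open cover of $\T$, using the local $k((t))$-analytic structure furnished by Corollary~\ref{locally_analytic}. A preliminary decomposition afforded by normality reduces matters to the case where $A$ is a domain, which is what makes the flatness hypothesis of de Jong's theorem available.

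\emph{Reduction to a normal admissible domain.} Since $A$ is normal and noetherian, it decomposes as a finite product $A \cong \prod_{j=1}^r A_j$ of normal domains, and this induces compatible disjoint union decompositions of $\X$, of $\Xeta$, and therefore of $\T$ and of its global sections. The admissibility of $A$ forces each $A_j$ to have nontrivial largest ideal of definition, since any component on which $J$ acted trivially would be contained in $\Tor_A$; hence each $A_j$ is itself an admissible normal domain. One thus reduces to the case in which $A$ is a normal admissible domain.

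\emph{Applying de Jong on each piece.} Choose generators $f_1, \ldots, f_s$ of the largest ideal of definition $J$ of $A$; each of them is nonzero. Each $f_i$ endows $A$ with the structure of a special $k[[t]]$-algebra via $t \mapsto f_i$, and since $A$ is a domain it is $k[[t]]$-flat for this structure; denote by $\X_{f_i}$ the resulting special formal $k[[t]]$-scheme. By Theorem~\ref{thm_structure_arboretum}, the open subspace $T_i := \T \setminus V(f_i)$ is isomorphic in $\calC$ to $\forg(\X_{f_i}^\beth)$. Moreover, being a normal domain, $A$ is integrally closed in its fraction field and hence in the localization $A \otimes_{k[[t]]} k((t)) = A[f_i^{-1}]$. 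The hypotheses of de Jong's theorem are therefore satisfied, and yield
\[
A \;\cong\; \Gamma\bigl(\X_{f_i}^\beth, \OOO_{\X_{f_i}^\beth}\bigr) \;\cong\; \Gamma(T_i, \OOO_\T).
\]
The same reasoning applied with $t \mapsto f_i f_{i'}$ identifies $\Gamma(T_i \cap T_{i'}, \OOO_\T)$ with $A$.

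\emph{Gluing, and the main obstacle.} Since no point of $\Xeta$ annihilates the full ideal of definition $J$, the $T_i$ cover $\T$, and the sheaf property of $\OOO_\T$ writes $\Gamma(\T, \OOO_\T)$ as the equalizer of the restriction maps $\prod_i \Gamma(T_i, \OOO_\T) \rightrightarrows \prod_{i,i'} \Gamma(T_i \cap T_{i'}, \OOO_\T)$. By the isomorphisms above, all the terms in this diagram are canonically identified with $A$ via the structural map, so the equalizer is $A$ itself. The most subtle point is precisely the reduction to the domain case: admissibility as defined in~\ref{definition_admissible_algebra} only provides $\bigcap_i \Tor_{f_i}(A) = 0$ rather than the individual vanishings $\Tor_{f_i}(A) = 0$ needed to apply de Jong's flatness hypothesis directly to $A$, and so the splitting into normal domains afforded by the normality assumption is essential to make the argument run.
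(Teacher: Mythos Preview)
Your proof is correct and follows essentially the same approach as the paper's: cover $\T$ by the opens $\T\setminus V(t)$ using Theorem~\ref{thm_structure_arboretum}, apply de Jong's theorem on each piece, and conclude by the sheaf property. Your explicit reduction to the case where $A$ is a domain is in fact more careful than the paper, which simply asserts ``$A$ has no $t$-torsion since it is a domain'' without first decomposing a general normal ring into its integral factors.
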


\begin{proof}
Since $A$ is admissible, it has a nonzero ideal of definition and $\T$ is non-empty.
By working separately on the connected components of $\X$ we can assume that it is connected and so $A$, being normal, is a domain.
Since $A$ is normal we have $A=\cap R$ for $R$ ranging among the valuation rings of rank one of the total ring of fractions of $A$, so the canonical morphism $A\to\OOO_\T(\T)$ is an inclusion.
If $t$ is a nonzero element of an ideal of definition of $A$, then $A$ is flat over $k[[t]]$ and, since $A$ is normal, it is integrally closed in $A[t^{-1}]$.
Then de Jong's theorem applies to $\X_t=\Spf{A}$, seen as a special formal scheme over $k[[t]]$, yielding $A\stackrel{\sim}{\to}\OOO_{\X_t^\beth}\big(\X_t^\beth\big)=\OOO_\T(U)$, where $U=\pi\big(\Xeta\setminus V(t)\big)$ is the subspace of $\T$ isomorphic to $\X^\beth_t$.
Since $t$ is not a zero divisor in $A$, then $V(t)$ is a thin subset of $\Xeta$ in the sense of \cite[\S3.3]{Ber90}, so the restriction $\OOO_\Xeta\left({\Xeta}\right)\to\OOO_\Xeta\big(\Xeta\setminus V(t)\big)$ is injective by \cite[3.3.14]{Ber90}.
From the chain of inclusions $A\hookrightarrow\OOO_\T(\T)\hookrightarrow\OOO_{T_{\X}}(U)=A$ we deduce that $A\cong\OOO_\T(\T)$.
\end{proof}

\pa{It $T$ is an object of $\calC$, we define the sheaf $\OOOO_T$ on $T$ as the subsheaf of $\OOO_T$ consisting of the sections that are non-invertible in every stalk. 
If $T=\T$ is the normalized space of a special formal scheme $\X$ over $k$ that is covered by the formal spectra of normal and admissible special $k$-algebras, then $\OOOO_T$ coincides with the sheaf $\OOOO_\T$ defined earlier. 
Moreover, in this case the largest ideal of definition of $\X$ coincides with $(\Sp_{\X})^*\OOOO_\T$. 
Indeed, the elements of the largest ideal of definition of $\X$ are precisely the ones that are topologically nilpotent, and this property can be verified by looking at the absolute values at every point.
}

\Pa{Example (continued)}{\label{analytic_structure_valuative_tree_continued}
We have discussed in \ref{analytic_structure_valuative_tree} a cover of the valuative tree $\T=T_{\Spf(\mathbb C[[X,Y]])}$ by $\mathbb C((t))$-analytic curves.
Using Corollary~\ref{dejong_normalized} we now show that there is no global $\mathbb C((t))$-analytic structure on $\T$.
To see this, assume that $\T\cong\forg(C)$ for some $\mathbb C((t))$-analytic space $C$, where $k((t))$ is endowed with the $t$-adic absolute value such that $|t|=r$. 
Then the image of $t$ in $\OO_C(C)\cong\OO_\T(\T)$, which by abuse of notation we still denote by $t$, has to be a nowhere vanishing function that is strictly bounded by 1 (because it takes the constant value $r<1$ on $C$), hence in particular an element of $\OOOO_\T(\T)$. 
Since $\OOOO_\T(\T)$ coincides with the largest ideal of definition of $\OOO_\T(\T)\cong$, which by Corollary~\ref{dejong_normalized} is isomorphic to $\mathbb C[[X,Y]]$, the element $t$ is a complex power series in $X$ and $Y$ with no constant term and therefore defines the germ of a curve at the origin of the affine plane $\mathbb A^2_\C$. 
Then the order of vanishing at the origin along this germ 
defines a point of $\T$ on which $t$ vanishes, giving a contradiction.
}

\begin{rem}
There are other examples of subspaces of analytic spaces that are naturally analytic spaces locally but do not have a canonical field of definition.
This is the case for the analytic boundaries of affinoid domains.
This kind of behavior appears for example in \cite[Lemme 3.1]{Ducros12}.
\end{rem}

We conclude the section by applying the results we have obtained to the study of admissible formal $k$-schemes. We start with an easy lemma.

\begin{lem}\label{lem_admissible_1}
Let $A$ be a special $k$-algebra. 
Then the morphism of formal schemes $\Spf(A/A_{\tors})\to\Spf A$ induced by the quotient $\pi:A\to A/A_{\tors}$ gives an isomorphism on the level of normalized Berkovich spaces.
\end{lem}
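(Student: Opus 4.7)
The plan is to prove the stronger statement that the morphism of punctured Berkovich spaces $j\colon\Spf(A/\Tor_A)^*\to\Spf(A)^*$ induced by $\pi$ is already an isomorphism of $k$-analytic spaces. Since $j$ is $\I$-equivariant, passing to the quotient by the $\I$-action on both sides will immediately yield the desired isomorphism of normalized spaces in $\calC$.

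To prove that $j$ is an isomorphism, I would fix a presentation $A\cong k\{X_1,\ldots,X_n\}\lbrack\lbrack Y_1,\ldots,Y_m\rbrack\rbrack/I$ so that the largest ideal of definition of $A$ is $J=(Y_1,\ldots,Y_m)$, and inherit the analogous presentation for $A/\Tor_A$ via $\pi$. Following~\ref{example_affine_punctured_space}, the space $\Spf(A)^*$ is $G$-covered by the affinoid domains $W_{Y_i,\varepsilon}$ whose affinoid algebras $\calW_{Y_i,\varepsilon}$ contain the element $\varepsilon Y_i^{-1}$; in particular, $Y_i$ is a unit in each $\calW_{Y_i,\varepsilon}$. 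The map $j$ is a closed immersion (being induced by a surjection of special $k$-algebras), and on each $W_{Y_i,\varepsilon}$ it is induced by the corresponding surjection of affinoid algebras coming from $\pi$, so it suffices to show that each such surjection of affinoid algebras is an isomorphism.

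The key observation is that every element $a\in\Tor_A$ has zero image in $\calW_{Y_i,\varepsilon}$: by the characterization $\Tor_A=\bigcap_{j=1}^m\Tor_{Y_j}(A)\subseteq\Tor_{Y_i}(A)$ recalled in~\ref{admissible_equivalence}, some power $Y_i^n$ annihilates $a$ in $A$, and the invertibility of $Y_i$ in $\calW_{Y_i,\varepsilon}$ then forces the image of $a$ to vanish. Consequently the surjection of affinoid algebras has trivial kernel and is an isomorphism, and gluing these local isomorphisms over the $G$-cover gives the claim. The mildest subtlety to address is the verification that the kernel of the surjection $\calW_{Y_i,\varepsilon}(A)\twoheadrightarrow\calW_{Y_i,\varepsilon}(A/\Tor_A)$ is indeed (topologically) generated by the image of $\Tor_A$, so that its vanishing on the generators suffices; this amounts to the standard compatibility of the construction of~\ref{example_affine_punctured_space} with surjections of special $k$-algebras, but it is the one point that does not reduce to a mere check on points.
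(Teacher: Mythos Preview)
Your proof is correct and rests on the same key observation as the paper's: since $\Tor_A\subseteq\Tor_{g}(A)$ for every generator $g$ of the largest ideal of definition, the ideal $\Tor_A$ dies on any piece of the space where such a $g$ is invertible. The paper packages this slightly differently: rather than working with the affinoid cover $\{W_{Y_i,\varepsilon}\}$ of the punctured space $\Xeta$, it invokes the local structure result (Corollary~\ref{locally_analytic}) to cover the normalized space $\T$ by the $k((t))$-analytic spaces $\X^\beth_{g_i}$ and then appeals to the standard fact that the Berkovich generic fiber of a special formal $k[[t]]$-scheme is insensitive to $t$-torsion. Your route is more self-contained in that it does not require Theorem~\ref{thm_structure_arboretum}, while the paper's route is shorter once that theorem is in hand; the underlying idea is the same.

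As for the subtlety you flag: the affinoid $k$-algebras $\calW_{Y_i,\varepsilon}$ are noetherian, so every ideal is closed and there is no distinction between ``generated'' and ``topologically generated''. Concretely, writing both affinoid algebras as quotients of the same free affinoid algebra by (the extensions of) the ideals $I\subset I'$ presenting $A$ and $A/\Tor_A$, the kernel of your surjection is the ordinary ideal generated by the image of $I'/I=\Tor_A$, and you have already shown each such generator maps to zero.
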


\begin{proof}
Let $\{g_1,\ldots,g_s\}$ be a set of generators of an ideal of definition of $A$ and denote by $\X$ and $\X'$ the formal spectra of $A$ and $A/A_{\tors}$ respectively. Then $\T$ is covered by the Berkovich spaces $\X^\beth_{g_i}$, the space $T_{\X'}$ is covered by the $(\X')^\beth_{g_i}$ and the morphism $f:T_{\X'}\to\T$ induced by $\pi$ is locally the morphism of Berkovich spaces induced by the morphism of special formal $k[[t]]$-schemes $(\X')_{g_i}\to\X_{g_i}$ coming from $\pi$. The latter is an isomorphism since every element of $A_{\tors}$ is $g_i$-torsion, hence $f$ is an isomorphism.
\end{proof}

We deduce the following result, which in turn implies that a formal $k[[t]]$-scheme of finite type is an admissible special formal $k$-scheme if and only if it is admissible in the classical sense.

\begin{prop}\label{admissible}
Let $\X=\Spf A$ be an affine special formal scheme over $k$. 
Then $\X$ is admissible if and only if the special $k$-algebra $A$ is admissible.
\end{prop}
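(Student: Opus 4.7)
The plan is to establish the equivalence by identifying the kernel of the canonical map $A\to\OO_\Xeta(\Xeta)$ with the torsion ideal $\Tor_A$ of Definition~\ref{definition_admissible_algebra}. Once this kernel computation is in hand, the global injectivity of $A\to\OO_\Xeta(\Xeta)$ becomes equivalent to $A$ being admissible, and applying the same argument to every basic affine open $\Spf A\{f^{-1}\}\subset\X$, which is again admissible by Remark~\ref{admissible_equivalence}, promotes the global injectivity to a monomorphism of sheaves $\OO_\X\to(\Sp_\X)_*\OO_\Xeta$, that is, to admissibility of $\X$ in the sense of Definition~\ref{definition_admissible_scheme}.

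The inclusion $\Tor_A\subseteq\ker\bigl(A\to\OO_\Xeta(\Xeta)\bigr)$ is essentially for free from Lemma~\ref{lem_admissible_1}. More precisely, its proof shows that the morphism $\Spf(A/\Tor_A)\to\X$ is an isomorphism not only on normalized spaces but already on the underlying punctured analytic spaces $\Xeta$, being locally the isomorphism of $k((t))$-analytic spaces $(\X')_{g_i}^\beth\to\X_{g_i}^\beth$ invoked there. In particular it induces an isomorphism on global sections of $\OO_\Xeta$, and since $A\to\OO_\Xeta(\Xeta)$ factors through $A\to A/\Tor_A$, every element of $\Tor_A$ is killed. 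This already settles the ``only if'' implication: if $\X$ is admissible then the global kernel vanishes and hence so does $\Tor_A$.

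For the reverse containment, I fix generators $g_1,\ldots,g_s$ of the largest ideal of definition of $A$. Since a point of $\X^\beth$ lies in $\X_0^\beth$ precisely when all $g_i$ vanish at it, the identity $\Xeta=\bigcup_{i=1}^s\bigl(\X^\beth\setminus V(g_i)\bigr)$ is an open cover, and the sheaf property yields
\[
\ker\bigl(A\to\OO_\Xeta(\Xeta)\bigr)=\bigcap_{i=1}^s\ker\bigl(A\to\OO_\Xeta(\X^\beth\setminus V(g_i))\bigr).
\]
For each $i$, viewing $\X$ as a special formal $k[[t]]$-scheme $\X_{g_i}$ via $t\mapsto g_i$, Theorem~\ref{thm_structure_arboretum} combined with the definition $\OO_\T=\pi_*\OO_\Xeta$ provides an identification $\OO_\Xeta(\X^\beth\setminus V(g_i))\cong\OO_{\X_{g_i}^\beth}(\X_{g_i}^\beth)$ of $k$-algebras compatible with the maps from $A$. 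The problem thereby reduces to the classical kernel statement that $A\to\OO_{\X_{g_i}^\beth}(\X_{g_i}^\beth)$ has kernel $\Tor_t(A)=\Tor_{g_i}(A)$; intersecting over $i$ and invoking $\Tor_A=\bigcap_i\Tor_{g_i}(A)$ from Remark~\ref{admissible_equivalence} completes the kernel computation.

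The main obstacle is this classical $k[[t]]$-level kernel statement. In the finite-type case it is part of Raynaud's correspondence between admissible formal schemes and $k((t))$-rigid analytic spaces; for general special formal $k[[t]]$-schemes it follows from de~Jong's theorem recalled in~\ref{dejong_original}, applied to the $t$-flatification $A/\Tor_t(A)$, which is flat over $k[[t]]$ and embeds into its ring of bounded analytic functions through the affinoid exhaustion of~\ref{example_affine_beth_space}. The same input is already implicit in the proof of Lemma~\ref{lem_admissible_1} when identifying $(\X')_{g_i}^\beth$ with $\X_{g_i}^\beth$, so the heart of the argument is simply a careful bookkeeping of the various local $k[[t]]$-structures.
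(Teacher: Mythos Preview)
Your proof is correct and follows essentially the same strategy as the paper: both arguments reduce to the local $k[[t]]$-structures via the cover $\Xeta=\bigcup_i(\X^\beth\setminus V(g_i))$, use Lemma~\ref{lem_admissible_1} for the inclusion $\Tor_A\subseteq\ker$, and the factorization through $A[g_i^{-1}]$ for the reverse inclusion. The paper organizes this as two separate implications rather than computing the kernel $\ker\bigl(A\to\OO_\Xeta(\Xeta)\bigr)=\Tor_A$ outright, but the content is identical, including the reduction to basic affine opens via Remark~\ref{admissible_equivalence}.

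One small imprecision: de~Jong's theorem as recalled in~\ref{dejong_original} requires the algebra to be integrally closed in its localization at $t$, and the $t$-flatification $A/\Tor_t(A)$ need not satisfy this. What you actually need is only the \emph{injectivity} of $A/\Tor_t(A)\to\OO_{\X_t^\beth}(\X_t^\beth)$, not the full isomorphism onto bounded functions; this is more elementary and follows directly from the affinoid exhaustion you invoke. The paper's own justification at this step is equally brief---it simply asserts that $A\otimes_{k[[t]]}k((t))\hookrightarrow\OO_{\X_t^\beth}(\X_t^\beth)$ ``because $t$ is invertible''---so your level of detail is comparable.
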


\begin{proof}
To prove the ``if'' part, since the topology on $\X$ is generated by affine open formal subschemes, it is enough to show that the map $\varphi:A\to\OO_{\Xeta}(\Xeta)$ is injective whenever $A$ is admissible; \ref{admissible_equivalence} will then allow to conclude. 
So let $a$ be an element of $A\setminus\{0\}$ such that $\varphi(a)=0$. Choose $t$ in the largest ideal of definition of $A$ such that $a\notin A_{t-\tors}$ and consider the following commutative diagram:
\begin{displaymath}
    \xymatrix@R=1.5pc@M=3pt@L=3pt{
        A \ar[d]_\varphi \ar[r]^(.33){\pi} & A\otimes_{k[[t]]}k((t))  \ar@<-4ex>@{^(->}[d] \\
        \OO_{\Xeta}\big(\Xeta\big) \ar[r]       & \OO_{\X_t^\beth}\big(\X_t^\beth\big)}
\end{displaymath}
where the bottom map is the restriction map under the identification of $\X_t^\beth$ with the subspace $\pi\big(\Xeta\setminus V(t)\big)$ of $\T$. 
The right vertical arrow is injective, as shown in \ref{canonical_injection_special_algebras}.
Since $a\notin A_{t-\tors}$ then $a$ is sent to a nonzero element by the top map, hence also $\varphi(a)$ is different from zero, which is what we had to prove.
Now, to prove the ``only if'' part, denote by $\pi$ the quotient map $A\to A/A_{\tors}$, by $\X'$ the formal spectrum of $A/A_{\tors}$, and consider the following commutative diagram:
\begin{displaymath}
    \xymatrix@C=2.5pc@R=1.5pc@M=3pt@L=3pt{
        A \ar[d]_\varphi \ar[r]^(.4)\pi & A/A_{\tors}  \ar[d]_\psi\\
        \OO_{\Xeta}\big(\Xeta\big) \ar[r]^(.48)\simeq       & \OO_{\X'^*}\big(\X'^*\big) }
\end{displaymath}
where the bottom arrow is induced by the morphism induced on formal spectra by $\pi$; it is an isomorphism thanks to Lemma~\ref{lem_admissible_1}. 
Since $\psi$ is injective from the previous part, it follows that $\varphi$ is injective only if $\pi$ is injective, that is only if $A_{\tors}=0$.
\end{proof}

\pa{\label{associated_admissible}
Let $\X$ be a special formal scheme over $k$, and let $\mathscr{T}_\X$ be the subsheaf of $\OO_\X$ such that $\mathscr T_\X(\Spf A)=A_{\tors}$ for every affine subscheme $\Spf A$ of $\X$. 
It's a coherent ideal subsheaf of $\OO_\X$, so we can consider the quotient $\OO_\X/\mathscr T_\X$.
 The special formal scheme $\X_{\mathrm{adm}}$ is defined as the closed formal subscheme of $\X$ defined by $\mathscr T$. 
It is an admissible special formal scheme over $k$ that we call the \defi{admissible special formal scheme associated with} $\X$. 
Its normalized Berkovich space coincides with the one of $\X$.
}

The following proposition is the analogue of an important classical result for formal $R$-schemes of finite type.

\begin{prop}\label{sp_surjective}
Let $\X$ be an admissible special formal $k$-scheme. 
Then the maps $\Sp_\X\colon\Xeta\to\X$ and $\Sp_\X\colon\T\to\X$ are surjective.
\end{prop}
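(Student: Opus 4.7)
The plan is to reduce to the affine case, use prime avoidance to produce a topologically nilpotent non-zero-divisor $t\in A$, view $A$ as a special $k[[t]]$-algebra, and then appeal to the classical surjectivity of the specialization map for admissible special formal schemes over a complete discrete valuation ring. Since $\pi\colon\Xeta\to\T$ is surjective, it suffices to prove that $\Sp_\X\colon\Xeta\to\X$ is surjective, and because the statement is local on $\X$ one may assume $\X=\Spf A$ with $A$ an admissible special $k$-algebra. Let $J$ be the largest ideal of definition of $A$ with generators $g_1,\ldots,g_s$.

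The key algebraic input is the reinterpretation of admissibility in terms of associated primes: for any $t\in A$ the $t$-torsion $\Tor_t(A)$ is the largest submodule of $A$ supported on $V(t)\subseteq\Spec A$, so it is nonzero precisely when some associated prime of $A$ contains $t$. Combining this with the identity $\Tor_A=\bigcap_i\Tor_{g_i}(A)$ from \ref{admissible_equivalence}, admissibility of $A$ is equivalent to the condition that no associated prime of $A$ contains $J$. Since $A$ is noetherian, prime avoidance applied to $J$ and the finitely many associated primes of $A$ then produces an element $t\in J$ that is a non-zero-divisor on $A$.

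Viewing $A$ as a special $k[[t]]$-algebra through this $t$ yields a special formal $k[[t]]$-scheme $\X_t$ with the same underlying formal scheme as $\X$; having no $t$-torsion, $\X_t$ is admissible over the CDVR $k[[t]]$ in the classical sense, so the classical surjectivity of the specialization map for admissible special formal schemes over a CDVR (cf.~\cite{deJ95} and \cite{Ber96}) yields that $\Sp_{\X_t}\colon\X_t^\beth\to\X_t$ is surjective. Using Lemma~\ref{lemma_omeo} and \ref{projection_structure2}, for any $r\in\left]0,1\right[$ one identifies $\X_t^\beth$ with the fiber $f^{-1}(r)\subseteq\X^\beth$ of the morphism $f\colon\X^\beth\to\Spf(k[[t]])^\beth$; under this identification the two specialization maps agree, and since $t$ vanishes on $\X_0^\beth$ while $|t|=r>0$ throughout $\X_t^\beth$, one has $\X_t^\beth\subseteq\Xeta$, which concludes the argument.

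The main subtle point I expect is the associated-primes reformulation of admissibility: it is what makes prime avoidance applicable and reduces the trivially-valued statement to the classical one over $k[[t]]$, after which the remainder of the proof is essentially a direct transfer along the local $k((t))$-analytic structure obtained in Corollary~\ref{locally_analytic}.
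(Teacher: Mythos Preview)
Your argument is correct, and it takes a genuinely different route from the paper's proof.

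The paper does not try to find a single parameter $t$ on the given affine $\X=\Spf A$. Instead it first replaces $\X$ by the admissible blowup of $\X$ along $\X_0$, so that the largest ideal of definition becomes locally principal; on each affine chart this forces the chart to be \emph{of finite type} over $k[[t]]$, and after passing to the integral closure in $A[t^{-1}]$ one can invoke de~Jong's theorem to identify $\X_0$ with the canonical reduction of the affinoid $\X_t^\beth$ and conclude by the surjectivity of Berkovich's reduction map \cite[2.4.4(i)]{Ber90}. Your approach is more algebraic: you reformulate admissibility as ``no associated prime of $A$ contains $J$'' (which is correct, since a nonzero $a\in\Tor_A$ has $J^N\subset\mathrm{Ann}(a)$ for some $N$, and any associated prime of the submodule $Aa\subset A$ then contains $J$), and use prime avoidance to produce a non-zero-divisor $t\in J$ directly, without blowing up.

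The trade-off is that your $t$ only makes $A$ into a \emph{special} $k[[t]]$-algebra, not one of finite type, so you must invoke surjectivity of $\Sp_{\X_t}$ for admissible special formal $k[[t]]$-schemes. This is true, but it is not quite stated in \cite{deJ95} or \cite{Ber96} in the generality you need; the usual argument for it over a CDVR again blows up along the reduction to pass to the finite-type case, which is precisely what the paper does over $k$. So your reduction is valid but pushes the geometric step into the cited black box, while the paper's proof is more self-contained, landing directly on the affinoid statement in \cite{Ber90}. On the other hand, your associated-primes reinterpretation of admissibility is a nice observation that the paper does not make explicit.
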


\begin{proof}
We can replace $\X$ by its formal blowup along $\X_0$, since the blowup morphism induces a surjective map between the reductions.
Indeed, without loss of generality we can assume that $\X=\Spf(A)$ is affine and since it is admissible then no component of $\Spec(A)$ is contained in $\X_0$.
Therefore the image of the scheme theoretic blowup of $\Spec(A)$ along $\X_0$, which is closed since the blowup is a proper map and has to contain the complement $\Spec(A)\setminus\X_0$, is all of $\Spec(A)$.
Note that the blown up formal scheme is still admissible, because its ideal of definition is locally principal, generated by a regular element.
By replacing $\X$ with an affine open formal subscheme whose ideal of definition is principal, generated by an element $t$, we can assume that $\X_t=\X=\Spf(A)$ is an affine formal scheme, flat and of finite type over $k[[t]]$.
Furthermore, we can assume that $A$ is integrally closed in $A[t^{-1}]$, since the morphism of formal schemes induced by taking the integral closure is surjective by \cite[V.2.1, Theorem 1]{Bourbaki-CommutativeAlgebra1-7}.
By \cite[2.1]{MartinKappen15} we have $A\cong\OOO_{\X_t^\beth}\big({\X_t^\beth}\big)$, therefore $(\X_t)_0=\X_0$ is the canonical reduction of the affinoid space $\X_t^\beth$, and the map $\Sp_{\X_t}\colon\X_t^\beth\to\X_0$ coincides with the reduction map of \cite[\S2.4]{Ber90}.
Therefore $\Sp_{\X_t}$ is surjective by \cite[2.4.4(i)]{Ber90}.
The surjectivity of $\Sp_\X\colon\Xeta\to\X$ follows from the fact that $\iota\circ\Sp_\X=\Sp_{\X_t}$, where $\iota\colon\X^\beth_t\to\Xeta$ is the natural inclusion of Lemma~\ref{lemma_omeo}.
\end{proof}


\section{Affinoid domains and atlases}
\label{section_3.3}

In this section we develop more thoroughly the analogy between normalized spaces over $k$ and analytic spaces over $k((t))$, by defining the class of affinoid domains of a normalized space and showing that they behave like the affinoid domains of analytic spaces. 
In particular, in Proposition~\ref{projection_of_affinoids} we show that the $G$-topology of a normalized space can be described in terms of its affinoid domains, and in Proposition~\ref{existence_atlas} we prove that normalized spaces are $G$-covered by finitely many affinoid domains.
Theorem~\ref{theorem_affinoids_intrinsic} shows that the property of being an affinoid domain of a normalized space is intrinsic, not depending on the choice of a $k((t))$-analytic structure.

\pa{
Let $V$ be an object of $\calC$. 
We say that $V$ is {\it affinoid} if it is isomorphic to $\forg(X)$ for some strictly affinoid $k((t))$-analytic space $X$.
A $G$-admissible subspace $V$ of an object $T$ of $\calC$ that is affinoid is said to be an {\it affinoid domain} of $T$.
}

\pa{
Equivalently, $V$ is affinoid if and only if it is isomorphic to the normalized space $\T$ of some affine formal $k[[t]]$-scheme of finite type $\X_t$, since in this case $\T\cong\forg\big(\X_t^\beth\big)$.
If we want to remember the element $t\in \OOO_V(V)$ that is the image of $t$ under the canonical homomorphism $k[[t]]\to \OOO_V(V)$, we say that $V$ is {\it affinoid with respect to the parameter} $t$, and by abuse of notation we will denote by $V_t$ both a strictly affinoid $k((t))$-analytic space whose image in $\calC$ is isomorphic to $V$ (a choice of $|t|\in\left]0,1\right[$ is implicit here) and the affinoid space $V$ itself.
Observe that the parameter $t$ is actually an element of $\OOOO_V(V)$.
}

\begin{rem}\label{remark_affinoid_not_canonical}
If $V$ is an object of $\calC$ that is affinoid with respect to two different parameters $t_1$ and $t_2$, it is not true in general that $V_{t_1}$ and $V_{t_2}$ are isomorphic as $k((t))$-analytic spaces. 
For example, the strictly affinoid $k((t))$-analytic spaces $\mathcal M\big({k((t))}\big)$ and $\mathcal M\big({k((t))\{X\}/(X^2-t)}\big)$ are not isomorphic since  $k((t))$ and $k((t))\{X\}/(X^2-t)$ are not isomorphic as $k((t))$-algebras, but the associated normalized spaces are isomorphic because $k((t))$ and $k((t))\{X\}/(X^2-t)\cong  k((X))$ are isomorphic as special $k$-algebras.
\end{rem}

The following proposition pushes further the analogies between usual Berkovich spaces and normalized spaces, showing that the $G$-topology of a normalized space can be described in terms of its affinoid domains.

\begin{prop}\label{projection_of_affinoids}
Let $\X$ be a special formal scheme over $k$ and let $U$ be an analytic domain of the normalized space $\T$ of $\X$. 
Then $U$ is $G$-covered by affinoid domains of $\T$.
\end{prop}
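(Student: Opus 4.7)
The plan is to reduce to the affine case and then use the local structure theorem (Corollary~\ref{locally_analytic}) to transfer the question to Berkovich spaces over $k((t))$. First, I cover $\X$ by affine open formal subschemes $\{\calU_i\}_i$. Each $T_{\calU_i}$ is an open subspace of $\T$, because $\calU_i^*$ is open and $\I$-invariant in $\Xeta$ and the quotient $\pi$ is open by Lemma~\ref{L: normalized hausdorff}; hence $\{U \cap T_{\calU_i}\}_i$ is a $G$-cover of $U$, and I may replace $U$ by $U \cap T_{\calU_i}$ and assume $\X = \Spf(A)$ is affine.

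Next I pick generators $f_1,\ldots,f_s$ of an ideal of definition of $A$. As in the proof of Corollary~\ref{locally_analytic}, the opens $\T \setminus V(f_j)$ cover $\T$, and by Theorem~\ref{thm_structure_arboretum} each is isomorphic in $\calC$ to $\forg(\X^\beth_{f_j})$, where $\X_{f_j}$ denotes $\X$ regarded as a special $k[[t]]$-scheme via $t \mapsto f_j$. Because these are isomorphisms in $\calC$, $G$-opens on one side correspond to $G$-opens on the other, so $U_j := U \cap (\T \setminus V(f_j))$ corresponds to an analytic domain of the $k((t))$-analytic space $\X^\beth_{f_j}$.

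The space $\X^\beth_{f_j}$ is strictly $k((t))$-analytic: by~\ref{example_affine_beth_space} it is built from closed polydiscs by cutting with inequalities $|Y_i| \leq 1-\varepsilon$, and by restricting to $\varepsilon$ such that $1-\varepsilon$ lies in $|k((t))^\times|=|t|^{\Z}$ one obtains a $G$-cover by strict affinoids. On any strictly $k((t))$-analytic space, every analytic domain is $G$-covered by strict affinoid subdomains — strict rational domains form a basis for the $G$-topology — and applying this to $U_j$ produces a family $\{V_{j,\alpha}\}_\alpha$ of strict affinoid subdomains which $G$-covers $U_j$ inside $\X^\beth_{f_j}$. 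Each $V_{j,\alpha}$, transported back to $\T$ via the isomorphism in $\calC$, is by definition an affinoid domain of $\T$, and the combined family $\{V_{j,\alpha}\}_{j,\alpha}$ provides the desired $G$-cover of $U$ by affinoid domains of $\T$.

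I expect the main obstacle to be the compatibility of the $G$-topologies between $\T$ and its local $k((t))$-analytic models; this is precisely what Theorem~\ref{thm_structure_arboretum} provides, since it furnishes an isomorphism in $\calC$ and hence preserves $G$-opens and $G$-coverings. The remaining ingredient, that strict affinoids form a basis of the $G$-topology of a strictly $k((t))$-analytic space over a non-trivially valued field, is classical in Berkovich theory and requires no special work in the present setting.
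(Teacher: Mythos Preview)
Your proof is essentially the paper's own argument: reduce to the affine case, cover $\T$ by the $k((t))$-analytic pieces $\T\setminus V(f_j)\cong\forg(\X_{f_j}^\beth)$ via Theorem~\ref{thm_structure_arboretum}, pull $U$ back to analytic domains there, $G$-cover those by strict affinoids, and push forward. One minor correction: $\calU_i^*$ is not open in $\Xeta$ for the ordinary topology (open formal subschemes yield \emph{closed} analytic subdomains, as noted in~\ref{definition_beth_space}), so your appeal to Lemma~\ref{L: normalized hausdorff} is misplaced; nonetheless $T_{\calU_i}$ is an analytic domain of $\T$ and the family $\{T_{\calU_i}\}$ still $G$-covers $\T$, so the reduction to the affine case goes through unchanged.
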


\begin{proof}
Since $\T$ is $G$-covered by the normalized spaces of the affine open formal subschemes of $\X$, we can assume that $\X$ is itself affine. 
Assume that $U$ is an analytic domain of $\T$, i.e. that $\pi^{-1}(U)$ is an analytic domain of $\Xeta$.
Cover $\T$ by the $k((t))$-analytic spaces $\X_{t_i}^\beth$, for $t_i$ ranging over a finite set of generators of an ideal of definition of $\X$, and set $U_{i}=\pi^{-1}(U)\cap\X_{t_i}^\beth$.
Now, each $U_{i}$ is an analytic domain of $\X_{t_i}^\beth$, so it is $G$-covered by affinoid domains $V_{i,j}$ of $\X_{t_i}^\beth$.
Therefore $\big\{\pi(V_{i,j})\big\}_j$ is a $G$-cover of $\pi(U_i)$ by affinoid domains of $\T$.
Since $\big\{\pi\big(\X_{t_i}^\beth\big)\big\}_i$ is a finite open cover of $\T$, the $\pi(U_{i})$ form a finite open cover of $U$, hence $\big\{\pi(V_{i,j})\big\}_{i,j}$ is a $G$-cover of $U$ by affinoid domains of $\T$.
\end{proof}

\begin{rem}
Proposition~\ref{projection_of_affinoids} tells us that we can think about the $G$-topology of $\T$ the same way we think about the one of a Berkovich space. 
If $U$ is a subset of $\T$, then $U$ is an analytic domain if and only if there exists a family $\{V_i\}_{i\in I}$ of affinoid domains of $\T$ contained in $U$ such that the following property holds: for every point $x$ of $U$, there exists a finite subset $I_x$ of $I$ such that $\bigcup_{i\in I_x}V_i$ contains an open neighborhood of $x$ and $x\in\bigcap_{i\in I_x}V_i$.
\end{rem}

\pa{
We showed in Corollary~\ref{locally_analytic} that the normalized Berkovich space $\T$ of a special formal $k$-scheme $\X$ is $G$-locally a $k((t))$-analytic space. 
We will now describe a second way of covering $\T$ by $k((t))$-analytic spaces that will be very useful later; the price to pay is that we are obliged to change the formal scheme $\X$.
If $T$ is an object of $\calC$, we define an \defi{atlas} of $T$ to be a $G$-cover of $T$ by affinoid domains.
}

\begin{prop}\label{existence_atlas}
Let $\X$ be a special formal $k$-scheme. 
Then the normalized Berkovich space $\T$ of $\X$ admits a finite atlas.
\end{prop}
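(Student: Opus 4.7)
The plan is to first replace $\X$ by an admissible formal blowup on which the largest ideal of definition becomes locally principal, reducing to the case where the normalized space is locally the normalized space of a formal $k[[t]]$-scheme of finite type, which is affinoid in $\calC$ by Theorem~\ref{thm_structure_arboretum}.

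Let $\calJ$ denote the largest ideal of definition of $\X$, and let $\pi \colon \X' \to \X$ be the admissible formal blowup of $\X$ along $\calJ$. By \ref{basic properties blowup}, $\X'$ is a special formal $k$-scheme, and by the universal property of the blowup $\calJ\OO_{\X'}$ is both invertible and an ideal of definition of $\X'$, so $\pi$ is adic. By \ref{invariance_generic_fiber} the induced morphism $(\X')^* \to \Xeta$ is an isomorphism of $k$-analytic spaces, and by the corollary following Lemma~\ref{definition_action} it is $\I$-equivariant, hence it descends to an isomorphism $T_{\X'} \cong \T$ in $\calC$. It therefore suffices to produce a finite atlas on $T_{\X'}$.

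Since $\X'$ is quasi-compact (being noetherian) and $\calJ\OO_{\X'}$ is invertible, I cover $\X'$ by finitely many affine opens $\calU_i = \Spf(A_i)$ with $\calJ A_i = (t_i)$ for some regular element $t_i \in A_i$. Then $(t_i)$ is an ideal of definition of $A_i$, so the morphism $\calU_i \to \Spf(k[[t_i]])$ sending $t$ to $t_i$ is adic; by the characterization recalled in Section~\ref{section_preliminaries}, $\calU_i$ is a formal $k[[t_i]]$-scheme of finite type, and hence $\calU_i^\beth$ is a strict $k((t_i))$-affinoid space. Noting that $t_i$ does not vanish on $T_{\calU_i}$, the remark following Theorem~\ref{thm_structure_arboretum} gives an isomorphism $T_{\calU_i} \cong \forg(\calU_i^\beth)$ in $\calC$, so each $T_{\calU_i}$ is affinoid.

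Finally, each $T_{\calU_i}$ is an analytic domain of $T_{\X'}$ since its preimage in $(\X')^*$ is the $\I$-invariant analytic domain $(\calU_i)^*$; and the finite family $\{T_{\calU_i}\}_i$ is a $G$-cover of $T_{\X'}$ because $\{(\calU_i)^*\}_i$ is a $G$-cover of $(\X')^*$, inherited from the finite open cover $\{\calU_i\}_i$ of $\X'$ via the functoriality of the $\beth$-construction. The main technical subtlety is ensuring that after the admissible blowup each affine piece becomes topologically of finite type over some $k[[t]]$; this hinges on the invertibility (hence local principality) of $\calJ\OO_{\X'}$ and on the equivalence between adicness over $\Spf(k[[t]])$ and finite-type-ness recalled in Section~\ref{section_preliminaries}.
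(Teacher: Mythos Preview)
Your proof is correct and follows essentially the same approach as the paper: blow up along the largest ideal of definition to make it locally principal, then cover by affine opens that are of finite type over some $k[[t]]$, and invoke Theorem~\ref{thm_structure_arboretum} (and its remark) to see that their normalized spaces are affinoid. The paper adds a preliminary step of replacing $\X$ by its associated admissible formal scheme (cf.\ \ref{associated_admissible}) before blowing up; your argument bypasses this because you use directly that $\calJ\OO_{\X'}$ is invertible, which already forces each local generator $t_i$ to be a non-zero-divisor. Two minor points: the fact that $\X'$ is special is the paragraph following \ref{basic properties blowup} rather than \ref{basic properties blowup} itself, and in the paper's notation the strict $k((t_i))$-affinoid should be written $(\calU_i)_{t_i}^\beth$.
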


\begin{proof}
By replacing $\X$ with the associated admissible special formal $k$-scheme, as defined in \ref{associated_admissible}, we can assume that $\X$ is admissible. 
Performing an admissible blowup $\X'\to\X$ we can assume that the largest ideal of definition of $\X$ is locally principal. 
Now, by locally sending $t$ to a generator of this ideal, we cover $\X$ by finitely many affine open formal schemes of finite type over $k[[t]]$. 
Their normalized spaces then form an atlas of $\T$.
\end{proof}

\begin{ex}
In the case of the valuative tree of Example~\ref{valtree}, we get an atlas by blowing up the origin of $\mathbb A^2_\C$ and using the two charts of the blowup.
\end{ex}


\pa{
If $X$ is a strict $k((t))$-affinoid space and $\X_t=\Spf(A)$ is a flat formal $k[[t]]$-scheme of finite type such that $\X_t^\beth\cong X$, then by flatness $A$ injects into $\calO_X(X)\cong A \otimes_{k[[t]]} k((t))$, and so $X$ is reduced if and only if $A$ is reduced.
%
%
}

\pa{
If $X$ is a reduced strict $k((t))$-affinoid space, then the algebra $\OOO_X(X)$ is topologically of finite type over $k[[t]]$ by the Grauert-Remmert finiteness theorem \cite[\S4, Endlichkeitssatz]{GrauertRemmert1966}, and we have isomorphisms of $k((t))$-analytic spaces $X \cong \mathcal M\big({\OOO_X(X)\hat\otimes_{k[[t]]}k((t))}\big) \cong \Spf\big(\OOO_X(X)\big)_t^\beth$.
If $\forg(X)$ is isomorphic to an object $V$ of $\calC$ then obviously $\OOO_X(X)\cong\OOO_V(V)$ as $k$-algebras.
Therefore, $V$ is reduced affinoid if and only if the $k$-algebra $\OOO_V(V)$ can be endowed with a structure of a reduced $k[[t]]$-algebra topologically of finite type and $V$ is isomorphic to 
$\forg\big(\Spf(\OOO_V(V))_t^\beth\big) \cong T_{\Spf(\OOO_V(V))}$ in $\calC$.
The special formal $k$-scheme $\Spf(\OOO_V(V))$ can then be seen as a \emph{canonical formal model} for $V$.
}


\pa{
Let $V$ be an affinoid object $\calC$ with respect to the parameter $t\in\OOOO_V(V)$.
We say that $V$ is \emph{distinguished} (with respect to the parameter $t$) if it is reduced and $\OOO_V(V)\otimes_{k[[t]]}k$ is reduced as well.
}

\pa{\label{distinguished_affinoid_equivalence}
Let $X=\calM(\calA)$ be a strictly affinoid $k((t))$-analytic space.
Then $\forg(X)$ is distinguished with respect to $t$ if and only if the affinoid algebra $\calA$ is distinguished in the classical sense, see \cite[\S6.4.3]{BGR}.
Indeed, $\calA$ is distinguished if and only if it is reduced and its spectral norm $|\cdot|_{\sup}$ takes values in $|k((t))|$ (this is \cite[6.4.3/1]{BGR}, since by \cite[3.6]{BGR} $k((t))$ is a stable field).
In particular, to show both implication we can assume that $X$ is reduced, and therefore its canonical model is $\X_t=\Spf\OOO_X(X)$.
Observe that $|\calA^\times|_{\sup}=|k((t))^\times|$ if and only if $t\OOO_X(X)=\big\{f\in\calA\,\big|\,|f|_{\sup}\leq|t|\big\}$ coincides with $\OOOO_X(X)=\big\{f\in\calA\,\big|\,|f|_{\sup}<1\big\}$.
It follows that if $\calA$ is distinguished then the special fiber $\X_{t,s}=\Spec\big(\OOO_X(X)/ t\OOO_X(X)\big)$ of $\X$ coincides with the usual reduction $\Spec\big(\OOO_X(X)/\OOOO_X(X)\big)$ of the affinoid $X$, which in particular proves that $\forg(X)$ is distinguished.
Conversely, assume that $|\calA|_{\sup}\supsetneq|k((t))|$, so that $\calA$ is not distinguished.
Then there exists an element $f\in \calA$ such that $|t|<|f|_{\sup}<1$.
This implies that there exists some $n>0$ such that $f^n\in t\OOO_X(X)$ while $f\in \OOO_X(X)\setminus t\OOO_X(X)$, hence $\X_{t,s}$ is not reduced, which proves that $\forg(X)$ is not distinguished.
This also shows that an affinoid object $V$ with respect to the parameter $t\in\OOOO_V(V)$ is distinguished with respect to $t$ if and only if $t$ is a generator of $\OOOO_V(V)$.
}

In the remaining part of the section we give a criterion for a reduced object of $\calC$ to be affinoid that does not require checking the existence of a parameter $t$, following the  approach of \cite{Liu90} for rigid analytic spaces.

\pa{\label{definition_pseudo-affinoid}
Let $X$ be a locally ringed $G$-topological space. 
Following \cite{Liu90}, we say that $X$ is a \defi{Stein space} if $\OO_X$ is coherent (considered as a sheaf of modules over itself) and we have $H^n(X,\mathscr F)=0$ for every coherent sheaf of $\OO_X$-modules $\mathscr F$ and every $n\geq1$.
\\
Let $T$ be an object of $\calC$. 
We say that $T$ is \defi{compact} if it is Hausdorff and every $G$-cover of $T$ has a finite $G$-subcover. 
Finally, we say that $T$ is \defi{pseudo-affinoid} if it is isomorphic to $\forg\big(\X_t^\beth\big)$ for some affine special formal scheme $\X_t$ over $k[[t]]$.
}

\begin{prop}\label{pseudo-affinoid_stein} Every pseudo-affinoid object of $\calC$ is a Stein space.
 If $X$ is a $k((t))$-analytic space, then $X$ is a $k((t))$-affinoid space if and only if $\forg(X)$ is both compact and pseudo-affinoid. 
\end{prop}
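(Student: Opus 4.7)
The first assertion claims that every pseudo-affinoid $T\cong\forg(\X_t^\beth)$ with $\X_t=\Spf(A)$ affine special over $k[[t]]$ is Stein. My plan is to exploit the exhaustion $\X_t^\beth=\bigcup_{0<\varepsilon<1/2}W_\varepsilon$ from \ref{example_affine_beth_space}, where the $W_\varepsilon$ are strict $k((t))$-affinoid subdomains, each $W_\varepsilon$ lies in the topological interior of $W_{\varepsilon'}$ for $\varepsilon'<\varepsilon$, and the interiors form an open $G$-cover of $\X_t^\beth$. Given a coherent sheaf $\mathscr F$ on $T$, Kiehl's theorem gives $H^n(W_\varepsilon,\mathscr F)=0$ for $n\geq 1$, and a Cech--to--derived--functor spectral sequence applied to this increasing cover reduces the computation of $H^n(T,\mathscr F)$ for $n\geq 1$ to the vanishing of $R^1\varprojlim_\varepsilon\mathscr F(W_\varepsilon)$. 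The crucial input is that the restriction maps $\mathscr F(W_{\varepsilon'})\to\mathscr F(W_\varepsilon)$ have dense image, which follows from the explicit presentations of the $\calW_\varepsilon$ recalled in \ref{example_affine_beth_space}: the transition maps are obtained by completing a common polynomial-like dense subring in progressively finer norms. Density yields the Mittag-Leffler condition, killing $R^1\varprojlim$ and hence $H^n(T,\mathscr F)$ for $n\geq 1$. This is the analogue of the rigid-analytic argument of \cite{Liu90}.

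The easy direction of the equivalence is immediate. If $X$ is strictly $k((t))$-affinoid then $\OOO_X(X)$ is a $k[[t]]$-algebra topologically of finite type, so $\X:=\Spf(\OOO_X(X))$ is an affine formal scheme of finite type over $k[[t]]$ (a fortiori special), and $X\cong\X_t^\beth$, showing that $\forg(X)$ is pseudo-affinoid. Compactness of $\forg(X)$ in the $G$-topological sense then follows from Tate's acyclicity theorem: any $G$-cover is refined by a finite one made of rational subdomains.

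For the converse, suppose $\forg(X)\cong\forg(\X_t^\beth)$ in $\calC$ with $\X_t=\Spf(A)$ affine special over $k[[t]]$, and that $\forg(X)$ is compact. The interiors of the chain $\{W_\varepsilon\}_\varepsilon$ form a $G$-cover of $\X_t^\beth$, so by compactness they admit a finite subcover; since the chain is totally ordered by inclusion, the interior of a single $W_{\varepsilon_0}$ already $G$-covers $\X_t^\beth$, which forces $\X_t^\beth=W_{\varepsilon_0}$. Hence the underlying $\calC$-object of $X$ is isomorphic to that of the strict affinoid $W_{\varepsilon_0}$, and in particular $\OO_X(X)\cong\calW_{\varepsilon_0}$ is a $k((t))$-affinoid algebra. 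The first part of the proposition provides that $X$ is Stein. Combining these two facts with the compactness of $X$, the argument of \cite{Liu90} (adapted from the rigid to the Berkovich setting) shows that the canonical evaluation map $X\to\mathcal M(\OO_X(X))$ is an isomorphism of $k((t))$-analytic spaces, so $X$ is affinoid.

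The main obstacle will be the first part: establishing the Stein property requires controlling the transition maps of the exhaustion precisely enough to obtain density, and then organizing a Cech-cohomological argument that passes cleanly to the inverse limit. Once Stein-ness is established, the converse of the second assertion is an adaptation of the classical affinoid criterion due to Liu; the delicate step there is to verify that Stein-ness plus compactness plus affinoid ring of global sections suffice to conclude that the evaluation map is an isomorphism of analytic spaces, as opposed to only an isomorphism of locally ringed $G$-topological spaces.
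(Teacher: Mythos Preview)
Your argument for the Stein assertion is essentially the paper's: exhibit $\X_t^\beth$ as an increasing union of the affinoid domains $W_\varepsilon$ and invoke Kiehl's Theorem~B for quasi-Stein spaces. The paper cites Kiehl directly, while you re-derive the conclusion via a Mittag--Leffler/\v{C}ech argument; either route works. One small point where you are actually more accurate than the paper: the restriction maps $\calW_{\varepsilon'}\to\calW_\varepsilon$ have \emph{dense} image, not surjective image in general, but dense image is exactly what Kiehl's definition of quasi-Stein requires, so nothing is lost.

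For the converse of the second assertion there is a genuine gap in your argument. Both you and the paper correctly argue that compactness forces $\X_t^\beth=W_{\varepsilon_0}$ for some $\varepsilon_0$, so that $\forg(X)\cong\forg(W_{\varepsilon_0})$ in $\calC$. The paper simply stops there: in its only application (the proof of Theorem~\ref{theorem_affinoids_intrinsic}) one has $X=\X_t^\beth$ with the \emph{tautological} pseudo-affinoid structure, so $W_{\varepsilon_0}$ is already a $k((t))$-affinoid domain of $X$ with $X$'s own $k((t))$-structure, and the conclusion is immediate. The fully general statement---that an arbitrary $k((t))$-structure on $X$ is affinoid once $\forg(X)$ is---is precisely what the subsequent Theorem~\ref{theorem_affinoids_intrinsic} and Corollary~\ref{affinoids} establish. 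You instead try to close this gap here by invoking Liu's affinoid criterion, but the step ``$\OO_X(X)\cong\calW_{\varepsilon_0}$, hence $\OO_X(X)$ is a $k((t))$-affinoid algebra'' does not go through: the $\calC$-isomorphism only gives an isomorphism of $k$-algebras, not of $k((t))$-algebras, and Liu's criterion \cite[3.2.1]{Liu90} requires $\OO_X(X)$ to be affinoid for the $k((t))$-structure coming from $X$. You have not shown this. In effect you are attempting to prove Theorem~\ref{theorem_affinoids_intrinsic} inside this proposition, and the missing ingredient is exactly the de~Jong/Liu comparison of \cite[7.1.9]{deJ95} that the paper deploys there.
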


\begin{proof}
If an object $T$ of $\calC$ is pseudo-affinoid then as discussed in \ref{example_affine_beth_space} it is the increasing union of the affinoid domains $W_\varepsilon$, and we have surjective restriction morphisms $\OO_T(W_\varepsilon)\to\OO_T(W_{\varepsilon'})$ when $\varepsilon>\varepsilon'$, so $T$ is quasi-Stein in the sense of Kiehl. 
It follows that pseudo-affinoid objects are Stein spaces since Kiehl's Theorem B \cite[2.4]{Kie67} applies (see also \cite[\S2.1]{Nicaise09} for a definition of quasi-Stein and the statement of Kiehl's theorem).
To prove the second claim, note that a $k((t))$-affinoid space $X$ is compact, so $\forg(X)$ is compact.
The fact that it is pseudo-affinoid is standard: a model of $X$ is obtained by taking the formal spectrum of the image of $k[[t]]\{X_1,\ldots,X_n\}$ via an admissible epimorphism $k((t))\{X_1,\ldots,X_n\}\to\OO_X(X)$. 
Conversely, if $\forg(X)$ is pseudo-affinoid then the $W_\varepsilon$ above form a $G$-cover of it and so, since this family is increasing, by compactness $\forg(X)$ coincides with one of the $W_\varepsilon$, which is an affinoid domain with respect to the parameter $t$. 
Therefore $X$ is itself affinoid.
\end{proof}

\pa{\label{recall_liu}
Liu has proven in \cite[3.2]{Liu90} that if $X$ and $Y$ are two rigid spaces over a non-trivially valued non-archimedean field $K$ and $X$ is Stein and quasi-compact, then the canonical map 
\[
\Hom_{K\mathrm{-an}}(Y,X)\to\Hom_{K\mathrm{-alg}}\big(\OO_X(X),\OO_Y(Y)\big)
\]
 is a bijection.
He deduced that a rigid space $X$ over $K$ is strictly affinoid if and only if it is Stein, $\OO_X(X)$ is a strict affinoid $K$-algebra and $X$ is quasi-compact \cite[3.2.1]{Liu90}.
We will prove a similar result for the reduced objects of the category $\calC$.
}

\begin{thm}\label{theorem_affinoids_intrinsic}
Let $X$ be a reduced $k((t))$-analytic space. 
Then $X$ is strictly $k((t))$-affinoid if and only if $\forg(X)$ is Stein and compact and $\OOO_X(X)$, with its $\OOOO_X(X)$-adic topology, is a special $k$-algebra.
\end{thm}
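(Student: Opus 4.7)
The forward direction is the standard one: if $X = \m{\calA}$ for a strict $k((t))$-affinoid algebra $\calA$, then $X$ is compact, $\forg(X)$ is Stein by Tate's acyclicity theorem (or Kiehl's Theorem~B applied to affinoids), and $\OOO_X(X) = \calA^\circ$ is a $k[[t]]$-algebra topologically of finite type, hence a special $k$-algebra.

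For the reverse direction, set $A := \OOO_X(X)$, a special $k$-algebra by assumption. The parameter $t \in k((t))$, viewed as a global function on $X$, satisfies $|t(x)| < 1$ everywhere and thus lies in $\OOOO_X(X)$, the largest ideal of definition of $A$. This makes $A$ a special $k[[t]]$-algebra, and $\X := \Spf(A)$ becomes an affine special formal $k[[t]]$-scheme. My plan is to show that $X \cong \X^\beth$ as $k((t))$-analytic spaces, and then to verify strict affinoidness directly on $\X^\beth$.

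Compactness of $X$ forces $\OO_X(X) = A[1/t]$: every $f \in \OO_X(X)$ is bounded on compact $X$, so $t^n f \in \OOO_X(X) = A$ for $n$ large, and $t$ is invertible in $\OO_X(X)$ as it has no zeros. Moreover, $A$ is $k[[t]]$-flat (it embeds into the $k((t))$-algebra $\OO_X(X)$) and integrally closed in $A[1/t]$ (a monic relation over $A$ forces $|f(x)| \leq 1$ pointwise by the ultrametric inequality), so de Jong's theorem \cite[7.3.6]{deJ95} recalled in~\ref{dejong_original} yields $\OOO_{\X^\beth}(\X^\beth) = A$ and hence $\OO_{\X^\beth}(\X^\beth) = A[1/t]$ as well. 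The evaluation characters $A \to \rescompl{x}^\circ$ at points $x \in X$ produce a continuous morphism $\varphi \colon X \to \X^\beth$ directly, and combining this with the Berkovich analog of Liu's theorem \cite[3.2]{Liu90} recalled in~\ref{recall_liu} (applied with the Stein and compact target $X$) gives a morphism $\psi \colon \X^\beth \to X$ corresponding to the identity on $A[1/t]$; then $\psi \circ \varphi$ is the identity on $X$ by the uniqueness part of Liu's theorem, while $\varphi \circ \psi$ is the identity on $\X^\beth$ because it induces the identity on global sections $A[1/t]$ and points of $\X^\beth$ (being multiplicative seminorms on $A[1/t]$) are determined by their action on these sections.

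For strictness, fix the presentation $A = k[X_1,\ldots,X_n][[Y_1,\ldots,Y_m]]/I$. Compactness of $\X^\beth$ forces $c_j := \sup_{\X^\beth}|Y_j| < 1$ for each power-series variable $Y_j$, so for $N$ large enough $c_j^N < |t|$, meaning that $Y_j^N/t$ has sup norm at most $1$ and thus lies in $\OOO_{\X^\beth}(\X^\beth) = A$, producing integral relations $Y_j^N = tF_j$ in $A$ with $F_j \in A$. Adjoining the $F_j$ as Tate generators exhibits $A$ as finite over a strict Tate $k[[t]]$-algebra, hence topologically of finite type over $k[[t]]$, so $\OO_X(X) = A[1/t]$ is a strict $k((t))$-affinoid algebra and $X \cong \X^\beth$ is strictly $k((t))$-affinoid. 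The main obstacle is precisely this upgrade from a pseudo-affinoid to a strict affinoid structure: because the value group $|k((t))^*|$ is discrete, a naive rescaling of the power-series coordinates $Y_j$ into $|k((t))^*|$ need not succeed, and the argument instead relies on the integral relations $Y_j^N = tF_j$ supplied by de Jong's identification of global sections to replace the power-series variables with Tate ones.
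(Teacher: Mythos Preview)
Your argument has a genuine gap in the step establishing $X\cong\X^\beth$. You assert that de Jong's theorem gives $\OOO_{\X^\beth}(\X^\beth)=A$ ``and hence $\OO_{\X^\beth}(\X^\beth)=A[1/t]$ as well.'' The second equality does not follow: for a pseudo-affinoid that is not compact (for instance the open unit disc $\Spf(k[[t]][[Y]])_t^\beth$), global sections can have unbounded pole order in $t$, so $\OO_{\X^\beth}(\X^\beth)$ is strictly larger than $A[1/t]$. Since compactness of $\X^\beth$ is exactly what you are trying to obtain from the isomorphism with $X$, you cannot invoke it here. Consequently your argument that $\varphi\circ\psi=\mathrm{id}_{\X^\beth}$ is incomplete: you correctly observe that the composite is the identity on the subalgebra $A[1/t]\subset\OO_{\X^\beth}(\X^\beth)$ and hence on points (points of $\X^\beth$ being continuous seminorms on $A$), but identity on points together with identity on a sub\-algebra of global sections does not force identity as a morphism of analytic spaces. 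The paper sidesteps this by working rigid-analytically: it constructs only the map $\psi\colon\X_t^{rig}\to X^{rig}$ via Liu, then uses de~Jong \cite[7.1.9]{deJ95} to see that $\psi$ is bijective on rigid points with isomorphisms on completed local rings, and concludes by \cite[7.3.5]{BGR}. Your two-morphism approach can be repaired, but the natural fix is to first deduce from $\psi\varphi=\mathrm{id}_X$ and bijectivity on points that $\varphi$ is a homeomorphism (compact source, Hausdorff target), hence $\X^\beth$ is compact, and then invoke Proposition~\ref{pseudo-affinoid_stein} to get that $\X^\beth$ is affinoid, at which point Liu's theorem applies with $\X^\beth$ as target and everything closes up.

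On the other hand, your treatment of strictness is more explicit than the paper's. The paper ends with the bare assertion ``Since $\OOO_X(X)$ is a special $k$-algebra, $X$ is moreover strict,'' which hides a genuine step. Your observation that compactness forces $c_j=\sup|Y_j|<1$, whence $Y_j^N\in tA$ for large $N$, is the key: it shows that $(Y_1,\ldots,Y_m)^{mN}\subset tA\subset(Y_1,\ldots,Y_m)$, so the $(Y)$-adic and $t$-adic topologies on $A$ coincide, and since $A/tA$ is then a quotient of the finitely generated $k$-algebra $k[X_i][Y_j]/(Y)^{mN}$, the ring $A$ is topologically of finite type over $k[[t]]$. (Your phrasing ``adjoining the $F_j$ as Tate generators exhibits $A$ as finite over a strict Tate $k[[t]]$-algebra'' is a bit loose, but the underlying mechanism is this coincidence of topologies.) This is a useful complement to the paper's terse final sentence.
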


\begin{proof}
If $A$ is a reduced strictly affinoid $k((t))$-algebra then by \cite[\S4, Endlichkeitssatz]{GrauertRemmert1966} $A^\circ$ is a $k[[t]]$-algebra topologically of finite type. 
In particular it is special over $k$, so the ``only if'' implication is clear. 
For the converse implication, define $\X=\Spf\big(\OOO_X(X)\big)$. 
Since $X$ is a $k((t))$-analytic space, the image of $t$ in $\OO_X(X)$ is strictly bounded by $1$, hence it's an element of the largest ideal of definition of $\OO_\X(\X)$,
so $\X_t$ is an affine special formal scheme over $k[[t]]$. 
It follows that $\X_t^\beth$ is a pseudo-affinoid space.
Since $X$ is compact and $|t|<1$, we have $\OOO_X(X)[t^{-1}]\cong\OO_X(X)$. 
Indeed, any $f$ in $\OO_X(X)$ is bounded on $X$, hence $ft^n$ is bounded by $1$ for $n$ big enough. 
Therefore, by \cite[3.2]{Liu90}, as recalled in~\ref{recall_liu}, the canonical homomorphism of $k((t))$-algebras $\OOO_X(X)[t^{-1}]\to\OO_{\X_t^\beth}\big(\X_t^\beth\big)$ induces a morphism of rigid $k((t))$-analytic spaces $\X_t^{\mathrm{rig}}\to X^{\mathrm{rig}}$.
Observe that we have $X^{\mathrm{rig}}=\mathrm{Specmax}\big(\OO_X(X)\big)$ by \cite[1.3]{Liu90}, and similarly $\X_t^{\mathrm{rig}}=\mathrm{Specmax}\big(\OO_{\X_t^\beth}\big(\X_t^\beth\big)\big)$ by \cite[7.1.9]{deJ95}.
On points, the morphism $\X_t^{\mathrm{rig}}\to X^{\mathrm{rig}}$ is then obtained by sending a maximal ideal of $\OO_{\X_t^\beth}\big(\X_t^\beth\big)$ to the inverse image of this ideal under the composition $\OO_X(X)\to\OO_{\X_t^\beth}\big(\X_t^\beth\big)$.
It follows from \cite[7.1.9]{deJ95} and \cite[1.3]{Liu90} that this morphism is a bijection and that moreover it induces isomorphisms at the level of completed local rings.
Since $X^{\mathrm{rig}}$ is quasi-compact, it follows from \cite[\S7.3.3 Proposition 5]{BGR} that $X^{\mathrm{rig}}\to\X_t^{\mathrm{rig}}$ is an isomorphism of rigid spaces.
Therefore by \cite{Ber93} the morphism of analytic spaces $X\to\X_t^{\beth}$ is an isomorphism as well.
Hence, being both pseudo-affinoid and compact, $X$ is affinoid by Proposition~\ref{pseudo-affinoid_stein}.
Since $\OOO_X(X)$ is a special $k$-algebra, $X$ is moreover strict.
\end{proof}
A $k((t))$-analytic space $X$ is Stein if and only if $\forg(X)$ is, because the groups $\mathrm H^n(X,\mathscr F)$ depend only on the locally ringed site $(X,\OO_X)$ and not on the $k((t))$-algebra structure on $\OO_X$. 
Since the same is true for the two other properties in the statement above, we obtain the following corollary. 

\begin{cor}\label{affinoids}
Let $X$ and $X'$ be reduced $k((t))$-analytic spaces such that $\forg(X)\cong \forg(X')$. 
Then $X$ is strictly affinoid if and only if $X'$ is strictly affinoid.
\end{cor}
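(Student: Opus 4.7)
The plan is to deduce the corollary directly from Theorem \ref{theorem_affinoids_intrinsic} by observing that each of the three conditions characterizing strict $k((t))$-affinoidness in that theorem is an invariant of the underlying object of $\calC$. The theorem states that a $k((t))$-analytic space $Y$ is strictly $k((t))$-affinoid if and only if (i) $\forg(Y)$ is Stein, (ii) $\forg(Y)$ is compact, and (iii) $\OOO_Y(Y)$, equipped with its $\OOOO_Y(Y)$-adic topology, is a special $k$-algebra. So it suffices to check that each of these conditions depends only on the image in $\calC$.

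First I would note that (ii) is essentially by definition an invariant of $\forg(Y)$: compactness in the sense of \ref{definition_pseudo-affinoid} is formulated purely in terms of $G$-covers, and the $G$-topology is retained by the forgetful functor. Next, for (i), the remark already made in the excerpt just before the corollary is that the cohomology groups $H^n(Y,\mathscr F)$ of a coherent $\OO_Y$-module depend only on the locally $G$-ringed space $(Y,\OO_Y)$, not on the $k((t))$-algebra structure on $\OO_Y$; since $\forg$ preserves both the $G$-topology and the sheaf $\OO_Y$ in $k$-algebras, Steinness of $Y$ is equivalent to Steinness of $\forg(Y)$. Finally, for (iii), the relevant data are the $k$-algebra $\OOO_Y(Y)$ and its ideal $\OOOO_Y(Y)$, both of which are part of the $\calC$-structure of $\forg(Y)$; the property of being a special $k$-algebra for the $\OOOO_Y(Y)$-adic topology is an intrinsic condition on these algebras.

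Combining these three observations, every condition in the theorem transfers along any isomorphism in $\calC$. Given an isomorphism $\forg(X)\cong\forg(X')$, the conditions (i)--(iii) hold for $X$ if and only if they hold for $X'$, and applying Theorem \ref{theorem_affinoids_intrinsic} to each side yields that $X$ is strictly $k((t))$-affinoid if and only if $X'$ is. There is no substantial obstacle here: all the actual work was done in proving Theorem \ref{theorem_affinoids_intrinsic}, and the corollary is purely a matter of noting that the intrinsic characterization obtained there only uses data preserved by $\forg$.
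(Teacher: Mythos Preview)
Your proposal is correct and follows essentially the same approach as the paper: both deduce the corollary directly from Theorem~\ref{theorem_affinoids_intrinsic} by observing that the three conditions (Stein, compact, $\OOO_X(X)$ special with the $\OOOO_X(X)$-adic topology) depend only on the object $\forg(X)$ of $\calC$. One small remark: $\OOOO_Y$ is not literally part of the data of an object of $\calC$, but it is recovered intrinsically as the subsheaf of $\OOO_Y$ of sections non-invertible in every stalk (see the paragraph following Corollary~\ref{dejong_normalized}), so your point (iii) is indeed valid.
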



If $V$ is a subspace of an object of $\calC$ of the form $\forg(X)$ for some $k((t))$-analytic space $X$, then $V$ inherits the a structure of a $k((t))$-analytic space. 
Therefore, the last corollary has the following useful consequence.

\begin{cor}\label{affinoids2}
Let $X$ be a $k((t))$-analytic space and let $V$ be a reduced analytic domain of $\forg(X)$. 
Then $V$ is an affinoid domain of $\forg(X)$ if and only if it is of the form $\forg(W)$ for some strict affinoid domain $W$ of $X$. 
Moreover, $V$ is distinguished with respect to $t|_V$ if and only if $W$ is a distinguished strictly affinoid $k((t))$-analytic space.
\end{cor}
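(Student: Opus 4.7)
The plan is to reduce everything to Corollary~\ref{affinoids}, using the observation (made just before the statement) that any subspace $V$ of $\forg(X)$ naturally carries an induced $k((t))$-analytic structure coming from the ambient space $X$.

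The ``if'' direction is essentially the definition: if $V=\forg(W)$ for a strict affinoid domain $W$ of $X$, then $V$ is by definition an affinoid domain of $\forg(X)$. For the ``only if'' direction, suppose $V$ is affinoid, so that $V\cong \forg(Y)$ for some strictly $k((t))$-affinoid space $Y$. Let $V'$ denote $V$ equipped with the $k((t))$-analytic structure it inherits as a subspace of $X$. Then by construction $\forg(V')\cong V\cong\forg(Y)$, so Corollary~\ref{affinoids} applies and tells us that $V'$ is strictly $k((t))$-affinoid. Setting $W:=V'$ gives the desired strict affinoid domain of $X$ with $V=\forg(W)$.

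For the second assertion, observe that both the sheaf $\OOO$ and its subsheaf $\OOOO$ depend only on the underlying locally ringed $G$-topological space and not on the $k((t))$-algebra structure on $\OO$. Hence for $W$ as above we have $\OOOO_V(V)=\OOOO_W(W)$ canonically, and the distinguished element $t|_V\in\OOOO_V(V)$ corresponds to the element $t\in\OOOO_W(W)$ under this identification. The condition that $t|_V$ generates $\OOOO_V(V)$ is therefore identical to the condition that $t$ generates $\OOOO_W(W)$, which by the discussion preceding the corollary is exactly the condition that $W$ be principal strict affinoid.

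I do not expect a serious obstacle here; the only point that requires a moment of care is verifying that the $k((t))$-analytic structure induced on $V$ from its embedding into $X$ really does satisfy $\forg(V')=V$ as objects of $\calC$, so that Corollary~\ref{affinoids} is applicable. This is a direct unravelling of definitions once one recalls that analytic domains, $\OO$, and $\OOO$ all restrict to subspaces of an analytic space.
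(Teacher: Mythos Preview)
Your proposal is correct and follows exactly the approach the paper intends: the paper does not spell out a proof, but simply remarks that $V$ inherits a $k((t))$-analytic structure from $X$ and that the corollary then follows from Corollary~\ref{affinoids}. You have faithfully unpacked this one-line deduction, including the second assertion about principality, which the paper leaves entirely implicit.
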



\section{Functoriality}
\label{section_3.4}

In this section we introduce the category of normalized spaces over $k$. 
The main result, Theorem~\ref{thm_functoriality}, is the analogue for normalized spaces of the fundamental result of Raynaud  (\cite{Raynaud74}, a detailed proof is in \cite[4.1]{BosLut93}) that states that the functor $\X\mapsto\X^\beth$ induces an equivalence between the category of admissible formal schemes of finite type over a complete valuation ring of height one $R$, localized by the class of admissible blowups, and the category of compact and quasi-separated strictly analytic spaces over $\mathrm{Frac}(R)$.

\pa{\label{definition_normalized_spaces}
We say that an object $T$ of $\calC$ is a \defi{normalized space} if it is compact and it has an atlas.
Observe that such a $T$ is then \emph{quasi-separated}, which means that the intersection of any two affinoid domains of $T$, being a compact analytic domain of each of them, is a finite union of affinoid domains of $T$.
A morphism $f\in \mbox{Hom}_{\calC}(T',T)$ is said to be a \defi{morphism of normalized spaces} if there exist finite covers $\{V_i\}_{i\in I}$ of $T$ and $\{W_j\}_{j\in J}$ of $T'$ by affinoid domains such that for every $i$ in $I$ there is a subset $J_i$ of $J$ with $f^{-1}(V_i)=\cup_{j\in J_i}W_j$ and, for every $j$ in $J_i$, the restriction $f|_{W_j}:W_j\to V_i$ of $f$ to $W_j$ is induced by a morphism of $k((t))$-analytic spaces. 
This means that $f|_{W_j}$ is associated with a $k((t))$-homomorphism $\calO_T(V_i)\to\calO_{T'}(W_j)$, where $\calO_T(V_i)$ and $\calO_{T'}(W_j)$ are seen as $k((t))$-affinoid algebras.
A more intrinsic way to see morphisms of normalized spaces is as those morphisms in $\calC$ that are induced by adic morphisms between formal models, as will be clear from Theorem~\ref{thm_functoriality}.
The \defi{category of normalized spaces} $(NAn_k)$ is the subcategory of $\calC$ whose objects are normalized spaces and whose morphisms are morphisms of normalized spaces.
}


\pa{
We want to prove an analogue of Raynaud's theorem for the functor $T\colon \X\mapsto T_\X$ going from the category of admissible special formal $k$-schemes with adic morphisms, which we denote by $(SFor_k)$, to $(NAn_k)$.
We will show that the functor $T$ is the localization of the category $(SFor_k)$ by the class $B$ of admissible formal blowups and we will characterize its essential image. 
The fact that $T$ sends admissible blowups to isomorphisms is~\ref{invariance_generic_fiber}.
}

\pa{
The category $(SFor_k)$ admits calculus of (right) fractions with respect to the class of morphisms $B$, in the sense of \cite[Ch. I]{GabrielZisman67}. 
This follows easily from the universal property of blowups and the results of \ref{basic properties blowup}.
%
Therefore, the localized category $(SFor_k)_B$ can be described in a simple way: its objects are the objects of $(SFor_k)$, and a morphism $\Y\to\X$ is a two-step zigzag 
$$
\xymatrix@R=.3pc@M=3pt@L=3pt{
& \Y'  \ar[dl]_w\ar[dr]^f\\
\Y  && \X 
}$$
where $f$ is a morphism in $(SFor_k)$ and $w$ is an admissible blowup, modulo the equivalence relation given by further blowing up $\Y'$. 
Such a morphism can be thought of as a fraction $fw^{-1}$.
Moreover, the localization functor $(SFor_k)\to(SFor_k)_B$ is left exact, and therefore preserves finite limits.
}

\begin{thm}\label{thm_functoriality}
 The functor $T:\X\mapsto\T$ induces an equivalence between the category $(SFor_k)_B$, the localization of the category $(SFor_k)$ of admissible special formal $k$-schemes with adic morphisms by the class $B$ of admissible blowups, and the category $(NAn_k)$.
\end{thm}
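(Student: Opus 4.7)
The plan is to adapt Raynaud's classical strategy to our setting, in three steps. First I would check that the functor $T$ factors through the localization and lands in $(NAn_k)$: admissible blowups become isomorphisms by~\ref{invariance_generic_fiber}; the space $T_\X$ is compact and admits a finite atlas by Proposition~\ref{existence_atlas}; quasi-separatedness follows because the intersection of two affinoid domains, viewed inside a $k((t))$-analytic chart of either, is a finite union of affinoid domains by Corollary~\ref{affinoids2} combined with quasi-separatedness of strictly $k((t))$-affinoid spaces. This yields an induced functor $\bar T\colon (SFor_k)_B \to (NAn_k)$, and the task is to prove it is essentially surjective, faithful and full.

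For essential surjectivity, let $T$ be a normalized space with finite affinoid atlas $\{V_i\}_{i\in I}$. By Theorem~\ref{theorem_affinoids_intrinsic}, each $A_i := \OOO_T(V_i)$ is a special $k$-algebra, and since $V_i$ is strictly $k((t))$-affinoid the $A_i$ are also admissible; the identification provided in the proof of~\ref{theorem_affinoids_intrinsic} gives $T_{\Spf A_i}\cong V_i$ canonically. Using quasi-separatedness to decompose each $V_i\cap V_j$ into finitely many affinoid pieces, and invoking Corollary~\ref{affinoids2} to realize each piece as a formal localization of both $\Spf A_i$ and $\Spf A_j$ (after pullback through suitable admissible blowups), one obtains gluing data whose cocycle condition can be arranged on a common refinement using~\ref{blowup: composition} and~\ref{blowup: domination}. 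This gives an admissible special formal $k$-scheme $\X$ with $\bar T(\X)\cong T$.

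For faithfulness and fullness, the key point is that a morphism of normalized spaces is, by definition, locally induced by morphisms of $k((t))$-analytic affinoids on suitable atlases. On an affinoid piece, such a $k((t))$-analytic morphism is equivalent to a continuous $k$-algebra morphism between the rings of $\OOO$-sections, which by Corollary~\ref{dejong_normalized} (applied after a normalization, which does not change the normalized space by the argument of Lemma~\ref{lem_admissible_1}) coincide with the structure rings of the canonical formal models. Thus local analytic data translate into local morphisms of affine special formal $k$-schemes. The calculus of right fractions on $(SFor_k)$ afforded by~\ref{basic properties blowup} then permits dominating the various local pieces by a single admissible blowup $\Y'\to\Y$ so that the pieces assemble into a global morphism $\Y'\to\X$ in $(SFor_k)$, i.e.\ a fraction representing the given morphism of normalized spaces. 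Conversely, two fractions with the same image on normalized spaces induce the same continuous $k$-algebra morphism on $\OOO$-sections over every affinoid of a fine enough common atlas, and hence coincide after a further admissible blowup, yielding faithfulness.

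The main obstacle I anticipate is the bookkeeping in the gluing steps: both in essential surjectivity and in fullness one must verify that locally defined formal objects or morphisms, each adapted to a different local chart, organize into coherent global data after passing to a single common admissible refinement. The technical core of the argument is Corollary~\ref{dejong_normalized}, which realizes a normal admissible affine formal scheme as $\Spf\OOO_T(T)$ of its normalized space and so provides the dictionary between analytic and formal data on overlaps; together with the intrinsic characterization of affinoid domains in Theorem~\ref{theorem_affinoids_intrinsic}, which ensures that this dictionary does not depend on any choice of $k((t))$-analytic structure.
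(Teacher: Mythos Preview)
Your overall architecture is right, but the way you propose to run faithfulness and fullness has a real gap, and it stems from the role you assign to Corollary~\ref{dejong_normalized}.

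First, a factual slip: Lemma~\ref{lem_admissible_1} concerns the quotient by the torsion ideal, not normalization; it says nothing about integral closure. More importantly, even if normalization does preserve the normalized space, it is a finite morphism and \emph{not} an admissible blowup, so it is not invertible in $(SFor_k)_B$ and you cannot freely pass to it when comparing morphisms in the localized category. Your fullness and faithfulness arguments both hinge on identifying the structure ring $A$ of an affine piece with $\OOO_{\T}(\T)$ via de~Jong, and that identification requires normality; without it, the map on $\OOO$-sections you produce need not come from, or determine, a map $A\to B$.

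The paper avoids this entirely. For faithfulness it uses only the \emph{definition} of admissibility: the injection $A\hookrightarrow\OO_{\Xeta}(\Xeta)$ (not an isomorphism) together with surjectivity of the specialization map~\ref{sp_surjective} is already enough to force $f=g$. For fullness and for essential surjectivity it does not try to reconstruct formal models from $\OOO$-sections at all; instead it proves a key lemma (Lemma~\ref{L: model of affinoid}) realizing any affinoid domain of $T_\X$ as the normalized space of an \emph{open formal subscheme} of an admissible blowup of $\X$, by reducing to the classical statement over $k[[t]]$. This lemma is precisely the missing bridge in your gluing step: Corollary~\ref{affinoids2} only tells you an affinoid domain is strictly $k((t))$-affinoid, not that it arises from an open piece of some blowup of the given model. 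Once you have that lemma, fullness is obtained by locally lifting the $k((t))$-analytic morphisms via the classical Raynaud theorem over $k[[t]]$ and gluing, and essential surjectivity by the same induction on the size of an atlas that you sketch, but with the open immersions now actually available.
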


The remaining part of this section will be devoted to the proof of this result.

\begin{lem}\label{L: model of affinoid}
Let $\T$ be the normalized space of an admissible special formal $k$-scheme $\X$. 
Then:
\begin{enumerate} 
	\item If $V$ is an affinoid domain of $\T$, then there exist an admissible formal blowup $\X'\to\X$, a formal $k[[t]]$-scheme of finite type $\calV$ and an open immersion of formal $k$-schemes $\calV\hookrightarrow\X'$ inducing an isomorphism $T_{\calV}\cong V$ in $\calC$. 
	\item If $\{V_j\}_{j\in J}$ is a finite atlas of $\T$, then there exist an admissible formal blowup $\X'\to\X$ and a cover $\{\calV_j\}_{j\in J}$ of $\X$ by open formal subschemes such that $T_{\calV_j}\cong V_j$ for every $j$.
\end{enumerate}
\end{lem}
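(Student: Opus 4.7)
The plan is to reduce both statements to the classical theorem of Raynaud for admissible formal schemes topologically of finite type over the discrete valuation ring $k[[t]]$ (\cite[4.1]{BosLut93}), using the local identification between normalized spaces and $k((t))$-analytic spaces of Theorem \ref{thm_structure_arboretum} together with the composition, domination and extension properties of admissible blowups collected in \ref{basic properties blowup}. It is enough to prove (ii): part (i) will then follow by completing $\{V\}$ to a finite atlas of $\T$ via Proposition \ref{existence_atlas}, running (ii) on the resulting atlas, and then discarding every open formal subscheme except the one modelling $V$.

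To prove (ii), I first replace $\X$ by an admissible blowup so that, as in the proof of Proposition \ref{existence_atlas}, $\X$ is covered by finitely many affine opens $\calU_\alpha=\Spf(A_\alpha)$ with $A_\alpha$ topologically of finite type over $k[[t_\alpha]]$, where $t_\alpha$ generates the largest ideal of definition of $A_\alpha$. By Theorem \ref{thm_structure_arboretum}, each $T_{\calU_\alpha}$ is identified in $\calC$ with the strict $k((t))$-affinoid space $\calU_{\alpha,t_\alpha}^\beth$. For each pair $(j,\alpha)$ the intersection $V_j\cap T_{\calU_\alpha}$ is a compact analytic domain of this strict affinoid, hence a finite union of strict affinoid subdomains of $\calU_{\alpha,t_\alpha}^\beth$ by Corollary \ref{affinoids2} together with Gerritzen-Grauert. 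Classical Raynaud provides, for each of these subdomains, an admissible blowup of $\calU_\alpha$ and an open affine formal subscheme of the blowup that models it; using \ref{blowup: extension} to extend each of these to a blowup of $\X$ and then \ref{blowup: composition} and \ref{blowup: domination} to dominate the finite collection, I obtain a single admissible blowup $\X''\to\X$ carrying open affine formal subschemes whose normalized spaces form a finite $G$-cover of every $V_j$.

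The remaining task is to convert, uniformly in $j$, this $G$-cover of $V_j$ into a single open affine formal subscheme $\calV_j\subset\X'$ with $T_{\calV_j}\cong V_j$. The natural candidate is the canonical affine formal model $\Spf\OOO_{V_j}(V_j)$, which by Theorem \ref{theorem_affinoids_intrinsic} is a formal $k[[t_{V_j}]]$-scheme of finite type for the affinoid parameter $t_{V_j}$ of $V_j$. Classical Raynaud links this model to the local ones already sitting inside $\X''$ by further admissible blowups performed in the preimage of $V_j$; applying extension, composition and domination once more, simultaneously for every $j\in J$, produces the desired single admissible blowup $\X'\to\X''$ in which each $\calV_j$ realizes $V_j$.

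The main obstacle is precisely this last step. Turning a finite $G$-cover of $V_j$ by open affine formal subschemes of a blowup into one open formal subscheme representing $V_j$ itself is the gluing half of the classical Raynaud argument, and its global execution here -- simultaneously over all $j\in J$, so that all the $\calV_j$ sit inside a common admissible blowup of $\X$ -- rests crucially on the possibility to extend, compose and dominate admissible blowups via \ref{basic properties blowup}.
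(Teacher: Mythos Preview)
Your approach is correct and rests on the same strategy as the paper—reduce to the classical Raynaud/Bosch results for admissible formal $k[[t]]$-schemes of finite type via the local identification of Theorem~\ref{thm_structure_arboretum}, then repeatedly use the extension, composition, and domination properties of admissible blowups from~\ref{basic properties blowup}.

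The chief difference is the order of the two parts. The paper proves (i) first and then deduces (ii) by applying (i) to each $V_j$ separately and dominating the resulting finite family of blowups. Its argument for (i) is also shorter than your argument for (ii): rather than decomposing each $V\cap T_{\calU_\alpha}$ into strict affinoid subdomains via Gerritzen--Grauert and then reassembling, the paper applies \cite[\S2.8, Lemma~4]{Bosch14} directly to the quasi-compact admissible open $V\cap T_{\X_i}$ of the $k((t))$-affinoid $T_{\X_i}$, obtaining in one stroke an open formal subscheme $\calV_i$ of a blowup of $\X_i$ with $T_{\calV_i}=V\cap T_{\X_i}$, and then takes $\calV=\bigcup_i\calV_i$. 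This sidesteps entirely the ``gluing half'' that you flag as the main obstacle. Your route is not wrong, but it gains nothing and costs the extra decomposition-and-reassembly step.

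One small wrinkle in your deduction of (i) from (ii): as stated, (ii) only produces open formal subschemes $\calV_j$, while (i) demands that $\calV$ be an \emph{affine} formal $k[[t]]$-scheme of finite type. You try to arrange this by aiming for the canonical model $\Spf\OOO_{V_j}(V_j)$, but the sentence explaining why this particular affine model—rather than merely some admissible blowup of it—embeds as an open in a further blowup of $\X''$ is missing. (The paper's own proof of (i) produces $\calV=\bigcup_i\calV_i$, which is open but not visibly affine either, so this is a shared looseness that does not affect the applications in the paper.)
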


\begin{proof} We prove $(i)$ by reducing to the classical case of formal $k[[t]]$-schemes of finite type, where it is \cite[8.4.5]{Bosch14}. 
After an admissible blowup we can assume that $\X$ is covered by open formal $k[[t]]$-schemes of finite type $\X_i$, as in the proof of Proposition~\ref{existence_atlas}. 
Then $\T=\bigcup_i T_{\X_i}$, and so $V=\bigcup_i (V\cap T_{\X_i})$. 
Inducting on $i$, \cite[8.4.5]{Bosch14} tells us that after an admissible blowup $\X'_i$ of $\X_i$, which extends to an admissible blowup $\X'$ of $\X$, we can find an open formal $k[[t]]$-subscheme $\calV_i$ of $\X'_i$, which is also an open formal $k$-subscheme of $\X'$, such that $T_{\calV_i}=V_i$. 
We obtain an open formal $k$-subscheme $\calV=\bigcup_i\calV_i$ of an admissible blowup of $\X$ such that $T_\calV=V$, which is what we wanted.
To prove $(ii)$, we apply $(i)$ to get for every $j$ an admissible blowup $\X_j$ of $\X$ with an open formal subscheme $\calW_j\subset\X_j$ such that $T_{\calW_j}\cong V_j$. 
Using \ref{blowup: domination} we take an admissible blowup $\X'$ of $\X$ dominating all $\X_j$ via maps $f_j:\X'\to\X_j$, and we set $\calV_j\coloneqq f_j^{-1}(\calW_j)$. 
The $\calV_j$ form a cover of $\X'$ that satisfies the requirements since $T_{\calV_j}\cong T_{\calW_j}\cong V_j$.
\end{proof}

The rest of the proof of Theorem~\ref{thm_functoriality} will be divided in six steps. 
While the result could be proven in an analogous way as Raynaud did, we decided to deduce it from his result, to give an idea of how to apply to normalized spaces standard techniques over $k[[t]]$, in a similar way as what we did in Lemma~\ref{L: model of affinoid}.

\begin{step} {\it The functor $T$ factors through $(NAn_k)$.}
If $\X$ is a special formal scheme over $k$, then $T_\X$ is a normalized space, since it admits a finite atlas by Proposition~\ref{existence_atlas}. 
Now let $f:\Y\to\X$ be an adic morphism of special formal $k$-schemes. 
By replacing $\X$ and $\Y$ by admissible blowups we obtain an adic morphism $f':\Y'\to\X'$ and we can assume that $\X'$ is covered by affine open formal subschemes $\X_i$ of finite type over $k[[t]]$. 
The open formal $k$-subschemes $(f')^{-1}(\X_i)$ of $\Y'$ are themselves covered by affine open formal $k$-subschemes $\Y_{i,j}$, and since $f'$ is adic, the morphisms of formal $k$-schemes $f'|_{\Y_{i,j}}:\Y_{i,j}\to\X_i$ can be upgraded to a morphism of formal $k[[t]]$-schemes, by choosing the $k[[t]]$-structure on $\Y_{i,j}$ given by the parameter $f'_\#t$, and therefore it induces a morphism of $k((t))$-analytic spaces $(\Y_{i,j})_t^\beth\to(\X_i)_t^\beth$. 
This shows that $T_f$ is a morphism in $(NAn_k)$.
\end{step}

\begin{step} {\it Faithfulness.} Let $f,g:\Y\to\X$ be two morphisms in $(SFor_k)$ such that the induced morphisms of normalized spaces $f_T,g_T:T_\Y\to\T$ coincide. 
Since $\X$ and $\Y$ are admissible, the specialization maps $T_\X\to\X$ and $T_\Y\to\Y$ are surjective by Proposition~\ref{sp_surjective}.
Consider the following diagram:
\begin{displaymath}
    \xymatrix@C=3.5pc@R=1.5pc@M=3pt@L=3pt{
        T_\Y \ar@{>>}[d]_{\Sp_\Y} \ar[r]^{f_T=g_T} & T_\X  \ar@{>>}[d]^-{\Sp_{\X}} \\
        \Y \ar@<.7ex>[r]^f\ar@<-.7ex>[r]_g       & \X }
\end{displaymath}
\noindent It commutes for both choices of the map on bottom, so it follows that $f$ and $g$ coincide as maps between the topological spaces underlying $\Y$ and $\X$. 
Therefore we can assume that $\X$ and $\Y$ are affine, $\X=\Spf(A)$ and $\Y=\Spf(B)$, so that $f$ and $g$ correspond to two k-algebra maps $f_\sharp$ and $g_\sharp:A\to B$ respectively. 
Consider then the following diagram
\begin{displaymath}
\xymatrix@R=1.5pc@C=3.5pc@M=4pt@L=3pt{
        A \ar@{^{(}->}[d] \ar@<.7ex>[r]^{f_\sharp}\ar@<-.7ex>[r]_{g_\sharp} & B  \ar@{^{(}->}[d] \\
        \OO(\Xeta)    \ar[r]_{(f^*)_\sharp=(g^*)_\sharp}    & \OO(\Y^*) }
\end{displaymath}
that commutes for both choices of the map on top.
The vertical arrows are injective since $\X$ and $\Y$ are admissible, so $f_\sharp=g_\sharp$, and hence $f=g$.
\end{step}

\begin{step} {\it Fullness (modulo admissible blowup).} 
Let $f:T'\to T$ be a morphism in $(NAn_k)$ and let $\X$ and $\mathscr Y$ be models of $T$ and $T'$ respectively. 
Given two finite affine covers of $\X$ and of $\Y$, using the fact that admissible blowups open formal subschemes can be extended by~\ref{blowup: extension}, after blowing up $\Y$ we can refine them to finite covers $\{(\mathscr V_i)_t\}$ of $\X$ and $\{(\mathscr W_j)_t\}$ of $\Y$ by affine formal schemes of finite type over $k[[t]]$ in such a way that, if we define $k((t))$-analytic spaces $V_i=(\mathscr V_i)_t^\beth$ and $W_j=(\mathscr W_j)_t^\beth$, then $\{\forg(V_i)\}$ and $\{\forg(W_i)\}$ are covers of $T$ and $T'$ respectively as the ones in the definition of a morphism of normalized spaces given in \ref{definition_normalized_spaces}; in particular for every $j$ there exist an $i$ and a morphism of $k((t))$-analytic spaces $W_j\to V_i$ that lifts $f|_{\forg(W_j)}$.
For every $j$ we use Raynaud's theorem (see in particular assertion (c) after Theorem 4.1 in \cite{BosLut93}): after blowing up $(\mathscr W_j)_t$ to $(\mathscr W_j')_t$, the $k((t))$-analytic morphism $W_j\to V_i$ lifts to a morphism $F:(\mathscr W_j')_t\to(\mathscr V_i)_t$ of formal schemes of finite type over $k[[t]]$.
These morphisms glue to a morphism $F:\Y'\to\X$ of formal schemes over $k$ from a blowup $\Y'$ of $\Y$ to $\X$, and $T(F)=f$ since this is the case locally. 
The morphism $F$ is adic since it is locally a morphism of $k[[t]]$-formal schemes of finite type and such morphisms are always adic.
\end{step}

\begin{step}\label{proof raynaud isomorphism} {\it Isomorphisms come from admissible blowups.} 
If in the previous step we take for $f$ an isomorphism in $\calC$, then it can be lifted to an admissible blowup $F:\Y'\to\X$.
Indeed, if $f$ is an isomorphism, then the analytic morphisms $f|_{W_j}:W_j\to V_i$ of the previous step are all immersions of an affinoid domain in a $k((t))$-analytic space, so we can use the analogous result in Raynaud theory.
\end{step}

\begin{step} {\it Existence of a model.} 
Let $T$ be an element of $(NAn_k)$ and let $\{X_i\}_{i\in I}$ be a finite atlas of $T$. 
We will prove the existence of a model of $T$, i.e. a special formal $k$-scheme $\X$ such that $\T\cong T$, by induction on the cardinality of $I$. 
If $I$ consists of only one element then $T$ is affinoid, and therefore has a model. 
If $I=\{1,2,\ldots,n\}$, then $X_n$ being affinoid has a model $\calU$, and by induction we can find a model $\calV$ of $V\coloneqq X_1\cup\cdots\cup X_{n-1}$. 
Set $W=V\cap X_n$. 
Since $T$ is quasi-separated, $W$ admits a finite cover by affinoid domains, and this cover can be enlarged to a cover of $V$ by affinoid domains. 
By Lemma~\ref{L: model of affinoid}, there exist an admissible blowup $\calV'\to\calV$ and an open immerson $\calW_1\hookrightarrow\calV'$ inducing an isomorphism $T_{\calW_1}\cong W$. 
Similarly, there exist an admissible blowup $\calU'\to\calU$ and an open immerson $\calW_2\hookrightarrow\calU'$ inducing an isomorphism $T_{\calW_2}\cong W$. 
Since $\calW_1$ and $\calW_2$ are both models of $W$, using {\it Step \ref{proof raynaud isomorphism}} we can find an admissible blowup $\calW$ of both $\calW_1$ and $\calW_2$. 
These blowups can be extended using~\ref{blowup: extension} to admissible blowups $\calV''\to\calV$ and $\calU''\to\calU'$, so we can glue $\calV''$ and $\calU''$ along $\calW$, obtaining a model of $T$.
\end{step}

\begin{step} {\it End of proof.} It remains to prove that the functor $T$ satisfies the universal property of the localization of categories. 
The fact that $T$ sends admissible blowups to isomorphisms is~\ref{invariance_generic_fiber}. 
Given a category $\calC$ and a functor $F:(SFor_k)\to\calC$ such that $F(b)$ is an isomorphism in $\calC$ for every admissible blowup $b$, we need to show that $F$ factors as $G\circ T$ for a unique functor $G:(NAn_k)\to\calC$.
This is done in the exact same way as in the proof of Raynaud's theorem, using the basic results on blowups of~\ref{basic properties blowup}.
\end{step}

\noindent This completes the proof of Theorem~\ref{thm_functoriality}.\hfill\qedsymbol

\begin{rem}
In particular, an analytic space of the form $\Xeta$ for some special formal $k$-scheme $\X$ is uniquely determined by the associated normalized space. 
An explicit way of retrieving the topological space underlying $\Xeta$ from $T_\X$ is the following. 
If we cover $\T$ by affinoid subspaces $X_i$ with respect to parameters $t_i$, we need to glue the topological spaces $X_i\times\I$ along subspaces homeomorphic to $X_{ij}\times\I$. 
The gluing data is encoded in the $t_i$: if $x$ is a point of $X_{ij}$, we identify $(x,\gamma)\in X_i\times\I$ to $(x,\lambda_{ij}(x)\gamma)\in X_j\times\I$, where $\lambda_{ij}(x)\coloneqq \log|t_j|/\log|t_i|(x)$ is defined as in Remark~\ref{functions_on_T}.
\end{rem}


\section{Modifications of surfaces and vertex sets}
\label{section_4.1}

Starting from this section we move to the study of pairs $(X,Z)$, where $X$ is a surface over an arbitrary (trivially valued) field $k$ and $Z$ is a closed subvariety of $X$ containing its singular locus.
After giving some general definitions, in Theorem~\ref{vertex_sets} we use normalized spaces to produce formal modifications of $(X,Z)$ with prescribed exceptional divisors.

\pa{\label{setting_surfaces}
 Let $X$ be a surface over $k$, that is a geometrically integral and generically smooth $k$-scheme of dimension 2, and let $Z\subsetneq X$ be a nonempty closed subscheme whose support contains the singular locus of $X$. 
We denote by $\X$ the formal completion of $X$ along $Z$, and by $\Txz$ the normalized space $\T$ of $\X$.
We also call $\Txz$ the \defi{normalized space of the pair $(X,Z)$}. 
As discussed in Example~\ref{example_explicit_normalization}, $\Txz$ can be viewed as a non-archimedean model for the link of $Z$ in $X$.
We denote by $\widetilde{\Bl_{Z}X}$ the normalization of the blowup of $X$ along $Z$, and we write $\widetilde \X$ for the formal completion of $\widetilde{\Bl_{Z}X}$ along $\widetilde{\Bl_{Z}X}\times_{X}Z$. 
Then $\widetilde \X$ is another special formal $k$-scheme that is a model of $\Txz$.
By~\cite[2.16.5]{Nicaise09}, $\widetilde{\X}$ is isomorphic to the normalization of the formal blowup of $\X$ along $Z$.
}

\pa{ 
A \defi{log modification} of the pair $(X,Z)$ is a pair $(Y,D)$ consisting of a normal $k$-variety $Y$ and a Cartier divisor $D$ of $Y$, together with a proper morphism of $k$-varieties $f:Y\to X$ such that $D=Y\times_XZ$ as subschemes of $Y$ and $f$ is an isomorphism outside of $D$.
A log modification $(Y,D)$ of $(X,Z)$ is said to be a \defi{log resolution} of $(X,Z)$ if $Y$ is regular and $D$ has normal crossings (by which we mean that $D_{\mathrm{red}}$ has normal crossings, but not necessarily strict normal crossings, in the usual sense).
Note that $D$ being Cartier is not equivalent to the set-theoretic inverse image $f^{-1}(Z)=D_{\mathrm{red}}$ being Cartier, therefore our notion of log resolution is different from the notion of good resolution that is sometimes found in the literature.
}

\pa{
A \defi{formal log modification} of the pair $(X,Z)$ is a normal special formal $k$-scheme $\Y$ together with an adic morphism $f:\Y\to\X$ that induces an isomorphism of normalized spaces $T_\Y\stackrel{\sim}{\longrightarrow}\Txz$, and such that $\Y\times_\X Z$ is a Cartier divisor of $\Y$.
 If moreover $\Y$ is regular and $\Y\times_\X Z$ has normal crossings in $\Y$, then $\Y$ is said to be a \defi{formal log resolution} of $(X,Z)$.
}

\begin{lem}\label{lemma_properness_formal_modifications}
Let $f:\Y\to\X$ be a formal log modification of $(X,Z)$. 
Then $f$ is proper.
\end{lem}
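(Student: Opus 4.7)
The approach is to use Theorem~\ref{thm_functoriality} to find admissible formal blowups $b\colon \Y'\to\Y$ and $a\colon \Y'\to\X$ with $a=f\circ b$, and then deduce properness of $f$ by descent along the proper surjection $b$.

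For Theorem~\ref{thm_functoriality} to apply, I first verify that both $\X$ and $\Y$ are admissible. For $\X=\widehat{X/Z}$ this follows from $X$ being a reduced variety and $Z$ a proper closed subscheme. For $\Y$, normality combined with the Cartier hypothesis on $\Y\times_\X Z$ ensures that no irreducible component of $\Y$ lies entirely in $\Y_0$: were $C$ such a component, the Cartier divisor $\Y\times_\X Z$ would contain $C$, forcing the local non-zero-divisor $t$ cutting it out to lie in the minimal prime of $C$ and hence, since $\Y$ is reduced, to be a zero-divisor, a contradiction. It follows that on each affine chart $\Spf(A)\subseteq\Y$ the ring $A$ has no torsion in the sense of \ref{definition_admissible_algebra}, so $\Y$ is admissible by Proposition~\ref{admissible}. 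With admissibility in place, Theorem~\ref{thm_functoriality} and the assumption that $T(f)$ is an isomorphism imply that $f$ is invertible in $(SFor_k)_B$. Unwinding Steps~3 and~4 of the proof of that theorem, one produces an admissible formal blowup $b\colon \Y'\to\Y$ together with a morphism $F\colon \Y'\to\X$, itself an admissible formal blowup after possibly a further admissible blowup of the source, such that $T(F)=T(f\circ b)$; the faithfulness proved in Step~2 then forces $F=f\circ b$. Writing $a:=F$, I thereby obtain admissible formal blowups $b\colon \Y'\to\Y$ and $a\colon \Y'\to\X$ with $a=f\circ b$.

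Properness of $f$ then follows by descent. Admissible formal blowups are proper and surjective on underlying topological spaces, so $b$ is proper and surjective and $a=f\circ b$ is proper. Since $f$ is adic, its properness is equivalent to properness of the induced morphism $f_0\colon\Y_0\to\X_0$ between finite-type $k$-schemes. For any closed $K\subseteq\Y_0$, the identity $f_0(K)=a_0(b_0^{-1}(K))$, valid because $b_0$ is surjective, together with the fact that $a_0$ is closed shows that $f_0$ is closed; the same argument after an arbitrary base change yields universal closedness. Separatedness of $f_0$ is automatic since $\Y_0$ and $\X_0$ are both separated, and finite type descends from $a_0$ along the quasi-compact surjection $b_0$. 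Thus $f_0$ is proper, whence $f$ is proper. The main delicate point is the extraction of the factorization $a=f\circ b$ from Theorem~\ref{thm_functoriality}, which relies on carefully combining the fullness, faithfulness, and isomorphism criteria provided by the three key steps of its proof.
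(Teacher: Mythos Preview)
Your proof is correct and follows the same overall strategy as the paper's: invoke Theorem~\ref{thm_functoriality} to realize the isomorphism $T(f)$ through admissible blowups, then descend properness of $f_0$ along a proper surjection. The only difference is in which direction you lift. The paper lifts the \emph{inverse} $\alpha=T(f)^{-1}$, obtaining an admissible blowup $\tau\colon\X'\to\X$ and a morphism $g\colon\X'\to\Y$ with $f\circ g=\tau$ (using only the fullness Step~3), and then argues that $g_0$ is surjective via surjectivity of $\Sp_\Y$ and the commutative square with specialization maps. You instead lift $T(f)$ itself and use Step~4 to arrange that both $b\colon\Y'\to\Y$ and $a=f\circ b\colon\Y'\to\X$ are admissible blowups, so that surjectivity of $b_0$ comes for free from $\Y$ being admissible. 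Your explicit verification of admissibility of $\Y$ is a useful addition: the paper's argument also relies on it implicitly, since the surjectivity of $\Sp_\Y$ used there is Proposition~\ref{sp_surjective}, which requires $\Y$ admissible.
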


\begin{proof}
Since $f$ is adic by definition, it is enough to show that the induced morphism $f_0:\Y_0\to\X_0$ is a proper morphism of schemes. 
Consider the morphism $\alpha:T_\X\to T_\Y$, inverse of the isomorphism induced by $f$. 
Theorem~\ref{thm_functoriality} states that there exists an admissible blowup $\tau:\X'\to\X$ and a morphism $g:\X'\to\Y$ such that $\alpha$ is induced by $g$. 
Since the composition $\X'\stackrel{g}{\longrightarrow}\Y\stackrel{f}{\longrightarrow}\X$ is $\tau$, the induced map $f_0\circ g_0\colon\X'_0\to\X_0$ is proper.
The map $g_0$ is surjective because $\Sp_{\Y}$ is surjective and the following diagram
\begin{displaymath}
    \xymatrix@C=2.5pc@R=1.5pc@M=3pt@L=3pt{
        T_{\X'} \ar[d]_{\Sp_{\X'}} \ar[r]^\simeq & T_\Y  \ar[d]^-{\Sp_{\Y}} \\
        \X'_0 \ar[r]_{g_0}       & \Y_0 }
\end{displaymath}
is commutative.
Since surjectivity is stable under base change by~\cite[3.5.2]{EGA1}, it follows that $f_0$ is universally closed and therefore proper.
\end{proof}

\pa{
If $(Y,D)\to(X,Z)$ is a log modification, then the formal completion $\Y=\widehat{Y/D}\to\X$ of $Y$ along $D$ is a formal log modification of $(X,Z)$.
Such a formal log modification $\Y$ of $(X,Z)$ is said to be \defi{algebraizable}, and a log modification $(Y,D)$ of $(X,Z)$ such that $\Y\to\X$ is isomorphic to $\widehat{Y/D}\to\X$ is called an \defi{algebraization} of $\Y$.
By Grothendieck's formal GAGA theorem \cite[5.1.4]{EGA3.1}, a log modification $(Y,D)\to(X,Z)$ is uniquely determined by the formal log modification $\widehat{Y/D}\to\X$ it algebraizes, and if $\Y\cong\widehat{Y/D}$ and $\Y'\cong\widehat{Y'/D'}$ are two formal log modifications that are algebraizable then $\Hom_\X(\Y',\Y)\cong\Hom_X(Y',Y)$.
If $\Y$ is a formal log modification of $(X,Z)$ algebraized by the log modification $(Y,D)$, since both the properties of being regular and of having normal crossings are local properties and, by excellence, can be checked on completed local rings, then $\Y$ is a formal log resolution of $(X,Z)$ if and only if $(Y,D)$ is a log resolution of $(X,Z)$.
In the following proposition we will prove that every formal log resolution if algebraized by a log resolution.
Since the normal crossing condition on the exceptional divisor is not needed, we give a slightly more general statement.
}

\begin{prop}\label{proposition_algebraization_resolutions}
Let $\Y$ be a formal log modification of $(X,Z)$ and assume that $\Y$ is regular.
Then $\Y$ is algebraizable.
If moreover $\Y$ is a formal log resolution, then it is algebraizable by a log resolution of $(X,Z)$.
\end{prop}

\begin{proof}
Let $f\colon\Y\to\X$ be a formal log modification of $(X,Z)$ and assume that $\Y$ is regular.
The map $f$ factors through a morphism $g:\Y\to\widetilde{\X}$.
It follows from Lemma~\ref{lemma_properness_formal_modifications} that $g$ makes $\Y$ into a proper, adic formal $\widetilde{\X}$-scheme.
Since $\widetilde\X$ is normal, $g$ is an isomorphism outside of the inverse image of a finite set of closed points of $\widetilde\X$.
Let $\mathscr U$ be an open and affine formal subscheme of $\widetilde \X$ such that there is exactly one point $x$ in $\mathscr U$ such that $g|_{g^{-1}(\mathscr U)}$ is an isomorphism outside of $g^{-1}(x)$, and denote by $E_1,\ldots,E_r$ the irreducible components of $g^{-1}(x)$.
Since $g^{-1}(\mathscr U)\subset\Y$ is regular, each $E_i$ is a Cartier divisor on $g^{-1}(\mathscr U)$ (the theory of Cartier divisors is developed over any ringed space, see for example \cite[\S21]{EGA4.4}). 
The intersection matrix $(E_i\cdot E_j)_{1\leq i,j\leq r}$ is negative definite because the whole of $g^{-1}(x)$ gets contracted to $x$ by $g$, so by the elementary (albeit long) linear algebra computation in \cite[page 138, $(ii)$]{Lipman69} we can find integers $a_i\leq0$ such that if we set $E=\sum_i a_iE_i$ we have $E\cdot E_i<0$ for every $i=1,\ldots, r$.
Consider the invertible sheaf $\calL=\OO_{g^{-1}(\mathscr U)}(E)\subset\OO_{g^{-1}(\mathscr U)}$ on ${g^{-1}(\mathscr U)}$ associated with $E$, and denote by $\calL_0$ the base change of $\calL$ to ${g^{-1}(\mathscr U)}\times_{\X}Z$.
Let $y$ be a point of $\widetilde \X_0$. 
If $y\neq x$ then $g_0^{-1}(y)$ is a point and $\calL_0|_{g_0^{-1}(y)}$ is therefore ample.
On the other hand, if $y=x$ then $\calL_0|_{g_0^{-1}(y)}$ is ample by Kleiman's criterion \cite[\S III.1]{Kleiman66} because of the inequalities $E\cdot E_i<0$.
Since $g_0$ is proper, by~\cite[4.7.1]{EGA3.1} this implies that the invertible sheaf $\calL_0$ is relatively ample with respect to $g|_{g^{-1}(\mathscr U)}$, and therefore it is ample since $\mathscr U$ is affine.
Since $\mathscr U$ is algebraized by an open subscheme $U$ of $\widetilde{\Bl_{Z}X}$, Grothendieck's existence theorem~\cite[5.4.5]{EGA3.1} guarantees that $g^{-1}(\mathscr U)$ is algebraized by a proper $U$-scheme.
Since those algebraizations are unique, we can glue them and so we deduce that $\Y$ is algebraized by a $k$-scheme $Y$, endowed with a proper morphism $g:Y\to\widetilde{\Bl_{Z}X}$.
Set $D=Y\times_X Z$; then $D$ is Cartier in $Y$ by the universal property of ${\Bl_{Z}X}$.
Since $f\colon\Y\to\X$, hence the morphism $\Y\to\widetilde{\X}$, induces an isomorphism at the level of normalized spaces, by Theorem~\ref{thm_functoriality} it is an isomorphism modulo admissible blowups, therefore its algebraization $g$ induces an isomorphism outside of $D$, and so $(Y,D)$ is a log modification of $(X,Z)$ algebraizing $\Y$.
\end{proof}

\pa{If $f\colon\Y\to\X$ and $f'\colon\Y'\to\X$ are two formal log modifications of $(X,Z)$, we say that $\Y'$ \defi{dominates} $\Y$ if there is a morphism of formal schemes $g\colon\Y'\to \Y$ such that $f\circ g=f'$; we denote this by $\Y'\geq \Y$. 
Note that if such a morphism $g$ exists, then it is unique.
This follows from the fact that $f$ is uniquely determined by the fact that it induces an isomorphism of normalized spaces: to prove this we can assume that $\X$ and $\Y$ are both affine, and conclude by observing that the image of an element of $\calO_\X(\X)$ in $\calO_\Y(\Y)$ only depends on its image in $\calO_{T_\Y}(T_\Y)$ because the natural map $\calO_\Y(\Y)\to\calO_{T_\Y}(T_\Y)$ is injective.
Two formal log modifications  $\Y'$ and $\Y$ are \defi{isomorphic} if $\Y \geq \Y'\geq \Y$, i.e. if there is an isomorphism $\Y' \cong \Y$ commuting with the morphisms to $\X$.
The domination relation is a filtered partial order on the set of isomorphism classes of formal log modifications of $X$. 
By the universal properties of blowup and normalization, this partially ordered set has $\widetilde{\X}$ as unique smallest element.
}

\pa{
If $\Y$ is a formal log modification of $(X,Z)$, we denote by $\Div_{X,Z}(\Y)$ the finite non-empty subset of $\Txz$ consisting of the $\I$-orbits of the divisorial valuations associated with the components of $\Y\times_\X Z$. 
If $\Y$ is algebraized by a log modification $(Y,D)$, we will also denote $\Div_{X,Z}(\Y)$ by $\Div_{X,Z}(Y)$.
We write $\Div_{X,Z}$ for the union of the sets $\Div_{X,Z}(\Y)$, for $\Y$ ranging over all the formal log modifications of $(X,Z)$; it is the set of the $\I$-orbits of the divisorial valuations on $X$ whose centers lie in $Z$. 
We call the elements of $\Div_{X,Z}$ the \defi{divisorial points} of $\Txz$.
We call the finite set $\Div_{X,Z}\big(\widetilde{\X}\big)$ of divisorial points of $\Txz$ the \defi{analytic boundary} of $\Txz$, and we denote it by $\boundaryZ$.
Since any formal log modification $\Y$ of $(X,Z)$ dominates $\widetilde{\X}$, we always have $\boundaryZ\subset\Div_{X,Z}(\Y)$.
}

\begin{ex}
If $X=\mathbb A^2_\C$ and $Z=\{0\}$, so that $\Txz$ is the valuative tree as in Example~\ref{valtree}, then $\boundaryZ$ consists of one point, corresponding to the exceptional divisor of the blow-up of the plane at the origin.
This is the $\I$-orbit of the order of vanishing at the origin of the complex plane, that is what Favre and Jonsson call the \defi{multiplicity valuation}.
\end{ex}

\begin{lem}\label{L: divisorial valuation specialization}
Let $\Y$ be a formal log modification of $(X,Z)$, and let $x$ be a closed point of $\Y\times_\X Z$. 
Then:
\begin{enumerate}
	\item $\Div_{X,Z}(\Y)$ is the inverse image via the specialization morphism $\Sp_\Y:\Txz\to \Y$ of the set of generic points of the irreducible components of $\Y\times_\X Z$;
	\item the open subset $\Sp_\Y^{-1}(x)$ of $\Txz$ can be given the structure of a pseudo-affinoid space.
\end{enumerate}
\end{lem}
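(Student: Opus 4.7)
The plan is to prove $(i)$ by analyzing the semivaluations centered at a generic point of a component of $\Y\times_\X Z$ using the local DVR at such a point, and to prove $(ii)$ by applying~\ref{lemma fiber reduction} together with the structure theorem~\ref{thm_structure_arboretum}.

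For $(i)$, the inclusion $\Div_{X,Z}(\Y)\subseteq \Sp_\Y^{-1}(\{\eta_D\})$ is immediate from the definition of specialization and the invariance of the center under the $\I$-action. For the reverse inclusion, let $v\in\Txz$ with $\Sp_\Y(v)=\eta_D$ for some irreducible component $D$ of $\Y\times_\X Z$. Since $\Y$ is normal and $D$ is Cartier, the local ring $R:=\OO_{\Y,\eta_D}$ is a discrete valuation ring; fix a uniformizer $\pi$. The restriction of $v$ to $R$ satisfies $|u(v)|=1$ for every unit $u\in R$, because the center equals the maximal ideal of $R$. If $v(\pi)<\infty$, then $v|_R$ is a positive scalar multiple of the $\pi$-adic valuation $v_D$, so $v\in\I\cdot v_D$ and hence $v\in\Div_{X,Z}(\Y)$. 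Otherwise $v(\pi)=\infty$; but then $v$ kills a local generator of the largest ideal of definition of $\Y$ at $\eta_D$, so $v$ factors through $\Y_0$ and hence lies in $\Y_0^\beth$, contradicting $v\in\Y^*$.

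For $(ii)$, set $A:=\widehat{\OO}_{\Y,x}$ with its $\mathfrak{m}$-adic topology, and choose $t\in A$ a local equation for $D$ at $x$. Then $A$ is naturally a special $k[[t]]$-algebra. By~\ref{lemma fiber reduction} applied to the closed immersion $\{x\}\hookrightarrow\Y$, we have a canonical isomorphism $\Sp_\Y^{-1}(x)\cong \Spf(A)^\beth$ inside $\Y^\beth$. Applied to $\Y_0\hookrightarrow\Y$, the same result identifies $\Sp_\Y^{-1}(x)\cap\Y_0^\beth$ with $\Spf(A/\sqrt{(t)}A)^\beth$, and this subspace of $\Spf(A)^\beth$ coincides with the zero locus $V(t)$: indeed, $|g(v)|=0$ for all $g\in\sqrt{(t)}$ is equivalent to $|t(v)|=0$. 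Thus $\Sp_\Y^{-1}(x)\cap\Y^*=\Spf(A)^\beth\setminus V(t)$, and passing to the quotient by $\I$ yields $\Sp_\Y^{-1}(x)\cong T_{\Spf(A)}\setminus V(t)$ inside $\T_\Y$. Applying Theorem~\ref{thm_structure_arboretum}, this object is isomorphic in $\calC$ to $\forg(\Spf(A)_t^\beth)$, hence pseudo-affinoid by definition.

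The principal technical step is the identification in $(ii)$ of the punctured part $\Sp_\Y^{-1}(x)\cap\Y^*$ with $\Spf(A)^\beth\setminus V(t)$; this uses that $\Y_0$ is locally cut out near $x$ by $\sqrt{(t)}$, which in turn relies on $D=\Y\times_\X Z$ being Cartier.
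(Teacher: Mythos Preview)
Your proof is correct and follows essentially the same approach as the paper's. For $(i)$ you use the DVR at $\eta_D$ exactly as the paper does, though you spell out the degenerate case $v(\pi)=\infty$ that the paper leaves implicit; for $(ii)$ you invoke \ref{lemma fiber reduction} and then \ref{thm_structure_arboretum} to identify $\Sp_\Y^{-1}(x)$ with $\Spf(\widehat{\OO}_{\Y,x})_t^\beth$, which is precisely the paper's argument with the intermediate identification $\Sp_\Y^{-1}(x)\cap\Y_0^\beth=V(t)$ made explicit.
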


\begin{proof}
If $\eta$ is the generic point of an irreducible component of $\Y\times_\X Z$, then the associated divisorial point specializes to $\eta$. 
Moreover, being a discrete valuation ring, $\OO_{\Y,\eta}$ is the only valuation ring dominating $\OO_{\Y,\eta}$, which means that there is only one point of $\Txz$ specializing to $\eta$, proving $(i)$.
To show $(ii)$, set $\mathscr U=\Spf\big(\widehat{\OO_{\Y,x}}\big)$. 
By~\ref{lemma fiber reduction}, the inverse image of $x$ in $\Y^{\beth}$ via the specialization morphism is isomorphic to $\mathscr U^{\beth}$. 
Let $f_x$ be a local equation for $\Y\times_\X Z$ at $x$; we then have $\Sp_\Y^{-1}(x)\cong \big(\mathscr U^\beth\setminus V(f_x)\big)/\I \cong {\mathscr U}_{f_x}^\beth$, and the latter is pseudo-affinoid.
\end{proof}

\pa{
We define a family $\calW$ of subsets of $\Txz$ as follows. 
Denote by $D=\widetilde{\Bl_{Z}X}\times_{X}Z$ the exceptional divisor in $\widetilde{\Bl_{Z}X}$.
A nonempty subset of $\Txz$ belongs to $\calW$ if and only if it is of the form $\Sp_{\widetilde{\X}}^{-1}(D\cap U)$, for some affine open $U$ of $\widetilde{\Bl_{Z}X}$ such that $D\cap U$ is a principal divisor of $U$.
}

\begin{lem}
The family $\calW$ is an atlas of $\Txz$.
\end{lem}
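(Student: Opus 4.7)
The plan is to verify two things: that each element of $\calW$ is an affinoid domain of $\Txz$, and that the family $\calW$ forms a $G$-cover of $\Txz$.

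For the first point, take $W = \Sp_{\widetilde{\X}}^{-1}(D\cap U) \in \calW$ with $U$ an affine open of $\widetilde{\Bl_{Z}X}$ on which $D\cap U$ is principal, say cut out by $t \in \calO_U(U)$. I would form the formal completion $\calU = \widehat{U/(D\cap U)}$, which is simultaneously an affine special formal $k$-scheme and, because formal completion commutes with restriction to open subsets, an open formal subscheme of $\widetilde{\X}$ corresponding to the open $D\cap U \subset D$. The identification from \ref{lemma fiber reduction}, after passing to normalized spaces by quotienting by $\I$, then gives $T_{\calU} \cong W$. The principality of $D\cap U$ lets me write $\calO_{\calU}(\calU)$ as the $t$-adic completion of the finitely generated $k$-algebra $\calO_U(U)$, and a routine argument (by lifting generators of $\calO_U(U)$ to a polynomial presentation and completing) shows this completion is topologically of finite type over $k[[t]]$. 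Hence $\calU_t$ is an affine formal $k[[t]]$-scheme of finite type and $T_{\calU} \cong \forg(\calU_t^\beth)$ is affinoid with respect to $t$.

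For the $G$-cover property, I need first to produce enough elements of $\calW$. The key geometric input is that $D = \widetilde{\Bl_{Z}X}\times_X Z$ is a Cartier divisor of $\widetilde{\Bl_{Z}X}$: indeed, its defining ideal sheaf $\mathcal I_Z\cdot \calO_{\widetilde{\Bl_{Z}X}}$ is the pullback under normalization of the invertible ideal sheaf $\calO_{\Bl_{Z}X}(-E)$ of the exceptional divisor $E$ of the blowup $\Bl_Z X \to X$, and the pullback of an invertible sheaf is invertible. Consequently I can cover $\widetilde{\Bl_{Z}X}$ by affine opens $U_\alpha$ on which $D\cap U_\alpha$ is principal. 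Writing $\calU_\alpha = \widehat{U_\alpha/(D\cap U_\alpha)}$, each $\calU_\alpha$ is admissible (its algebra has no $t$-torsion since $\widetilde{\Bl_{Z}X}$ is normal hence reduced and $D$ contains no component), so by Proposition~\ref{sp_surjective} the specialization map is surjective and the $W_\alpha := \Sp_{\widetilde{\X}}^{-1}(D\cap U_\alpha) \in \calW$ cover $\Txz$ set-theoretically.

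To promote this to a $G$-cover, I note that each $W_\alpha = T_{\calU_\alpha}$ corresponds to an open formal subscheme of $\widetilde{\X}$, and since $\{\calU_\alpha\}$ is an open cover of $\widetilde{\X}$ the family $\{\calU_\alpha^\ast\}$ is an analytic (in particular $G$-)cover of $\widetilde{\X}^\ast$. By the very definition of the quotient $G$-topology on $\Txz$ via $\pi\colon\widetilde{\X}^\ast\to \Txz$, this is exactly what is required for $\{W_\alpha\}$ to be a $G$-cover, and therefore $\calW \supset \{W_\alpha\}$ is a $G$-cover by affinoid domains. The main obstacles in carrying this out are the verification of Cartierness of $D$ and the identification of the $t$-adic completion of a finitely generated $k$-algebra with a topologically finitely generated $k[[t]]$-algebra; both are essentially algebraic and not analytic in nature.
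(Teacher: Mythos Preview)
Your proof is correct and follows essentially the same approach as the paper: identify each $W$ with $T_{\calU}$ for $\calU$ the formal completion of $U$ along $D\cap U$, note that $\calU$ is affine of finite type over $k[[t]]$ via the local equation $t$ for $D$, and use that $D$ is Cartier in $\widetilde{\Bl_Z X}$ to cover $\Txz$. You are considerably more explicit than the paper, which simply asserts Cartierness and the covering property in one line. One small remark: your appeal to Proposition~\ref{sp_surjective} is harmless but unnecessary, since the specialization map $\Sp_{\widetilde{\X}}$ already lands in $\widetilde{\X}_0 = D_{red}$ by definition, so the $W_\alpha$ cover $\Txz$ as soon as the $D\cap U_\alpha$ cover $D$.
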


\begin{proof}
Let $W=\Sp_{\widetilde{\X}}^{-1}(D\cap U)$ be an element of $\calW$ corresponding to some affine open $U$ of $\widetilde{\Bl_{Z}X}$. 
Then $W=T_\calU$, where $\calU$ is the formal completion of $U$ along $D\cap U$. 
The formal scheme $\calU$ is affine since $U$ is affine, and it can be seen as a formal scheme $\calU_t$ of finite type over $k[[t]]$, where the $k[[t]]$-structure is defined by sending $t$ to an equation for $D\cap U$.
Therefore $T_\calU\cong\forg\big(\calU_t^\beth\big)$ is affinoid.
Moreover, since $\Sp_{\widetilde{\X}}^{-1}(D)=\Txz$ and $D$ is Cartier in $\widetilde{\X}$, the elements of $\calW$ cover $\Txz$ and therefore $\calW$ is an atlas of $\Txz$.
\end{proof}

\pa{\label{facts_canonical_atlas}
We call $\calW$ the \defi{canonical atlas} of $\Txz$. 
The reason this is relevant is the following. 
Let $V\subset \Txz$ be a $k((t))$-analytic space that is a union of elements of $\calW$.
Then $V$ can be written as a finite union of elements of $\calW$ since $D$ is quasi-compact, and the family of analytic subspaces $\calW|_V=\{W\in\calW \,\,|\,\,W\subset V\}$ is a distinguished formal atlas of $V$ (in the sense of \cite[\S1]{BosLut85}) by strict affinoid domains.
This means in particular that a formal model of $V$ can be reconstructed by gluing the affine formal $k[[t]]$-schemes of finite type $\Spf\big(\OOO_{W}(W)\big)$, for $W$ in $\calW|_V$.
Moreover, in our situation we can forget the $k[[t]]$-structures and glue all the affine special formal $k$-schemes $\Spf\big(\OOO_{W}(W)\big)$, retrieving the special formal $k$-scheme $\widetilde{\X}$. 
A similar idea will be used to construct formal log modifications in Theorem~\ref{vertex_sets}.
Note that the union of the Shilov boundary points of elements of $\calW$ is $\boundaryZ$ (see~\cite[\S2.4]{Ber90} for the definition of the Shilov boundary of an affinoid space).
In particular, if $V\subset \Txz$ is a $k((t))$-analytic space that is a union of elements of $\calW$, then $V\cap\boundaryZ$ is the analytic boundary of $V$ in the sense of~\cite[\S3.1]{Ber90}.
In the following we will often implicitly chose a structure of $k((t))$-analytic curve for the elements of $\calW$.
}

\begin{lem}\label{fibers_are_connected_components}
If $\Y$ is a formal log modification of $(X,Z)$, then the set of the connected components of $\Txz\setminus \Div_{X,Z}(\Y)$ coincides with the family
$$
\{\Sp_\Y^{-1}(x)\;|\;x\in \Y\times_\X Z\text{ closed point}\}.
$$
\end{lem}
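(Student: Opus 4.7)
I would proceed in three steps: establish a set-theoretic partition of $\Txz\setminus\Div_{X,Z}(\Y)$ using the specialization map, verify that each fiber is clopen in this complement, and finally show that each fiber is connected.

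For the partition, Lemma~\ref{L: divisorial valuation specialization}(i) identifies $\Div_{X,Z}(\Y)$ with the preimage under $\Sp_\Y$ of the set of generic points of the irreducible components of $\Y\times_\X Z$. Since $\Y$ is two-dimensional (being a modification of the surface $X$) and $\Y\times_\X Z$ is a Cartier divisor of $\Y$, this divisor has dimension one, so its non-generic points are precisely its closed points. Combined with the surjectivity of $\Sp_\Y$ (Proposition~\ref{sp_surjective}, which applies because any normal formal log modification is admissible), this yields the disjoint set-theoretic decomposition
$$
\Txz\setminus \Div_{X,Z}(\Y)=\bigsqcup_{x}\Sp_\Y^{-1}(x),
$$
where $x$ ranges over the closed points of $\Y\times_\X Z$.

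Next, I would show that each fiber $\Sp_\Y^{-1}(x)$ is clopen in $\Txz\setminus \Div_{X,Z}(\Y)$. Openness is immediate from the anticontinuity of $\Sp_\Y$ applied to the closed subset $\{x\}$ of $\Y$. Closedness then follows formally, since the complement of $\Sp_\Y^{-1}(x)$ inside $\Txz\setminus \Div_{X,Z}(\Y)$ is the union of the remaining fibers, each of which is open by the same argument.

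The main obstacle is the connectedness of each fiber. By Lemma~\ref{L: divisorial valuation specialization}(ii), $\Sp_\Y^{-1}(x)$ is isomorphic to the pseudo-affinoid space ${\mathscr U}_{f_x}^\beth$, where $\mathscr U=\Spf(A)$, $A=\widehat{\OO_{\Y,x}}$ is a two-dimensional complete local normal Noetherian domain (thanks to the normality of $\Y$), and $f_x$ is a local equation for $\Y\times_\X Z$ at $x$. Using the Cohen structure theorem, $A$ is module-finite over $k[[f_x,g]]$ for a suitable $g$ completing $f_x$ to a system of parameters, so $\Spf(A)$ can be viewed as a flat formal $k[[f_x]]$-scheme of finite type, whose special fiber $\Spec(A/f_x A)$ is a local, hence connected, scheme. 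Connectedness of the generic fiber $\mathscr U_{f_x}^\beth$ then follows from the classical correspondence between connected components of special and generic fibers for flat formal schemes of finite type over a complete discrete valuation ring. Concretely, one can exhaust $\mathscr U_{f_x}^\beth$ by the strictly affinoid domains $W_\varepsilon$ of~\ref{example_affine_beth_space}: each $W_\varepsilon$ has an affinoid algebra which is a domain (being a suitable localization-completion of the analytically irreducible domain $A$), hence $W_\varepsilon$ is connected, and the increasing union of connected sets $\{W_\varepsilon\}_\varepsilon$ is therefore connected.
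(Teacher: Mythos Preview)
Your first two steps (the partition via Lemma~\ref{L: divisorial valuation specialization}(i) and the clopen argument via anticontinuity) are correct and match the paper exactly. The gap is in your connectedness argument.

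First, $\Spf(A)$ is \emph{not} of finite type over $k[[f_x]]$: module-finiteness of $A$ over $k[[f_x,g]]$ only exhibits $A$ as a \emph{special} $k[[f_x]]$-algebra (the formal variable $g$ cannot be eliminated), so the ``classical correspondence'' you invoke for finite-type formal schemes does not apply as stated. Second, your fallback claim that each $\calW_\varepsilon$ is a domain is not justified by calling it a ``localization-completion of the analytically irreducible domain $A$'': completions of domains need not be domains, and while normality of $A$ does force $\calW_\varepsilon$ to be normal, a normal noetherian ring is only a finite product of normal domains --- which is precisely the connectedness statement you are trying to prove.

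Your approach is easily repaired, however. Since $A=\widehat{\OO_{\Y,x}}$ is normal, de Jong's theorem (recalled in~\ref{dejong_original}) gives $A\cong\OOO\big(\mathscr U_{f_x}^\beth\big)$. A nontrivial clopen decomposition of $\mathscr U_{f_x}^\beth$ would yield a nontrivial idempotent in $\OO\big(\mathscr U_{f_x}^\beth\big)$, hence in $\OOO\big(\mathscr U_{f_x}^\beth\big)=A$; but $A$ is local. For comparison, the paper argues differently: it chooses an element $W$ of the canonical atlas $\calW$ whose image under $\Sp_{\widetilde\X}$ contains the image of $x$, realizes $\Sp_\Y^{-1}(x)$ as a formal fiber of the $k[[t]]$-model of the affinoid curve $W$ obtained by pulling back along $\Y\to\widetilde\X$, and then cites Bosch's theorem \cite[5.9]{Bosch77} on the connectedness of such fibers for curves. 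Your repaired route is more self-contained and bypasses the canonical atlas, at the price of using de Jong's theorem instead.
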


\begin{proof}
Lemma~\ref{L: divisorial valuation specialization} implies that $\Txz\setminus \Div_{X,Z}(\Y)=\bigcup_x \Sp_\Y^{-1}(x)$, where this union, taken over the closed points of $\Y\times_\X Z$, is disjoint.
Each $\Sp_\Y^{-1}(x)$ is open by anticointinuity of $\Sp_\Y$, so it is a union of connected components of $\Txz\setminus \Div_{X,Z}(\Y)$. The fact that $\Sp_\Y^{-1}(x)$ is connected is \cite[6.1]{Bosch77} applied to any $k((t))$-analytic curve $W\in\calW$ such that $x\in\Sp_{\widetilde{\X}}(W)$.
\end{proof}

\pa{
If $C$ is a $k((t))$-analytic curve, its points can be divided into four types, according to the valuative invariants of their completed residue field (see e.g. \cite[3.3.2]{Duc}, although these ideas essentially go back to \cite{Ber90}). 
In particular a point $x$ of $C$ is said to be of \emph{type 2} if $\trdeg_{k} \widetilde{\rescompl{x}}=1$, where $\widetilde{\rescompl{x}}$ denotes the residue field of ${\rescompl{x}}$.
The points of type 2 are precisely the points of infinite branching of $C$, i.e. a point $x$ of $C$ is of type 2 if and only if $C\setminus\{x\}$ has infinitely many connected components (if $C$ is regular, this is equivalent to $C\setminus\{x\}$ having at least three connected components).
We refer to \cite[\S6]{Temkin15} or \cite{BakerPayneRabinoff14} for a description of the structure of non-archimedean analytic curves.
}

\begin{lem}\label{lemma_divisorial_iff_type2}
If $x$ is a point of $\Txz$, $V$ is an analytic domain of $\Txz$ that contains $x$, and $C$ is a $k((t))$-analytic curve such that $\forg(C)\cong V$, then $x$ is a divisorial point of $\Txz$ if and only if it is a point of type 2 of $C$.
\end{lem}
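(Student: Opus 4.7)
The plan is to use Raynaud's Theorem~\ref{thm_functoriality} to translate between divisorial points of $\Txz$ (components of $\Y\times_\X Z$ for formal log modifications $\Y$) and type 2 points of local $k((t))$-analytic structures (generic points of components of special fibers of formal $k[[t]]$-models).

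First I would observe that the property of being of type 2 is intrinsic to the $\calC$-structure of $V$ at $x$, so it does not depend on the particular choice of a $k((t))$-analytic structure. Indeed, a type 2 point of a strict $k((t))$-affinoid curve is precisely a point of the Shilov boundary, which is characterized as the preimage under the reduction map of a generic point of the canonical reduction $\Spec\big(\OOO_W(W)/\OOOO_W(W)\big)$; both the sheaves $\OOO_W$ and $\OOOO_W$ and hence this reduction are part of the $\calC$-structure. Using Proposition~\ref{projection_of_affinoids}, I can thus replace $V$ by any convenient affinoid neighborhood of $x$ and use any compatible $k((t))$-analytic structure on it.

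For the forward direction, given $x\in\Div_{X,Z}(\Y)$ corresponding to an irreducible component $E$ of $\Y\times_\X Z$, I would pick a closed point $y\in E$ at which $\Y$ is regular and $E$ is the unique component through $y$ (possible since $\Y$ is normal and $E$ is Cartier, so generically regular on $E$), and an affine open neighborhood $\mathscr U=\Spf A$ of $y$ in $\Y$ on which $E$ is principal, cut out by some $f\in A$. Since $\Y\to\X$ is adic and $\X_0=Z_\mathrm{red}$, the element $f$ generates the largest ideal of definition of $A$, so $A$ is topologically of finite type over $k[[f]]$. By~\ref{lemma fiber reduction} the fiber $\Sp_\Y^{-1}(\mathscr U)\cong\mathscr U^\beth$ is a strict $k((f))$-affinoid curve containing $x$, and by~\ref{L: divisorial valuation specialization} the point $x$ is the unique point specializing to the generic point $\eta_E$ of a component of the special fiber $\Spec(A/fA)$. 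Hence $x$ is a Shilov boundary point and therefore of type 2.

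For the converse, assume $x$ is of type 2 in $C$. By the standard structure theory of strict $k((t))$-analytic curves (see, e.g., \cite[Chapter 4]{Ber90} or \cite[\S6]{Temkin10}), there exists a strict affinoid neighborhood $W\subset V$ of $x$ together with a formal $k[[t]]$-model $\mathfrak W$ of $W$ in which $x$ appears as the generic point of an irreducible component of the special fiber. Shrinking $W$ if necessary, I may assume $W$ is contained in an element $W'$ of the canonical atlas of $\Txz$, which is the normalized space of an affine formal $k[[t]]$-scheme of finite type coming from an open subscheme of $\widetilde{\Bl_Z X}$. Then Theorem~\ref{thm_functoriality} produces a morphism of admissible special formal $k$-schemes realizing $\mathfrak W$ as an admissible blowup of a formal submodel of $\widetilde\X$; using~\ref{blowup: extension} I extend this to an admissible blowup $\Y'\to\widetilde\X$, then normalize and further blow up if needed so that the pullback of $Z$ is a Cartier divisor, obtaining a formal log modification $\Y\to\X$ with $x\in\Div_{X,Z}(\Y)$. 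The main obstacle is the last step: I must check that $x$ remains as a divisorial point through normalization and further admissible blowups. This follows because admissible blowups of special formal schemes induce isomorphisms of normalized spaces by~\ref{invariance_generic_fiber}, hence preserve the set of divisorial points, and because the divisorial valuation $x$, being a component of $\mathfrak W$'s special fiber, gives rise under normalization to a unique component of the normalization's special fiber.
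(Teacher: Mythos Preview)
Your approach is genuinely different from the paper's. The paper gives a short valuation-theoretic argument: lifting $x$ to $\Xeta$, the completed residue field $\rescompl{x}$ is a valued extension of $k((t))$, so its rational rank over $k$ is at least $1$; Abhyankar's inequality gives $\rank_\Q\big(|\rescompl{x}^\times|/|k^\times|\otimes_\Z\Q\big)+\trdeg_k\widetilde{\rescompl{x}}\le 2$; hence $\trdeg_k\widetilde{\rescompl{x}}=1$ (the type~2 condition) is equivalent to the pair of equalities (rational rank $=1$, trdeg $=1$), which by a classical characterization (Vaqui\'e) is exactly divisoriality. This bypasses any construction of formal models and makes the intrinsicness of ``type~2'' transparent.

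Your forward direction is correct and your observation that the type of a point is intrinsic to the $\calC$-structure is right, though the paper's argument makes this immediate since $\widetilde{\rescompl{x}}$ depends only on $x\in\Xeta$ and the ground field $k$. Your converse, however, is not complete as written. The statement that Theorem~\ref{thm_functoriality} ``realizes $\mathfrak W$ as an admissible blowup of a formal submodel of $\widetilde\X$'' is not what the theorem gives: one only obtains, after replacing $\mathfrak W$ by an admissible blowup $\mathfrak W'$, a morphism from $\mathfrak W'$ to an open of some admissible blowup of $\widetilde\X$, and it is $\mathfrak W'$, not $\mathfrak W$, that must be glued into a global model. You then need to argue that $x$ remains a component of the special fiber through (a) the passage from $\mathfrak W$ to $\mathfrak W'$, (b) the extension of the blowup via~\ref{blowup: extension}, and (c) normalization. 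Step~(a) is fine because admissible blowups are isomorphisms over generic points of $\X_0$; step~(c) is fine because the preimage of a codimension-one point under normalization is again codimension-one; but your write-up conflates these steps and does not verify that the final object is a formal log modification (one must check that $\Y\times_\X Z$ stays Cartier, which is automatic once one works over $\widetilde\X$, but this deserves a sentence). The argument can be completed along these lines; it is simply longer and more delicate than the two-inequality computation the paper uses.
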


\begin{proof}
By abuse of notation we denote by $x$ also a point of $\Xeta$ whose image in $\Txz$ is the given point $x$.
The completed residue field $\rescompl{x}$ of $\Xeta$ at $x$ can be computed also as the completed residue field of $C$ at $x$.
Therefore, we deduce that it is a valued extension of $k((t))$ (for some non-trivial $t$-adic absolute value that we do not need to specify), and in particular 
\[
\rank_\Q|\rescompl{x}^\times|/|k^\times|\otimes_Z\Q\geq\rank_\Q|\rescompl{x}^\times|/|k((t))^\times|\otimes_Z\Q+1\geq1.
\]
Moreover, by Abhyankar's inequality (see \cite[Corollaire to 5.5]{Vaquie00}) we have 
\[
\rank_\Q|\rescompl{x}^\times|/|k^\times|\otimes_Z\Q+\trdeg_{k}\widetilde{\rescompl{x}}\leq 2.
\]
We said that $x$ is a type 2 point of $C$ if and only if $\trdeg_{k} \widetilde{\rescompl{x}}=1$, and by the two inequalities above this is equivalent to
\[
\begin{cases}
\rank_\Q|\rescompl{x}^\times|/|k^\times|\otimes_Z\Q=1 \\
\trdeg_{k}\widetilde{\rescompl{x}}=1
\end{cases}
\]
By \cite[Example 7, Proposition 10.1]{Vaquie00}, this is equivalent to $x$ being a divisorial point of $\Txz$.
\end{proof}


\pa{A \defi{vertex set} of $\Txz$ is any finite subset of $\Div_{X,Z}$ containing $\boundaryZ$.}

The following theorem is the main result of this section.

\begin{thm}\label{vertex_sets} 
Let $(X,Z)$ be as in \ref{setting_surfaces}.
Then the map $\Y\mapsto \Div_{X,Z}(\Y)$ induces an isomorphism between the following partially ordered sets:
\begin{enumerate}
 \item the set of isomorphism classes of formal log modifications of $(X,Z)$, ordered by domination;
 \item the set of vertex sets of $\Txz$, ordered by inclusion.
\end{enumerate}
\end{thm}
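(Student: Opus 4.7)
The plan is to exhibit an explicit inverse map, from vertex sets to formal log modifications, by working locally on the canonical atlas $\calW$ of $\Txz$, and to obtain the order-preservation in both directions along the way.

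First, the forward direction of order-preservation is immediate: given $\Y'\geq\Y$ via $g\colon\Y'\to\Y$, for each generic point $\eta'$ of a component of $\Y'\times_\X Z$ the DVR $\OO_{\Y',\eta'}$ dominates the local ring of $\Y$ at $g(\eta')$; when $g(\eta')$ is itself a generic point of a component of $\Y\times_\X Z$, the local ring $\OO_{\Y,g(\eta')}$ is also a DVR and the two rings coincide, so the corresponding divisorial valuation belongs to $\Div_{X,Z}(\Y)\cap\Div_{X,Z}(\Y')$, showing $\Div_{X,Z}(\Y)\subseteq\Div_{X,Z}(\Y')$.

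For surjectivity, given a vertex set $V$, I would construct $\Y_V$ locally. For each $W\in\calW$, which is a $k((t))$-analytic curve, the set $V_W:=V\cap W$ is a finite set of type 2 points by Lemma~\ref{lemma_divisorial_iff_type2}; it contains the Shilov boundary of $W$ because $\boundaryZ\subseteq V$ and the Shilov boundaries of the elements of $\calW$ cover $\boundaryZ$. By the correspondence between finite sets of type~2 points containing the Shilov boundary and normal admissible formal $k[[t]]$-models of a $k((t))$-analytic curve (see for instance \cite{BosLut85} and \cite[\S6]{Temkin10}), there is a unique normal formal $k[[t]]$-model $\Y_W$ of $W$ whose special fiber components reduce precisely to $V_W$. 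Forgetting the $k[[t]]$-structure, view each $\Y_W$ as a special formal $k$-scheme; the uniqueness, together with the normalized-space version of Raynaud's theorem (Theorem~\ref{thm_functoriality}), ensures that on overlaps $W_1\cap W_2$ the two restrictions $\Y_{W_i}|_{W_1\cap W_2}$ become canonically isomorphic, hence glue to a special formal $k$-scheme $\Y_V$ over $\X$. The morphism is adic because on each chart the pullback of a local equation for $Z$ generates the largest ideal of definition of $\Y_W$; this simultaneously shows that $\Y_V\times_\X Z$ is Cartier. Normality is a local property and hence clear, $T_{\Y_V}\cong\Txz$ by construction, and $\Div_{X,Z}(\Y_V)=V$.

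The local uniqueness of $\Y_W$ yields injectivity: if $\Div_{X,Z}(\Y_1)=\Div_{X,Z}(\Y_2)=V$, then on every chart $W\in\calW$ both restrictions must coincide with the unique $\Y_W$, and so $\Y_1\cong\Y_2$ globally over $\X$. For the reverse direction of order-preservation, an inclusion $V\subseteq V'$ induces on each chart a canonical contraction morphism $\Y_{V',W}\to\Y_{V,W}$ collapsing the components indexed by $(V'\setminus V)\cap W$, and these glue to a morphism $\Y_{V'}\to\Y_V$ over $\X$, witnessing $\Y_{V'}\geq\Y_V$.

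The main obstacle is the local model construction together with its canonical character: the existence and uniqueness of $\Y_W$ from a prescribed set of type 2 points must be expressed in a way independent of the choice of parameter $t$ used to endow $W$ with a $k((t))$-analytic structure, so that the models glue across charts of $\calW$ without further choices. This independence, which is what makes the gluing produce a well-defined formal $k$-scheme rather than merely a normalized space, relies crucially on the structural results of Sections~\ref{section_3.2}--\ref{section_3.4} and on the rigidity of divisorial valuations under specialization from Lemma~\ref{L: divisorial valuation specialization}(i).
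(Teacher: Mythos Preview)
Your proposal is correct and follows essentially the same approach as the paper: reduce to each chart $W\in\calW$, invoke the correspondence between finite type~2 subsets containing the Shilov boundary and normal formal $k[[t]]$-models of the curve $W$, and glue the resulting local models using the fact that the canonical formal model of an affinoid does not depend on the chosen $k((t))$-structure. The paper carries this out more explicitly by writing down a concrete family $\calV$ of affinoid subdomains (with five defining conditions) and citing \cite[6.3.15]{Duc} rather than \cite{BosLut85}/\cite{Temkin10}, and it secures the Cartier and adicness properties by factoring the morphism through $\widetilde\X$ rather than arguing directly; but the underlying mechanism is the same one you describe, including the key obstacle you single out in your final paragraph.
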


\begin{proof}
We follow the lines of \cite[6.3.15]{Duc}, but the general ideas (over an algebraically closed field) go back to \cite{BosLut85} and can be found also elsewhere, for example in \cite[\S4]{BakerPayneRabinoff14}. 
The proof will be divided in several steps.
In the first step, which is the hardest one, we rely on the proof of \cite[6.3.15]{Duc}, which is the corresponding statement for $k((t))$-analytic curves, by applying it to the elements of the canonical atlas $\calW$ of $\Txz$.
Given a vertex set of $\Div_{X,Z}$, we will first construct a special formal $k$-scheme by producing a suitable atlas of $\Txz$ and gluing the formal models of its elements.
The reader may check, using the definition of $\calW$, the facts discussed in \ref{facts_canonical_atlas}, and Lemma~\ref{fibers_are_connected_components}, that if we take $S=\boundaryZ$ then the atlas that we obtain will coincide with $\calW$.
This might be a useful example to keep in mind, noting that applying what follows in this case will yield the formal log modification $\widetilde\X$.
In the second step we will show that the formal scheme we obtained is a formal modification of $(X,Z)$.
We will then conclude the proof by showing that this association defines a bijection, and finally that it respects the given orderings.

\begin{step} {\it Construction of the formal scheme $\Y$.}
Let $S$ be a vertex set of $\Div_{X,Z}$, and let $\calV$ be the family of subsets of $\Txz$ defined as follows. 
A compact subset $V$ of $\Txz$ belongs to $\calV$ if and only if there exist a subset $S'$ of $S$ and a finite family $\{U_i\}$ of connected components of $\Txz\setminus S'$ such that the following conditions are satisfied:
\begin{enumerate}
 \item $V\subset W$ for some element $W$ of the canonical atlas $\calW$ of $\Txz$;
 \item $V=\Txz\setminus\coprod U_i$;
 \item $V\cap S=S'$;
 \item for every $x\in S'\setminus \boundaryZ$, there exists at least one index $i$ such that $x$ belongs to the topological boundary $\partial U_i\coloneqq \overline{U_i}\setminus U_i$ of $U_i$;
 \item every connected component of $\Txz$ that does not meet $S'$ is one of the $U_i$.
\end{enumerate}

\noindent
Observe that the empty set is an element of $\calV$, since it can be obtained by taking for $S'$ the empty set and for the family $\{U_i\}$ the set of connected components of $\Txz$.
To be able to use the elements of $\calV$ as building blocks for the formal scheme $\Y$, we will now prove that $\calV$ is closed under intersection.
Indeed, if $V_1$ and $V_2$ are elements of $V$ corresponding respectively to families $\{U_{1,i}\}$ and $\{U_{2,j}\}$ of subspaces of $\Txz$, then $V_3=V_1\cap V_2$ is the element of $\calV$ corresponding to the subset $S_3=S\cap V_3$ of $S$ and the family of those connected components of $\Txz\setminus S_3$ that can be written as unions of sets of the form $U_{1,i}$ or $U_{2,j}$.
The only part which is non-trivial to verify is the fact that these data satisfy the condition (v) above.
For this, assume that $U$ is a connected component of $\Txz$ which contains no point of $S_3$.
Then $U$ can not be entirely contained in $V_3$, or otherwise it would be contained in $V_1$ as well, contradicting condition (v) for $V_1$.
Therefore if $U$ intersects $V_3$ non-trivially there exists a point $x$ contained both in $U$ and in the topological boundary $\partial V_3=V_3\setminus\mathrm{Int}(V_3)$ of $V_3$.
But then $x$ would also belong to the topological boundary of one of the first two $V_i$, hence to $S_i$, and so as it belongs to $V_3$ it would be an element of $S_3$ as well.
Since this is not possible, $U$ does not intersect $V_3$, which proves (v) because $U$ is also a connected component of $\Txz\setminus S_3$.
Now let $W$ be a connected element of $\calW$, seen as a $k((t))$-analytic space, and consider the family $\calV|_W=\{V\cap W \,\,|\,\,V\in\calV\}$ of subspaces of $W$, seen as $k((t))$-analytic subspaces themselves.
Oberve that, if $V$ is an element of $\calV$ associated as above with a family $\{U_i\}$ of subsets of $\Txz$, then the element $V'=V\cap W$ of $\calV|_W$ satisfies itself the conditions (i--v) above with respect to the ambient space $W$ and the family of subsets of $W$ that are connected components of subsets of the form $U_i\cap W$, where in condition (v) $\boundaryZ$ is replaced by $\boundaryZ\cap W$.
Then, since as we observed in \ref{facts_canonical_atlas} the Shilov boundary of $W$ coincides with $\boundaryZ\cap W$, we can apply \cite[6.3.15.2]{Duc} (whose hypotheses (a--d) now follow directly from our (i--v) above) to $W$ and to the family $\calV|_W$, deducing that $\calV_W$ is a strict formal affinoid atlas of $W$, and moreover the associated vertex set (that is by definition the union of the Shilov boundaries of the elements of $\calV_W$) is $S\cap W$.
The associated formal $k[[t]]$-scheme $\Y_W$ is therefore a formal model of $W$ with vertex set $S\cap W$.
Now, observe that the canonical model of an affinoid domain $V$ of $\Txz$, being $\Spf\big(\OOO_{\Txz}(V)\big)$, does not depend on the choice of a $k((t))$-analytic structure on $V$.
This guarantees that we can glue all the $\Y_W$, seen as affine special formal $k$-schemes, along their intersections, obtaining a special formal $k$-scheme $\Y$.
\end{step}

\begin{step} {\it $\Y$ is a formal log modification of $(X,Z)$.}
We defined the formal scheme $\Y$ by gluing affine special formal $k$-schemes of the form $\Spf\big(\OOO_{\Txz}(V)\big)$, for $V$ ranging among the elements of $\calV$.
By \cite[2.1]{MartinKappen15} (as recalled in~\ref{dejong_original}) each $\OOO_{\Txz}(V)$ is integrally closed in its ring of fractions, and therefore $\Y$ is normal.
For each $W\in\calW$, the inclusions of $k((t))$-analytic spaces $V\to W$, for $V\in\calV|_W$, induce morphisms of special $k$-algebras $\OOO_{\Txz}(W)\to\OOO_{\Txz}(V)$, and therefore a morphism of special formal $k$-schemes $\Y_W\to \widetilde \X$.
These morphisms glue to an adic morphism $\Y\to\widetilde \X$, so we obtain an adic morphism $f\colon\Y\to\X$, and $\Y\times_\X Z$ is Cartier in $\Y$ by the universal property of the blowup. 
Moreover, since for every $W$ in the covering $\calW$ the morphism $\Y_W\to \widetilde \X$ induces an isomorphism at the level of normalized spaces $T_{\Y_W}\cong W$, the morphism $f$ induces an isomorphism $T_\Y\cong\Txz$.
Therefore, $\Y$ is a formal log modification of $(X,Z)$.
\end{step}

\begin{step} {\it Bijectivity of the correspondence.}
It follows from our construction in the first step that each element $V$ of $\calV$ is an affinoid domain of $\Txz$ and coincides with the inverse image under the specialization morphism $\Sp_\Y$ of an affine open subset of $\Y$ in which $\Y\times_\X Z$ is principal.
It follows from \cite[2.4.4]{Ber90}, applied after choosing a $k((t))$-analytic structure on $V$, that $V\cap S$, being the Shilov boundary of $V$, coincides with $V\cap\Div_{X,Z}(\Y)$.
Therefore $S$, being the union of the Shilov boundaries of the elements of $\calV$, coincides with $\Div_{X,Z}(\Y)$.
This shows that the map $\Y\mapsto \Div_{X,Z}(\Y)$ is surjective.
To prove its injectivity, we need to show that a formal log modification of $(X,Z)$ is determined by its divisorial set. 
This can be done locally, again using Ducros's results, as follows. 
Assume that $\Y'$ is another formal log modification of $(X,Z)$ such that $\Div_{X,Z}(\Y')=S$.
Let $W$ be the element of the canonical atlas $\calW$ associated with an open affine subspace $U$ of $\widetilde{\X}$. 
Then by~\cite[6.3.15]{Duc} $\calV|_W$ is the unique formal atlas on $W$ whose vertex set is $S\cap W$, therefore the $k[[t]]$-subspace $\tau^{-1}(U)$ of $\Y'$, where we denote by $\tau$ the composition of the canonical map $\Y'\to\widetilde{\X}$ with $\Sp_{\Y'}$, is isomorphic to the open $\Y_W$ of $\Y$, and hence $\Y'$ is isomorphic to $\Y$.
%
%
\end{step}

\begin{step} {\it Functoriality.}
It is clear that if $\Y$ and $\Y'$ are two formal log modifications of $(X,Z)$ such that $\Y'$ dominates $\Y$, then $\Div_{X,Z}(\Y)\subset \Div_{X,Z}(\Y')$.
To show that the bijective correspondence that we have constructed respects the partial orders it is then enough to note the following.
Let $S_1\subset S_2$ be finite nonempty subsets of $\Div_{X,Z}$, and let $\Y_1$ and $\Y_2$ be the corresponding formal models, defined using formal atlases $\mathscr V_1$ and $\mathscr V_2$. 
Then from the definition of the atlases $\mathscr V_i$ it follows that we can cover $\Txz$ by $V_{1,1},\ldots,V_{1,r}\in\mathscr V_1$ and also by $V_{2,1},\ldots,V_{2,s}\in\mathscr V_2$ in such a way that each $V_{2,i}$ is a subspace of some $V_{1,i}$, and each $V_{1,i}$ is covered by the $V_{2,i}$'s that it contains. 
These inclusions give a morphism $\Y_2\to\Y_1$ commuting with the two morphisms $\Y_i\to\X$, hence a morphism of formal log modifications.
\end{step}

 This completes the proof of Theorem~\ref{vertex_sets}.
\end{proof}

\Pa{Remarks}{
If $X$ has only rational singularities or if $k$ is an algebraic closure of the field $\mathbb F_p$ for some prime number $p$, then Theorem~\ref{vertex_sets} can also be proved using resolution of singularities to find a suitable log modification and then contractibility results \cite[2.3, 2.9]{Artin62} to contract all unnecessary divisors, and every formal log modification of $(X,Z)$ is algebraizable. 
In general not all of the formal log modifications given by Theorem~\ref{vertex_sets} are algebraizable, but the contractibility criterion of Grauert-Artin \cite{Artin70} guarantees that they can always be algebraized in the category of algebraic spaces over $k$.
Moreover, since Artin proved that a smooth algebraic space in dimension 2 is a scheme, we retrieve Proposition~\ref{proposition_algebraization_resolutions}.
}


\section{Discs and annuli}
\label{section_discs}\label{section_4.2}

In this section we will study one-dimensional open discs and open annuli in normalized spaces. 
The main result, Proposition~\ref{prop_discs}, explains in which sense those discs and annuli are determined by their canonical reduction.

\pa{
We say that a $k((t))$-analytic space $X$ is \defi{pseudo-affinoid} if it is the Berkovich space associated with an affine special formal $k[[t]]$-scheme. 
When this is the case and moreover $X$ is reduced, \cite[7.4.2]{deJ95} (recalled in~\ref{dejong_original}) tells us that $X\cong\X_t^\beth$, where $\X_t=\Spf\big({\OOO_X(X)}\big)$ is called the \emph{canonical formal model} of $X$, and moreover $\X_t$ is integrally closed in its generic fiber. 
The reduced affine special formal $k$-scheme $\big((\X_t)_s\big)_{\mathrm{red}}$ associated with the special fiber $(\X_t)_s=\X_t\otimes_{k[[t]]} k$ of $\X_t$ will then be called \emph{canonical reduction} of $X$, and will be denoted by $X_0$.
\\
We say that a pseudo-affinoid $k((t))$-analytic space $X$ is \defi{distinguished} if the special fiber $(\X_t)_s$ of its canonical formal model is already reduced, i.e. if it coincides with $X_0$.
We have shown in \ref{distinguished_affinoid_equivalence} that for strictly affinoid $k((t))$-analytic spaces this definition coincides with the classical one.
}

\pa{
We say that a normalized $k$-space $Y$ is \emph{pseudo-affinoid} (respectively, \emph{distinguished pseudo-affinoid}) if it is of the form $\forg(X)$, with $X$ a pseudo-affinoid (resp. a distinguished pseudo-affinoid) $k((t))$-analytic space.
This coincides with the definition in~\ref{definition_pseudo-affinoid}. 
Whenever $Y$ is reduced, the affine special formal $k$-scheme $\Y=\Spf\big({\OOO_Y(Y)}\big)$ is called the \emph{canonical formal model} of $Y$. 
Note that this is an abuse of notation since $\Y$ is not a formal model of the normalized $k$-space $Y$.
\\
We define the \emph{canonical reduction} of $Y$ as the closed formal subscheme $Y_0$ of $\Y$ defined by the ideal $I=\bigcap\sqrt{(f)}$, where the intersection is taken over all elements $f$ of $\OOOO_Y(Y)$ that do not vanish on $Y$. 
Being an intersection of radical ideals, $I$ is radical itself, so $Y_0$ is a reduced special formal $k$- scheme.
This definition is consistent with the previous one, since $Y_0$ is isomorphic to the canonical reduction $X_0$ of $X$, and therefore the canonical reduction of a pseudo affinoid $k((t))$-analytic space only depends on its normalized space structure.
To prove this, we need to show that $I=\sqrt{(t)}$. 
Clearly $I\subset\sqrt{(t)}$ since $t$ does not vanish on $Y$. 
By Theorem~\ref{thm_structure_arboretum} we have $Y=\forg(X)=T_\X\setminus V(t)$, where $\X_t$ is the canonical $k[[t]]$-formal model of $X$, as usual $\X$ is the underlying special formal $k$-scheme, and by abuse of notation we denoted by $t$ the image of $t$ in $\OO_\X(\X)$. 
Therefore, if $f$ does not vanish on $Y$ we must have $V(f)\subset V(t)$, and so $\sqrt{(t)}\subset\sqrt{(f)}$, which implies that $\sqrt{(t)}\subset I$.
Moreover, remark that $Y$ is distinguished if and only if the ideal $I$ defined above is a principal ideal.
}

\pa{
A $k((t))$-analytic space is called an \defi{open $k((t))$-disc}, or simply a \emph{disc}, if it is isomorphic to $\Spf\big(k[[t]][[T]]\big)^{\beth}_t$.
\\
Equivalently, a disc is a $k((t))$-analytic space isomorphic to the subspace of $\mathbb A^{1,\mathrm{an}}_{k((t))}$ defined by the inequality $|T|<1$, where $\mathbb A^1_{k((t))}=\Spec(k((t))[T])$.
}

\pa{
A $k((t))$-analytic space is called an \defi{open $k((t))$-annulus of modulus $n$}, or simply an \emph{annulus of modulus $n$}, if it is of the form $A_n\coloneqq \Spf\big(k[[t]][[T_1,T_2]]/(T_1T_2-t^n)\big)_t^{\beth}$, for some $n>0$. 
\\
Equivalently, an annulus of modulus $n$ is a $k((t))$-analytic space isomorphic to the subspace of $\mathbb A^{1,\mathrm{an}}_{k((t))}$ defined by the inequality $|t^n|<|T_1|<1$. 
\\
We define a \defi{standard annulus} as an annulus of modulus one. 
Remark that an annulus is standard if and only if it has no $k((t))$-point.
}

\pa{\label{modulus_intrinsic}
The modulus of an annulus $X$ is well defined, and depends only on the algebra $\OOO_X(X)$, hence only on the normalized space $\forg(X)$.
Indeed, if $X$ has modulus $n$ then $\OOO_X(X)$ is the completed local ring of a $k$-surface at a du Val singularity of type $A_{n-1}$, and there is exacly one formal isomorphism class of surfaces with singularity of type $A_{n-1}$ (see for example \cite[\S2]{Artin77}).
}

\pa{
Discs and annuli are distinguished pseudo-affinoid $k((t))$-analytic spaces. 
Indeed, both are reduced and the canonical formal model of a disc is the affine formal $k[[t]]$-scheme $\Spf\big(k[[t]][[T]]\big)_t$, whose special fiber is $\Spf\big(k[[T]]\big)$; while the canonical formal model of an annulus of modulus $n$ is the affine formal $k[[t]]$-scheme $\Spf\big(k[[t]][[T_1,T_2]]/(T_1T_2-t^n)\big)_t$, whose special fiber is $\Spf\big(k[[T_1,T_2]]/(T_1T_2)\big)$.
}

\begin{rem}
The canonical model of an annulus is regular if and only if the annulus is standard. 
Indeed, the maximal ideal of $k[[t,T_1,T_2]]/(T_1T_2-t^n)$ is $\frakM=(t,T_1,T_2)$, hence $\frakM^2=(t^2,T_1^2,T_2^2,tT_1,tT_2,t^n)$ so the $k$-vector space $\frakM/\frakM^2$ has dimension 2, with basis $\{T_1,T_2\}$, if and only if $n=1$.
\end{rem}

It is clear that any two $k((t))$-discs are always isomorphic as $k((t))$-analytic spaces, and that two $k((t))$-annuli are isomorphic if and only if they have the same modulus.
In our setting we need something stronger, which will be the content of the next proposition and of the corollary following it.

\begin{prop}\label{prop_discs}
Let $X$ be a distinguished pseudo-affinoid $k((t))$-analytic space, and denote by $X_0$ its canonical reduction. 
Then:
\begin{enumerate}
\item if $X_0\cong\Spf\big(k[[T]]\big)$, then $X$ is a $k((t))$-disc;
\item if $X_0\cong\Spf\big(k[[T_1,T_2]]/(T_1T_2)\big)$ and $X$ is irreducible, then $X$ is a $k((t))$-annulus.
\end{enumerate}
\end{prop}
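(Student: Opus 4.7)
Let $A := \OOO_X(X)$, so that $\mathfrak{X}_t = \Spf(A)$ is the canonical formal model of $X$. By de Jong's theorem (recalled in \ref{dejong_original}), $A$ is integrally closed in its generic fiber, so $t$ is a non-zerodivisor in $A$. The hypothesis on $X_0$ gives $A/tA \cong k[[X]]$ in case (i), resp.\ $k[[X,Y]]/(XY)$ in case (ii), both $1$-dimensional local reduced; consequently $A$ is a $2$-dimensional complete noetherian local ring with maximal ideal $\mathfrak{m} = (t,\tilde X)$ (resp.\ $(t,\tilde X,\tilde Y)$), for chosen lifts $\tilde X$ (resp.\ $\tilde X,\tilde Y$) of the variables. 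Completeness of $A$ in the $\mathfrak{m}$-adic topology follows because the ideal of definition $J$ of $A$ is $\mathfrak{m}$-primary ($A/J$ being a finitely generated $k$-algebra with a unique closed point, hence Artinian). In case (i), the domain property of $A/tA$ together with $t$ being a non-zerodivisor implies $A$ is a domain; in case (ii) the same conclusion follows from the irreducibility assumption on $X$. The continuous $k[[t]]$-algebra homomorphism $\psi\colon k[[t,X]] \to A$ (resp.\ $\psi\colon k[[t,X,Y]] \to A$) sending the variables to the chosen lifts is then surjective by iterated lifting: given $a \in A$, lift $\bar a \in A/tA$ to $P_0$, write $a = \psi(P_0) + t a_1$, iterate on $a_1$, and sum the resulting series $\sum t^n \psi(P_n)$, which converges $\mathfrak{m}$-adically.

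In case (i), $\ker \psi$ is a prime of height $\dim k[[t,X]] - \dim A = 0$ in the UFD $k[[t,X]]$, hence zero, so $A \cong k[[t,X]]$ and $X$ is a $k((t))$-disc. In case (ii), $\ker \psi$ has height $1$ in the UFD $k[[t,X,Y]]$, so it is principal: $\ker \psi = (f)$ for some prime $f$. Reducing modulo $t$ gives $(\bar f) = (XY) \subset k[[X,Y]]$, so $\bar f = u \cdot XY$ for some unit $u \in k[[X,Y]]^\times$; lifting $u$ to a unit of $k[[t,X,Y]]$ and dividing, we may assume $f = XY + th$ with $h \in k[[t,X,Y]]$.

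The core step is to bring $f$ to the normal form $XY - t^n$ by $k[[t]]$-automorphisms of $k[[t,X,Y]]$. Using the direct sum decomposition $k[[t,X,Y]] = k[[t]] \oplus X k[[t,X]] \oplus Y k[[t,Y]] \oplus XY k[[t,X,Y]]$, write $h = h_a(t) + X h_b(t,X) + Y h_c(t,Y) + XY h_d(t,X,Y)$, and consider the substitution $X \mapsto X - t(h_c(t,Y) + X h_d(t,X,Y))$, $Y \mapsto Y - t h_b(t,X)$. A Jacobian computation shows this is a $k[[t]]$-automorphism of $k[[t,X,Y]]$, and a direct calculation gives $f \mapsto XY + t h_a(t) + t^2 r$ for some $r \in k[[t,X,Y]]$. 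Iterating, the $X$- and $Y$-dependent part of $h$ at the $n$-th step lies in $(t^n)$, so the sequence of automorphisms converges $(t)$-adically (hence $\mathfrak{m}$-adically) to an automorphism $\Phi$ with $\Phi(f) = XY + H(t)$ for some $H \in t k[[t]]$. If $H = 0$ then $A \cong k[[t]] \otimes_k k[[X,Y]]/(XY)$, which is not a domain, contradicting the irreducibility of $X$; hence $H(t) = -t^n w(t)$ with $n \ge 1$ and $w \in k[[t]]^\times$, and a final rescaling $Y \mapsto w(t)^{-1} Y$ yields $A \cong k[[t,X,Y]]/(XY - t^n)$, proving that $X$ is a $k((t))$-annulus of modulus $n$. \emph{The main obstacle} is the convergence analysis of the iterative Weierstrass-type substitutions: one must verify that each step strictly increases the $(t)$-adic order of the $X$- and $Y$-dependent residual terms, so that the infinite composition of substitutions defines a genuine automorphism of $k[[t,X,Y]]$.
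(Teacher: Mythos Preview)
Your argument is correct. A couple of small points deserve a word: the claim that $A$ is a domain in case~(ii) follows once you observe that $A$ is \emph{reduced} (from $A/tA$ reduced together with $t$ a non-zerodivisor, exactly as in your domain argument for~(i)), so that the minimal primes of $A$ correspond to the irreducible components of $X$; and the completeness of $A$ in the $\mathfrak{m}$-adic topology is indeed forced, as you say, by $J$ being $\mathfrak{m}$-primary. The iterative substitution in~(ii) checks out: at each stage the automorphism is the identity modulo a strictly higher power of $t$, so the infinite composition converges.

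Your approach and the paper's are the same at heart---both amount to classifying flat $k[[t]]$-deformations of $\Spf k[[X]]$ and of the formal node---but the execution differs. The paper invokes two black boxes: rigidity of smooth affine formal schemes (so the only deformation of $\Spf k[[X]]$ is the trivial one) and the miniversal deformation $XY=t$ of the node, which immediately gives $A\cong k[[t,X,Y]]/(XY-T(t))$; then one only needs to factor $T(t)=u\cdot t^n$ and rescale. You instead reprove these statements by hand: surjectivity of $\psi$ plus a height count for~(i), and for~(ii) an explicit Weierstrass-type normal-form procedure that is precisely the computation underlying the versality of $XY=t$. Your route is more elementary and self-contained, at the cost of the iterative bookkeeping; the paper's route is shorter and situates the result in standard deformation theory.
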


\begin{proof}
Part $(i)$ follows easily from the uniqueness of deformations of smooth affine formal schemes, see \cite{PerezRodriguez08}. 
Indeed, up to isomorphism there is only one affine and flat special formal $k[[t]]$-scheme whose special fiber is $\Spf\big(k[[T]]\big)$, so the canonical formal model of $X$ is isomorphic to $\Spf\big(k[[t]][[T]]\big)$, hence $X$ is a $k((t))$-disc.
To prove $(ii)$ we make use of the fact that the {miniversal} deformation of the formal node $\Spf\big(k[[T_1,T_2]]/(T_1T_2)\big)$ is $\Spf\big(k[[t,T_1,T_2]]/(T_1T_2-t)\big)$, which is \cite[14.0.1]{Hartshorne10}.
This means that if $\X=\Spf(A)$ is an affine and flat special formal $k[[t']]$-scheme whose special fiber is isomorphic to $\Spf\big(k[[T_1,T_2]]/(T_1T_2)\big)$, then there is a local $k$-algebra morphism $\varphi\colon k[[t]]\to k[[t']]$ such that $\X \cong \Spf\big(k[[t,T_1,T_2]]/(T_1T_2-t)\big)\otimes_{k[[t]]}k[[t']]$.
The morphism $\varphi$ is determined by a power series $\varphi(t)=F(t')\in k[[t']]$ such that $F(0)=0$, so we have $\X\cong\Spf\big(k[[t',T_1,T_2]]/(T_1T_2-F(t'))\big)$.
Moreover, since $X$ is irreducible then $F(t')$ cannot be zero.
The power series $F(t')$ can be written as $u(t')^n$ for some unit $u$ of $k[[t']]$, hence by further sending $T_1$ to $uT_1$ we obtain an isomorphism $\X\cong\Spf\big(k[[t',T_1,T_2]]/(T_1T_2-(t')^n)\big)$. 
By applying this to the canonical $k[[t]]$-formal model of $X$, we deduce that $X$ is a $k((t))$-annulus.
\end{proof}

\begin{cor}\label{cor_discs}
Let $X$ be a $k((t))$-disc or a $k((t))$-annulus and let $Y$ be a distinguished pseudo-affinoid $k((t))$-analytic space such that  $\forg(X)\cong \forg(Y)$. 
Then $X$ and $Y$ are isomorphic as $k((t))$-analytic spaces.
\end{cor}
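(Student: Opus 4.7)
The plan is to deduce the corollary from Proposition~\ref{prop_discs} by showing that all data needed on the $Y$ side, namely its canonical reduction, its irreducibility in the annulus case, and (in the annulus case) its modulus, are already determined by the normalized space $\forg(Y)$, and therefore coincide with the corresponding data for $X$.

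First I would record the fact, already embedded in the definition of canonical reduction in Section~\ref{section_discs}, that for a principal pseudo-affinoid $k((t))$-analytic space the canonical reduction is an invariant of the associated normalized space. More precisely, the canonical reduction of $Y$ can be described as $\Spf\bigl(\OOO_Y(Y)/I\bigr)$, where $I = \bigcap \sqrt{(f)}$ runs over $f \in \OOOO_Y(Y)$ not vanishing on $Y$, and this description only uses data preserved by $\forg$. Combined with $\forg(X) \cong \forg(Y)$, this gives $X_0 \cong Y_0$. If $X$ is a $k((t))$-disc, then $Y_0 \cong X_0 \cong \Spf(k[[X]])$, so Proposition~\ref{prop_discs}(i) applied to $Y$ forces $Y$ to be a $k((t))$-disc; since any two $k((t))$-discs are $k((t))$-analytically isomorphic, $X \cong Y$ as $k((t))$-analytic spaces.

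For the annulus case I would additionally need that $Y$ is irreducible. But a principal pseudo-affinoid space $W \cong \Spf\bigl(\OOO_W(W)\bigr)^{\beth}_t$ is irreducible if and only if $\OOO_W(W)$ has a unique minimal prime, a property of the $k$-algebra $\OOO_W(W)$ alone, which depends only on $\forg(W)$. Hence $Y$ inherits irreducibility from $X$, and since $Y_0 \cong \Spf\bigl(k[[X,Y]]/(XY)\bigr)$, Proposition~\ref{prop_discs}(ii) yields that $Y$ is a $k((t))$-annulus. Its modulus is intrinsic to $\forg(Y)$ by~\ref{modulus_intrinsic}, and therefore equals the modulus of $X$. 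Since two annuli of the same modulus are $k((t))$-analytically isomorphic, the result follows.

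The only mildly subtle point is the verification that both the canonical reduction and irreducibility of a principal pseudo-affinoid $k((t))$-analytic space pass through the forgetful functor to $\calC$; once this is granted, everything else reduces to a direct invocation of Proposition~\ref{prop_discs} together with the intrinsic characterization of the modulus of an annulus in~\ref{modulus_intrinsic}.
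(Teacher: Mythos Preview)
Your proof is correct and follows essentially the same route as the paper's: both arguments reduce the claim to Proposition~\ref{prop_discs} via the observation that the canonical reduction of a principal pseudo-affinoid depends only on the underlying object of $\calC$, and then invoke~\ref{modulus_intrinsic} to match the moduli in the annulus case. Your version is slightly more careful in that you explicitly verify the irreducibility hypothesis needed for Proposition~\ref{prop_discs}(ii), which the paper leaves implicit.
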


\begin{proof}
This follows from Proposition~\ref{prop_discs} since the special fiber of the canonical formal model of a distinguished pseudo-affinoid $k((t))$-analytic space does not depend on the chosen distinguished pseudo-affinoid $k((t))$-analytic structure. 
For annuli, note that the modulus of $Y$ is the same as the modulus of $X$ by~\ref{modulus_intrinsic}.
\end{proof}

\pa{
The last result allows us to define more intrinsically discs and annuli in normalized spaces: we say that a distinguished pseudo-affinoid analytic domain $V$ of a normalized $k$-space $T$ is a \defi{disc} (respectively an \defi{annulus of modulus $n$}) if there exists a $k((t))$-analytic space $X$ such that $V\cong\forg(X)$ and $X$ is a disc (resp. an annulus of modulus $n$). 
Corollary~\ref{cor_discs} tells us that this property is independent of the choice of a distinguished pseudo-affinoid structure on $V$.
}


\section{Formal fibers}
\label{section_formal_fibers}\label{section_4.3}

In this section we move to the study of the fibers of the specialization morphism. 
For normal surfaces we will get in Proposition~\ref{formal_fibers} very explicit results involving discs and annuli, analogous to \cite[2.2 and 2.3]{BosLut85}. 
For simplicity, from now on we assume that $k$ is algebraically closed.

\pa{Let $\X$ be a special formal $k$-scheme and let $x$ be a point of $\X$. 
We define the \defi{formal fiber} of $x$ as the inverse image $\calF_x=\Sp_\X^{-1}(x)$ of $x$ in $\T$ under the specialization morphism. 
It is a subspace of $\T$, open if $x$ is closed in $\X$.
}

\pa{ 
Let $X$ be a normal scheme of finite type over $k$, let $Z$ be a divisor of $X$, and let $\X=\widehat{X/Z}$ the formal completion of $X$ along $Z$.
Then the argument of Lemma~\ref{L: divisorial valuation specialization} tells us that, if $\eta$ is a generic point of an irreducible component of $Z$, its formal fiber $\calF_\eta$ is a single point of $\T$, precisely the point corresponding to the $\I$-orbit of divisorial valuations associated with the component $\overline{\{\eta\}}$.
If $x$ is a closed point of $\X$, its formal fiber $\calF_x$ can be given the structure of a pseudo-affinoid space.
}

\begin{lem}\label{L: regular formal fiber}
Let $\X$ be a normal special formal $k$-scheme of dimension $n$ and let $x$ be a closed point of $\X$ such that there exists an ideal of definition of $\X$ that is principal at $x$.
Then $\X$ is regular at $x$ if and only if $\OOO_\T(\calF_x)\cong k[[X_1,\ldots,X_n]]$.
\end{lem}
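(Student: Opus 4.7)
The plan is to identify $\OOO_\T(\calF_x)$ with the completed local ring $\widehat{\OO_{\X,x}}$ via Corollary~\ref{dejong_normalized}, after which the lemma reduces to Cohen's structure theorem combined with the faithful flatness of completion.

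First I would set $A = \widehat{\OO_{\X,x}}$ and $\mathscr U = \Spf(A)$, which is an affine special formal $k$-scheme. Applying~\ref{lemma fiber reduction} to the closed subscheme $\{x\}\subset\X$, the canonical morphism $\mathscr U \to \X$ identifies $\mathscr U^\beth$ with $\Sp_\X^{-1}(x)\subset\X^\beth$. Since $x$ is a closed point and $k$ is algebraically closed, the reduction $\mathscr U_0$ is just $\Spec(k)$, so $\mathscr U^*$ corresponds to $\Sp_\X^{-1}(x)\cap\Xeta$. Passing to the quotient by $\I$, this gives a canonical identification of $\calF_x$ with $T_{\mathscr U}$ in the category $\calC$, and in particular an isomorphism of $k$-algebras $\OOO_\T(\calF_x)\cong\OOO_{T_{\mathscr U}}(T_{\mathscr U})$.

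Next I would check the hypotheses needed to apply Corollary~\ref{dejong_normalized} to $\mathscr U$, namely that $A$ is an admissible normal special $k$-algebra. Normality of $A$ follows from the normality of $\X$ at $x$, together with the excellence of the local rings of $\X$, which ensures that $A$ is a normal Noetherian local ring and hence a domain. For admissibility, the hypothesis that some ideal of definition of $\X$ is principal at $x$ ensures that the largest ideal of definition of $A$ contains a nonzero element $f_x$; since $A$ is a domain, no nonzero element of $A$ is $f_x$-torsion, so $\Tor_A = 0$ and $A$ is admissible by~\ref{admissible_equivalence}. Corollary~\ref{dejong_normalized} then gives $A\cong \OOO_{T_{\mathscr U}}(T_{\mathscr U})$, and combined with the previous step this yields the key identification
$$
\OOO_\T(\calF_x)\;\cong\;\widehat{\OO_{\X,x}}.
$$

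For the final step I would translate the two sides of this isomorphism. Regularity of $\X$ at $x$ is equivalent, by the faithful flatness of completion, to regularity of $A$. Since $\X$ is normal of pure dimension $n$ and $x$ is closed, $\dim A = \dim\OO_{\X,x} = n$. Because $x$ is a closed point of a $k$-scheme over the algebraically closed field $k$, the residue field of $A$ is $k$, so Cohen's structure theorem implies that $A$ is regular of dimension $n$ if and only if $A\cong k[[X_1,\ldots,X_n]]$ as $k$-algebras. The main (modest) obstacle is the verification that $\widehat{\OO_{\X,x}}$ satisfies the admissibility and normality hypotheses of Corollary~\ref{dejong_normalized}; once the identification $\OOO_\T(\calF_x) \cong \widehat{\OO_{\X,x}}$ is in place, the rest is standard commutative algebra.
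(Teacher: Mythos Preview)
Your overall strategy---identify $\OOO_\T(\calF_x)$ with $\widehat{\OO_{\X,x}}$ via a de Jong--type result and then invoke Cohen---is exactly the paper's strategy. However, there is a genuine gap in the identification step.

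You claim that $\mathscr U^*$ corresponds to $\Sp_\X^{-1}(x)\cap\Xeta$, and hence that $\calF_x\cong T_{\mathscr U}$. This is not correct. The reduction $\mathscr U_0$ is indeed $\Spec(k)$, so $\mathscr U^*=\mathscr U^\beth\setminus\{\ast\}$, where $\ast$ is the trivial seminorm. But $\Sp_\X^{-1}(x)\cap\Xeta$ consists of those seminorms on $A$ that do not vanish on the ideal of definition of $\X$ at $x$, i.e.\ on $t$; in other words it equals $\mathscr U^\beth\setminus V(t)$. In general $V(t)\supsetneq\{\ast\}$: take $A=k[[X_1,X_2]]$ with $t=X_1$, so that $V(t)$ is an entire open disc in the $X_2$-direction. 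Thus $\calF_x$ is a strict open subspace of $T_{\mathscr U}$, and Corollary~\ref{dejong_normalized} applied to $\mathscr U$ computes $\OOO$ on the wrong space. (Equivalently: the morphism $\mathscr U\to\X$ is not adic unless $\sqrt{(t)}$ is the maximal ideal, so you cannot pass to punctured spaces functorially.)

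The paper fixes this by using the principal generator $t$ to put a $k[[t]]$-structure on $\mathscr U$ and identifying $\calF_x$ with $\big(\mathscr U^\beth\setminus V(t)\big)/\I\cong\mathscr U_t^\beth$; then de~Jong's theorem~\cite[7.3.6]{deJ95} applied to the normal $k[[t]]$-scheme $\mathscr U_t$ gives $\OOO_{\mathscr U_t^\beth}(\mathscr U_t^\beth)\cong\widehat{\OO_{\X,x}}$. So the hypothesis that an ideal of definition is principal at $x$ is used more substantially than just to produce a nonzero element: it is what allows one to view $\calF_x$ as the Berkovich generic fiber of a single affine special $k[[t]]$-scheme. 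Once you make this correction, the rest of your argument (faithfully flat descent of regularity, Cohen's theorem) goes through unchanged.
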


\begin{proof}
Consider the normal special formal $k$-scheme $\mathscr U=\Spf\big(\widehat{\OO_{\X,x}}\big)$.
By \ref{lemma fiber reduction}, the inverse image of $x$ in $\X^{\beth}$ via the specialization morphism is isomorphic to $\mathscr U^{\beth}$, so $\calF_x$ is isomorphic to $(\mathscr U^{\beth}\setminus V(t))/\I\cong\mathscr U_t^\beth$, where we have denoted by $t$ a local generator of an ideal of definition of $\X$ at $x$.
It follows that  $\OOO_\T(\calF_x)\cong \OOO_{\mathscr U_t^\beth}(\mathscr U_t^\beth)$, and since $\mathscr U_t$ is normal this is also equal to $\OO_{\mathscr U_t}(\mathscr U_t)\cong \widehat{\OO_{\X,x}}$ by \cite[7.3.6]{deJ95}.
To conclude, by Cohen's structure theorem~\cite{Cohen46} $x$ is regular in $\X$ if and only if $\widehat{\OO_{\X,x}}\cong k[[X_1,\ldots,X_n]]$.
\end{proof}

\begin{rem}
Note that in the lemma above, while the $k$-algebra $\OOO_\T(\calF_x)$ does not depend on the geometry of $\X_0$ around $x$, its largest ideal of definition, and therefore also the space $\calF_x$, strongly depends on it.
Focusing now on the case of surfaces, an example of this behavior is detailed in the following proposition, which is the analogue for normalized spaces of surfaces of a classical result of Bosch and L\"utkebohmert \cite[2.2 and 2.3]{BosLut85} (see also \cite[4.3.1]{Ber90} for a formulation in the language of Berkovich curves).
\end{rem}

\begin{prop}\label{formal_fibers}
Let $\X$ be a normal special formal $k$-scheme of dimension $2$ and let $x$ be a closed point of $\X$ such that there exists an ideal of definition of $\X$ that is principal at $x$.
Then:
\begin{enumerate}
 \item $x$ is regular both in $\X$ and in $\X_0$ if and only if its formal fiber $\calF_x$ is a disc;
 \item $x$ is regular in $\X$ and an ordinary double point in $\X_0$ if and only if $\calF_x$ is a standard annulus.
\end{enumerate}
\end{prop}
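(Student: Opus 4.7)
The plan is to reduce the statement to Proposition~\ref{prop_discs} by computing the canonical reduction of $\calF_x$ in terms of $\widehat{\OO_{\X_0,x}}$. Let $t$ be a local generator at $x$ of an ideal of definition of $\X$ and set $\mathscr U = \Spf(\widehat{\OO_{\X,x}})$; by~\ref{lemma fiber reduction} the formal fiber $\calF_x$ is identified with $(\mathscr U^{\beth}\setminus V(t))/\I \cong T_{\mathscr U}$, and is thereby endowed with the structure of the pseudo-affinoid $k((t))$-analytic space $\mathscr U_t^{\beth}$. Since $\X$ is normal at $x$, the algebra $\widehat{\OO_{\X,x}}$ is a two-dimensional normal complete local ring; then, exactly as in the proof of Lemma~\ref{L: regular formal fiber}, de Jong's theorem yields $\OOO_{\calF_x}(\calF_x) \cong \widehat{\OO_{\X,x}}$, so that the canonical formal model of $\calF_x$ is $\Spf(\widehat{\OO_{\X,x}})$. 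Its canonical reduction, as defined at the start of Section~\ref{section_4.2}, is therefore $\Spf(\widehat{\OO_{\X,x}}/\sqrt{(t)}) \cong \Spf(\widehat{\OO_{\X_0,x}})$, because $\sqrt{(t)}$ is the largest ideal of definition of $\widehat{\OO_{\X,x}}$ and thus defines $\X_0$ locally at $x$.

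For the forward implication of (i), the regularity of $\X$ at $x$ gives $\widehat{\OO_{\X,x}} \cong k[[X_1, X_2]]$ by Cohen's structure theorem, while regularity of the one-dimensional $\X_0$ at $x$ forces $\widehat{\OO_{\X_0,x}} \cong k[[X]]$. Hence $\calF_x$ has canonical reduction $\Spf(k[[X]])$, which is reduced, so $\calF_x$ is principal pseudo-affinoid and Proposition~\ref{prop_discs}(i) identifies it with a disc. For the forward implication of (ii), the ordinary double point assumption yields $\widehat{\OO_{\X_0,x}} \cong k[[X,Y]]/(XY)$, again reduced, giving canonical reduction $\Spf(k[[X,Y]]/(XY))$; the irreducibility of $\calF_x$ needed to apply Proposition~\ref{prop_discs}(ii) is guaranteed by the fact that its canonical formal model $\Spf(k[[X_1, X_2]])$ is a domain. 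That the annulus so obtained is standard follows from the regularity of its canonical formal model, by the remark following the definition of annulus in Section~\ref{section_4.2}.

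For the converse directions, if $\calF_x$ is a disc then its canonical formal model has underlying $k$-algebra $k[[t,X]] \cong k[[X_1, X_2]]$, so $\X$ is regular at $x$, while its canonical reduction $\Spf(k[[X]])$ identifies $\widehat{\OO_{\X_0,x}}$ and shows $\X_0$ is regular at $x$. If $\calF_x$ is a standard annulus its canonical formal model is $\Spf(k[[t,X,Y]]/(XY - t))$, whose underlying $k$-algebra is again isomorphic to $k[[X_1, X_2]]$ via $X_1 \mapsto X$, $X_2 \mapsto Y$ and $t \mapsto X_1 X_2$, so $\X$ is regular at $x$, while the canonical reduction $\Spf(k[[X,Y]]/(XY))$ witnesses that $x$ is an ordinary double point of $\X_0$. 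The main obstacle I anticipate is the careful identification of the canonical reduction of $\calF_x$ with $\Spf(\widehat{\OO_{\X_0,x}})$: one must verify both that $\OOOO_{\calF_x}(\calF_x)$ corresponds to the largest ideal of definition of $\widehat{\OO_{\X,x}}$ and that passage to the completion at $x$ is compatible with the formation of the reduction, so that the two descriptions of the reduction indeed agree.
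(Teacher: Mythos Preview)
Your overall strategy—identify the canonical reduction of $\calF_x$ with $\Spf(\widehat{\OO_{\X_0,x}})$ and then invoke Proposition~\ref{prop_discs}—is sound, and the converse directions are fine. However, there is a genuine gap in the forward implications: the sentence ``which is reduced, so $\calF_x$ is principal pseudo-affinoid'' does not hold up. The canonical reduction is reduced \emph{by definition}; its being reduced tells you nothing about principality. What you actually need, in the language of Section~\ref{section_4.2}, is that the ideal $I=\sqrt{(t)}\subset\widehat{\OO_{\X,x}}$ is principal (equivalently, that one can choose the parameter so that the special fiber, not just its reduction, is already reduced). This does follow from your hypotheses in both (i) and (ii), but for a different reason: since $x$ is regular in $\X$, the ring $\widehat{\OO_{\X,x}}$ is a two-dimensional regular local ring, hence a UFD, so $\sqrt{(t)}$—an intersection of height-one primes—is principal. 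Once you say this, Proposition~\ref{prop_discs} applies and your argument goes through.

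By contrast, the paper's proof bypasses Proposition~\ref{prop_discs} entirely. For (i) it observes that regularity of $\X_0$ at $x$ makes the largest ideal of definition principal at $x$, chooses $t$ to generate it, completes a regular system of parameters $(t,U)$ via Cohen's theorem to get $\widehat{\OO_{\X,x}}\cong k[[t,U]]$, and reads off $\calF_x\cong\Spf(k[[t,U]])_t^\beth$ directly. For (ii) it similarly chooses $U,V$ so that $\X_0$ is cut out by $UV$ and obtains $\calF_x\cong\Spf(k[[U,V]])_{UV}^\beth=\Spf(k[[t,U,V]]/(UV-t))_t^\beth$, exhibiting the standard annulus explicitly. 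This is more elementary (no deformation theory), while your route is more systematic and reuses the classification already established; both are valid once the principality step is repaired.
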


\begin{proof}
We endow $\calF_x$ with the pseudo-affinoid $k((t))$-analytic structure from Lemma~\ref{L: divisorial valuation specialization}.
Assume that $x$ is regular both in $\X$ and in $\X_0$. 
Then $\X_0$ is itself principal locally at $x$, 
 and so we can choose an element $t$ of $\OO_{\X,x}$ that locally defines $\X_0$.
Then the image $\bar t$ of $t$ in $\frakM/\frakM^2$ does not vanish, where we have denoted by $\frakM$ the maximal ideal of $\OO_{\X,x}$.
We then pick another element $X$ of $\frakM$ whose image in $\frakM/\frakM^2$ generates it together with $\bar t$.
Cohen Theorem gives us an isomorphism $\widehat{\OO_{\X,x}}\cong k[[t,X]]$, therefore the formal fiber $\calF_x$ is the disc $\mathcal{F}_x=\Spf\big(k[[t,X]]\big)_{t}^\beth$.
Conversely, if $\calF_x$ is a disc, isomorphic to $\Spf\big(k[[t,X]]\big)^{\beth}_t$, then $\OOO_{\Txz}(\calF_x) \cong k[[t,X]]$, so $\X$ is regular at $x$ by Lemma~\ref{L: regular formal fiber}, and $\X_0$ is locally defined by the equation $t=0$, so it is itself regular, proving $(i)$.
The proof of $(ii)$ is similar.
If $x$ is an ordinary double point of $\X_0$ then we can find elements $X$ and $Y$ of $\frakM$ whose images in $\frakM/\frakM^2$ generate it and such that $\X_0$ is defined at $x$ by the equation $XY=0$.
Then we obtain $\calF_x=\Spf\big(k[[X,Y]]\big)_{XY}^\beth=\Spf\big(k[[t]][[X,Y]]/(XY-t)\big)_t^\beth$, which is a standard annulus.
The converse implication is proved as in $(i)$, as if $\calF_x$ is a standard annulus then $\OOO_{\Txz}(\calF_x) \cong k[[t]][[X,Y]]/(XY-t) \cong k[[X,Y]]$.
\end{proof}


\section{Log essential and essential valuations}
\label{section_4.4}

In this section we make use of the previous results to characterize in terms of the structure of $\Txz$ the finite sets of divisorial points of $\Txz$ that correspond via Theorem~\ref{vertex_sets} to the log resolutions of a pair $(X,Z)$.
We will then be able to describe the divisorial points corresponding to the minimal log resolution of $(X,Z)$, and in Theorem~\ref{thm_log_essential_valuations} we will deduce a local characterization of the log essential valuations of $(X,Z)$.
Finally, we will show that similar arguments also yield a characterization of the slightly smaller class of essential valuations which has been introduced by Nash in \cite{Nash95} and studied extensively since.
This is the content of Theorem~\ref{thm_essential_valuations}.

Note that in this section we use the existence of resolution of singularities for surfaces. 
More precisely, we admit the fact that given any set $S$ of divisorial points of $\Txz$ there exists a log resolution $Y$ of $(X,Z)$ such that $S\subset \Div_{X,Z}(Y)$. 
On the other hand, we do not need to assume the existence of a minimal (log) resolution of $(X,Z)$.

\pa{Let $(X,Z)$ be as in \ref{setting_surfaces}.
We say that a vertex set $S$ of $\Txz$ is \defi{regular} if the connected components of $\Txz\setminus S$ are discs and a finite number of standard annuli.
}

Putting together Lemma~\ref{fibers_are_connected_components}, Proposition~\ref{formal_fibers} and Proposition~\ref{proposition_algebraization_resolutions} we obtain the following important result.

\begin{prop}\label{prop_resolution_iff_vertex_set}
Let $k$ be an algebraically closed field and let $(X,Z)$ be as in \ref{setting_surfaces}.
Then a formal log modification $\Y$ of $(X,Z)$ is a log resolution if and only if $\Div_{X,Z}(\Y)$ is a regular vertex set of $\Txz$.
\end{prop}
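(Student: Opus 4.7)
The plan is to reduce the statement to the pointwise criterion provided by Proposition~\ref{formal_fibers}, via the fiber/component correspondence of Lemma~\ref{fibers_are_connected_components}. The key observation is that being a log resolution is a condition that can be checked at each closed point of $\Y\times_\X Z$, and that Proposition~\ref{formal_fibers} translates precisely that local condition into a statement about formal fibers, which are the connected components of $\Txz\setminus\Div_{X,Z}(\Y)$.

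More precisely, I would first argue that, since $\Y$ is a formal log modification of a surface pair, $\Y$ is a normal two-dimensional special formal $k$-scheme and $\Y\times_\X Z$ is a Cartier divisor, so in particular any ideal of definition of $\Y$ is locally principal at closed points lying on $\Y\times_\X Z$. This puts us in the setting where Proposition~\ref{formal_fibers} applies: for every closed point $x$ of $\Y\times_\X Z$, the formal fiber $\calF_x$ is a disc if and only if $x$ is regular in both $\Y$ and $\Y_0:=\Y\times_\X Z$, and it is a standard annulus if and only if $x$ is regular in $\Y$ and an ordinary double point of $\Y_0$. Moreover, by Lemma~\ref{fibers_are_connected_components} the collection of these $\calF_x$ is exactly the set of connected components of $\Txz\setminus\Div_{X,Z}(\Y)$.

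For the forward implication, assume $\Y$ is a formal log resolution. Then $\Y$ is regular and $\Y_0$ is a divisor with normal crossings; since $k$ is algebraically closed and $\Y$ has dimension two, at any closed point of $\Y_0$ the divisor $\Y_0$ is either smooth or an ordinary double point. Proposition~\ref{formal_fibers} then ensures that every connected component of $\Txz\setminus\Div_{X,Z}(\Y)$ is a disc or a standard annulus. Finiteness of the number of annuli follows from the fact that the singular locus of a normal crossings divisor on a regular noetherian two-dimensional formal scheme consists of finitely many closed points (the pairwise intersections of the finitely many irreducible components of $\Y_0$). Hence $\Div_{X,Z}(\Y)$ is a regular vertex set.

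For the converse, suppose $\Div_{X,Z}(\Y)$ is regular. Every formal fiber $\calF_x$ of a closed point of $\Y_0$ is then either a disc or a standard annulus, so by Proposition~\ref{formal_fibers} every such $x$ is regular in $\Y$, and $\Y_0$ is either smooth or an ordinary double point at $x$. Combined with normality of $\Y$ (which gives regularity in codimension one), this yields regularity of $\Y$ at all points, and the local description of $\Y_0$ shows that it is a normal crossings divisor. Therefore $\Y$ is a formal log resolution of $(X,Z)$, and by Proposition~\ref{proposition_algebraization_resolutions} it is algebraized by a genuine log resolution. The only mildly delicate point is verifying that ``regular vertex set'' really does match ``normal crossings'' in our sense (as opposed to the good-resolution variant mentioned earlier), but this is immediate from the fact that the components of the Cartier divisor $\Y_0$ are permitted to have arbitrary multiplicities, which does not affect whether a closed point is an ordinary double point of $\Y_0$ or of its reduction. \qed
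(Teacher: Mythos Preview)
Your argument is correct and follows exactly the route the paper takes: the paper's own proof is the single sentence ``Putting together Lemma~\ref{fibers_are_connected_components}, Proposition~\ref{formal_fibers} and Proposition~\ref{proposition_algebraization_resolutions} we obtain the following important result,'' and your write-up is a faithful unpacking of precisely that combination. The only cosmetic issue is that you set $\Y_0:=\Y\times_\X Z$, whereas the paper reserves $\Y_0$ for the reduction $(\Y\times_\X Z)_{\mathrm{red}}$; since Proposition~\ref{formal_fibers} is stated in terms of the latter, your final paragraph is really checking (correctly) that regularity and normal crossings for the Cartier divisor $\Y\times_\X Z$ on a regular surface are equivalent to the corresponding conditions on its reduction.
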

%

\pa{\label{observation_fibers_are_simple}
We define the set of \defi{log essential valuations} of the pair $(X,Z)$ as the intersection $\bigcap_Y\Div_{X,Z}(Y)$, where $Y$ ranges among the log resolutions of $(X,Z)$.
In words, it is the set of divisorial points of $\Txz$ whose center on every log resolution of $(X,Z)$ is a divisor; if we assume the existence of a minimal log resolution $Y_\mathrm{min}$ of $(X,Z)$ then it coincides with the set $\Div_{X,Z}(Y_\mathrm{min})$. 
We call an open subset $U$ of $\Txz$ \defi{simple} if it is isomorphic to either a disc or a standard annulus, $U\cap\boundaryZ=\emptyset$ and the topological boundary $\partial U=\overline U\setminus U$ is contained in $\Div_{X,Z}$. 
Then a finite subset $S$ of $\Txz$ is a regular vertex set if and only if all the connected components of $\Txz\setminus S$ are simple subspaces of $\Txz$.
}

\begin{thm}\label{thm_log_essential_valuations}
Let $k$ be an algebraically closed field, let $(X,Z)$ be as in \ref{setting_surfaces} and let $v$ be an element of $\Div_{X,Z}$. 
Then $v$ is a log essential valuation of $(X,Z)$ if and only if it has no simple neighborhood in $\Txz$.
\end{thm}

\begin{proof}
Proposition~\ref{prop_resolution_iff_vertex_set} implies that a divisorial point which has no simple neighborhood in $\Txz$ is log essential.
To prove the reverse implication, assume that $U$ is a simple subspace of $\Txz$ and that $v$ is a divisorial point contained in $U$.
Since $\partial U$ is a finite set of divisorial points, there exists a log resolution $Y$ of $(X,Z)$ such that $\Div_{X,Z}(Y)$ contains $\partial U$.
Set $S=\Div_{X,Z}(Y)\setminus U$.
Then $S$ is a finite subset of $\Div_{X,Z}$, nonempty because it contains $\partial U$, so by Theorem~\ref{vertex_sets} it is of the form $\Div_{X,Z}(\Y')$ for some formal log modification $\Y'$ of $(X,Z)$.
The connected components of $\Txz\setminus \Div_{X,Z}(\Y')$ are either $U$ or connected components of $\Txz\setminus \Div_{X,Z}(Y)$, so they are all simple.
By Proposition~\ref{prop_resolution_iff_vertex_set} the formal log modification $\Y'$ can therefore be algebraized to a log resolution $Y'$ of $(X,Z)$.
Since $v$ doesn't belong to $\Div_{X,Z}(Y')$, this contradicts the fact that $v$ is log essential.
\end{proof}

\begin{rem}\label{J4_claim_boundary}
Any element of the cover discussed in Example~\ref{analytic_structure_valuative_tree} provides an example in the valuative tree $T_{\mathbb A^2_\mathbb{C},0}$ of a disc whose complement is a nondivisorial point. 
This shows that it is really necessary to impose the condition on the topological boundary in the definition of simple subspace.
\end{rem}

We will now move to studying the more classical concept of essential valuations.

\pa{
Let us assume that $k=\C$ and that $Z=X_{sing}$ is the singular locus of $X$.
Then the set of log essential valuations contains the set of essential valuations studied by Nash in~\cite{Nash95}. 
More generally, over an arbitrary algebraically closed field the set of \defi{essential valuations} of a pair $(X,Z)$ is defined as the intersection $\bigcap_Y\Div_{X,Z}(Y)$, where $Y$ ranges among the (not necessarily log) resolutions of $(X,Z)$ and $\Div_{X,Z}(Y)$ denotes the finite set of divisorial points associated to those components of the exceptional locus of $Y\to X$ which are divisors.
The sets of essential and log essential valuations differ when the minimal resolution of the pair $(X,Z)$ is not a log resolution, i.e. when its exceptional locus is not a normal crossings divisor. 
An example is given in \ref{example_nonlogessential2}.
However, essential and log essential valuations coincide for big classes of singularities, for example for rational singularities.
}

\pa{
We call an open subset $U$ of $\Txz$ \emph{elementary} if its ring of bounded analytic functions $\calO^\circ_{\Txz}(U)$ is isomorphic to $k[[t,u]]$ and its topological boundary $\partial U=\overline{U} \setminus U$ is a finite subset of $\Div_{X,Z}$.
Observe that every simple subset of $\Txz$ is elementary; a crucial difference is that elementary subsets can contain points of $\boundaryZ$.
}

We can now give a local criterion for essential valuations analogous to Theorem~\ref{thm_log_essential_valuations}.
For technical reasons we will restrict ourselves to the case of a normal surface singularity $x\in X$.
This is the case that is usually considered in the literature on the Nash problem.

\begin{thm}\label{thm_essential_valuations}
Let $k$ be an algebraically closed field, let $X$ be a normal surface over $k$, let $x$ be a point of $X$, and let $v$ be an element of $\Div_{X,x}$. 
Then $v$ is an essential valuation of $(X,x)$ if and only if it has no elementary neighborhood in $T_{X,x}$.
\end{thm}

The proof of this result is analogous to the one of Theorem~\ref{thm_log_essential_valuations}.
Before we give it, we need a proposition which combines some results similar to Lemma~\ref{fibers_are_connected_components}, Theorem~\ref{vertex_sets}, Proposition~\ref{formal_fibers} and Proposition~\ref{prop_resolution_iff_vertex_set}.

\begin{prop}\label{prop_resolution_elementary_components}
Let $k$ be an algebraically closed field, let $X$ be a normal surface over $k$, let $x$ be a point of $X$, and let $S$ be a finite nonempty subset of $\Div_{X,x}$.
Then there exist a normal special formal $k$-scheme $\Y$ and an adic morphism $\Y \to \widehat{X/x}$ inducing an isomorphism of normalized spaces such that the associated set of divisorial points $\Div_{X,x}(\Y)$ is $S$.
If moreover every connected component of $\Div_{X,x}\setminus S$ is elementary, then $\Y$ can be algebraized by a resolution $Y$ of $(X,x)$.
\end{prop}

\begin{proof}
Choose a resolution $(Y,D)$ of $(X,x)$ such that $S\subset \Div_{X,x}(Y)$.
By the contractibility criterion of Grauert-Artin \cite{Artin70} we can contract every component of $D$ which does not correspond to an element of $S$, yielding a normal algebraic spaces $\calY$ over $k$ with a proper morphism $f$ to $X$ (for a systematic treatment of algebraic spaces we refer the reader to \cite{Knutson1971}).
Indeed, the intersection matrix of the divisor that we want to contract is negative definite because the entire exceptional divisor of $Y$ can be contracted to $x$ in $X$.
By taking the formal completion of this algebraic space along $f^{-1}(x)$ we obtain the formal $k$-scheme $\Y$ that we want.
The connected components of $T_{X,x}\setminus \Div_{X,x}(\Y)$ are the inverse images through the center map of the closed points of $\Y_0$ (this can be proven as in Lemma~\ref{fibers_are_connected_components}).
Let $y$ be a closed point of $\Y_0$ and let $\mathcal I_y$ be the image of the ideal defining $\Y_0$ in $\mathcal O_{\Y,y}$.
As in Lemma~\ref{L: divisorial valuation specialization}, it follows from \ref{lemma fiber reduction} that we have a canonical isomorphism $\Sp_\Y^{-1}(y)\cong T_{\Spf\big(\widehat{\mathcal O_{\Y,y}}\big)}\setminus V(\mathcal I_y)$.
Therefore we have 
\begin{align*}
\OOO_{T_{X,x}}\big({\Sp_\Y^{-1}(y)}\big)
& \cong
\OOO_{T_{\Spf(\widehat{\mathcal O_{\Y,y}})}}\Big(T_{\Spf(\widehat{\mathcal O_{\Y,y}})}\setminus V(\mathcal I_y)\Big) 
\\
& \cong
\OOO_{T_{\Spf(\widehat{\mathcal O_{\Y,y}})}}\Big(T_{\Spf(\widehat{\mathcal O_{\Y,y}})}\Big)
\cong
\widehat{\mathcal O_{\Y,y}},
\end{align*}
where the second isomorphism follows from the extension theorem~\cite[Proposition 3.3.14]{Ber90} and the third one holds because 	$\Y$ is normal at $y$.
If $\Sp_\Y^{-1}(y)$ is elementary, it follows then from Cohen's theorem that $\Y$ is smooth at $y$.
Since this holds for every $y$, $\Y$ is non-singular, so the algebraic space $\calY$ is a non-singular, separated two-dimensional algebraic space over a field, hence it can be algebraized by a scheme (see \cite[V.4.9,10]{Knutson1971}), yielding the resolution $Y$ that we are looking for.
\end{proof}

\Pa{Remarks}{
If we are working over the field of complex numbers, we can apply Grauert contractibility criterion \cite{Grauert62} instead of Artin's and obtain $\Y$ as a complex analytic space.
Of course this is the same as analytifying the complex algebraic space given by Artin's criterion.
Observe that the surface $Y$ above is not necessarily a log resolution of $(X,x)$, as the exceptional locus of the morphism $Y\to X$ may not be a divisor with normal crossings.
}

\begin{proof}[Proof of Theorem~\ref{thm_essential_valuations}]
As before, the reasoning of Proposition~\ref{prop_resolution_elementary_components} implies that a divisorial valuation which has no elementary neighborhood in $T_{X,x}$ is essential.
To prove the reverse implication, let $U$ be an elementary subspace of $T_{X,x}$ and let $v$ be a divisorial point contained in $U$.
Let $Y$ be a log resolution of $(X,x)$ such that $\Div_{X,x}(Y)$ contains $\partial U$, and set $S=\Div_{X,x}(Y)\setminus U$.
Then $S$ is finite and nonempty, and the connected components of $\Div_{X,x}\setminus S$, being either $U$ or connected components of $T_{X,x}\setminus \Div_{X,x}(Y)$, are all elementary.
Therefore, Proposition~\ref{prop_resolution_elementary_components} tells us that there exists a resolution $Y'$ of $(X,x)$ such that $\Div_{X,x}(Y')=S$.
This proves that $v$ is not an essential valuation, since it doesn't belong to $\Div_{X,x}(Y')$.
\end{proof}

%
\begin{ex}\label{example_nonlogessential2}
Let us give an example of a surface for which the sets of essential and log essential valuations do not coincide.
Let $X$ be the hypersurface in $\C^3$ defined by the equation $f=z^2+(x^3+y^3)(y^3+x^4)$. 
Consider the projection $X\to {\mathbb C}^2$ defined by the coordinates $x$ and $y$: it is a double cover branched on the curve $C\coloneqq \{(x^3+y^3)(y^3+x^4)=0\}$.
Blow up the origin in ${\mathbb C}^2$, and let $Y$ be the surface obtained by base change and normalization: $Y$ is a double cover branched on the strict transform $C'$ of $C$ and it is smooth since $C'$ is smooth. 
The exceptional locus of the resolution $Y\to X$ is the inverse image $D\subset Y$ of the exceptional curve of the blow up. 
A standard computation shows that $D$ is irreducible and $D^2=-2$, so the resolution is minimal.
Moreover, $D$ has a simple cusp as singularity, therefore $(Y,D)$ is not a log resolution of $(X,X_{sing})$.
\end{ex}
%
%
%



\begin{rem}\label{remark_existence_resolutions}
We expect the approach used in this section to lead to a new proof of the existence of resolutions of surfaces, at least in characteristic 0, in a similar way as one can prove the semistable reduction theorem for curves using non-archimedean analytic spaces.
A proof would go roughly as follows.
The normalized space $\Txz$ can be covered by finitely many smooth affinoid $k((t))$-analytic curves, since all the points of $\Xeta$ are regular.
Then \cite[5.1.14]{Duc} applied to those $k((t))$-analytic curves gives us a vertex set $S\subset\Div_{X,Z}$ such that all connected components of $\Txz\setminus S$ are \defi{virtual discs} or \defi{virtual annuli}, i.e. $k((t))$-analytic spaces that become a $k((t))$-disc or a $k((t))$-annulus after a finite separable extension of $k((t))$.
If we could prove that all those virtual discs and annuli are actual discs and annuli, we would obtain a log resolution of $(X,Z)$, since by enlarging $S$ we can cut an annulus of modulus $n$ into $n$ standard annuli.
If the characteristic of $k$ is zero, by a special case of \cite{Ducros13} every virtual disc is a disc. 
A virtual annulus is a pseudo-affinoid $k((t))$-analytic space, and to prove that it is an annulus it would be enough, by a slight generalization of Proposition~\ref{prop_discs}, to show that it is a distinguished pseudo-affinoid.
We believe that it is always possible to enlarge $S$ further and break a given virtual annulus in discs and finitely many annuli.
\end{rem}

\bibliographystyle{alpha}
\bibliography{../biblioarboreti}

\end{document}